\numberwithin{equation}{section}
\theoremstyle{definition}\newtheorem{definition}{Definition}[section]
\newtheorem{proposition}[definition]{Proposition}
\newtheorem{lemma}[definition]{Lemma}
\newtheorem{theorem}[definition]{Theorem}
\newcommand{\M}{\operatorname{M}}
\newcommand{\C}{\mathbb{C}}
\newcommand{\F}{\mathbb{F}}
\newcommand{\cR}{\mathcal{R}}
\newcommand{\actson}{\curvearrowright}
\newcommand{\SL}{\operatorname{SL}}
\newcommand{\rL}{\mathord{\text{\rm L}}}
\newcommand{\N}{\mathbb{N}}
\newcommand{\Z}{\mathbb{Z}}
\newcommand{\cF}{\mathcal{F}}
\newcommand{\cA}{\mathcal{A}}
\newcommand{\cV}{\mathcal{V}}
\newcommand{\id}{\mathord{\operatorname{id}}}
\newcommand{\recht}{\rightarrow}
\newcommand{\cU}{\mathcal{U}}
\newcommand{\vphi}{\varphi}
\newcommand{\cW}{\mathcal{W}}
\newcommand{\R}{\mathbb{R}}
\newcommand{\eps}{\varepsilon}
\newcommand{\ovt}{\mathbin{\overline{\otimes}}}
\newcommand{\B}{\operatorname{B}}
\newcommand{\om}{\omega}
\newcommand{\cP}{\mathcal{P}}
\newcommand{\cZ}{\mathcal{Z}}
\newcommand{\Q}{\mathbb{Q}}
\newcommand{\cK}{\mathcal{K}}
\newcommand{\cH}{\mathcal{H}}
\newcommand{\cJ}{\mathcal{J}}
\newcommand{\ot}{\otimes}
\newcommand{\dis}{\displaystyle}
\newcommand{\Ad}{\operatorname{Ad}}
\newcommand{\cG}{\mathcal{G}}
\newcommand{\cM}{\mathcal{M}}
\newcommand{\dpr}{^{\prime\prime}}
\newcommand{\vphitil}{\widetilde{\vphi}}
\newcommand{\lspan}{\operatorname{span}}
\newcommand{\cN}{\mathcal{N}}
\newcommand{\cS}{\mathcal{S}}
\newcommand{\Om}{\Omega}
\newcommand{\cC}{\mathcal{C}}
\newcommand{\qtil}{\widetilde{q}}
\newcommand{\cQ}{\mathcal{Q}}
\newcommand{\cO}{\mathcal{O}}
\newcommand{\otalg}{\otimes_{\text{\rm alg}}}
\newcommand{\ubar}{\overline{u}}
\newcommand{\abar}{\overline{a}}
\newcommand{\zbar}{\overline{z}}
\newcommand{\Gammah}{\widehat{\Gamma}}
\newcommand{\Ktil}{\widetilde{K}}
\newcommand{\cT}{\mathcal{T}}
\newcommand{\Cstarred}{\operatorname{C}^*_{\text{\rm red}}}
\newcommand{\Omtil}{\widetilde{\Omega}}
\newcommand{\m}{\mathord{\text{\rm m}}}
\newcommand{\SO}{\operatorname{SO}}
\newcommand{\SU}{\operatorname{SU}}
\newcommand{\Prob}{\operatorname{Prob}}
\newcommand{\cD}{\mathcal{D}}
\newcommand{\op}{^\text{\rm op}}
\newcommand{\cb}{_\text{\rm cb}}
\newcommand{\QHreg}{\mathcal{QH}_{\text{\rm\tiny reg}}}
\newcommand{\etatil}{\widetilde{\eta}}
\newcommand{\Gammatil}{\widetilde{\Gamma}}
\newcommand{\Lambdah}{\widehat{\Lambda}}
\newcommand{\rLUC}{\operatorname{LUC}}
\newcommand{\cCs}{\mathcal{C}_{\text{\rm\tiny s}}}
\begin{document}

\begin{center}
{\boldmath\LARGE\bf Unique Cartan decomposition for II$_1$ factors\vspace{0.5ex}\\
arising from arbitrary actions of hyperbolic groups}

\bigskip

{\sc by Sorin Popa\footnote{Mathematics Department, UCLA, CA 90095-1555 (United States), popa@math.ucla.edu\\
Supported in part by NSF Grant DMS-1101718} and Stefaan Vaes\footnote{K.U.Leuven, Department of Mathematics, Leuven (Belgium), stefaan.vaes@wis.kuleuven.be \\
    Supported by ERC Starting Grant VNALG-200749, Research
    Programme G.0639.11 of the Research Foundation --
    Flanders (FWO) and K.U.Leuven BOF research grant OT/08/032.}}
\end{center}

\begin{abstract}\noindent
We prove that for any free ergodic probability measure preserving action $\Gamma \actson (X,\mu)$ of a non-elementary hyperbolic group, or a lattice in a rank one simple Lie group, the associated group measure space II$_1$ factor $\rL^\infty(X) \rtimes \Gamma$ has $\rL^\infty(X)$ as its unique Cartan subalgebra, up to unitary conjugacy.
\end{abstract}

\section{Introduction and main results}

A {\it Cartan subalgebra} $A$ in a (separable)
II$_1$ factor $M$ is a maximal abelian $*$-subalgebra $A\subset M$
with normalizer $\cN_M(A)=\{u\in \cU(A) \mid
uAu^*=A\}$ generating $M$.
Its presence amounts to realizing $M$ as a generalized (twisted)
version of the group measure space construction, for a measure preserving ergodic countable equivalence relation $\cR$ on a probability
space $X$ and a 2-cocycle $v$ for $\cR$. Showing uniqueness (up to conjugacy by an automorphism)
of Cartan subalgebras  is important, because the
classification of factors $M$ satisfying this property reduces to the classification of the
associated pairs $(\cR, v)$ (\cite{FM75}). In particular, the classification of group measure space factors $M=\rL^\infty(X)\rtimes \Gamma$
with unique Cartan subalgebras, reduces to the classification up to orbit equivalence of the corresponding
free ergodic probability measure preserving (pmp) actions $\Gamma \curvearrowright X$.

It has been known since \cite{CFW81} that
any two Cartan subalgebras of the hyperfinite II$_1$ factor $R$ are conjugated by an automorphism, and thus
any 2-cocycle of any free ergodic pmp action
of an amenable group vanishes (untwists) and any two ergodic actions of any
two amenable groups are orbit equivalent. While in the the nonamenable case examples of
group measure space factors with two distinct Cartan subalgebras
were already constructed in \cite{CJ81}, uniqueness results started to emerge in \cite{Po01}, where it was shown
that all Cartan subalgebras $A\subset M$ that satisfy a certain \emph{rigidity} property in a factor of the form  $\rL^\infty(X)\rtimes \F_n$,
with $\F_n$ being the free group on $2\leq n \leq \infty$ generators, is unitarily conjugate to $\rL^\infty(X)$.
This led to the conjecture that such a property could hold without any condition on the Cartan subalgebra.
Further supporting evidence came with the work in \cite{OP07}, where it was shown that group measure space factors
arising from \emph{profinite} actions of $\F_n$ have unique Cartan decomposition.

We solved this conjecture in \cite{PV11}, where we actually found a large class of groups $\Gamma$,
containing $\F_n$, with the property that the II$_1$ factor $\rL^\infty(X) \rtimes \Gamma$ associated with an \emph{arbitrary} free ergodic pmp action $\Gamma \actson (X,\mu)$ has $\rL^\infty(X)$ as its unique Cartan subalgebra up to unitary conjugacy, i.e.\ $\Gamma$ is \emph{$\cC$-rigid}, in the sense of \cite[Definition 1.4]{PV11}. More precisely, we
showed in \cite[Theorem 1.2]{PV11} that all weakly amenable groups that admit a proper $1$-cocycle into a nonamenable representation are $\cC$-rigid.
To prove this result, we first showed in \cite[Theorem 5.1]{PV11}
(by only using the weak amenability of $\Gamma$!)  that the normalizer of any Cartan subalgebra $A \subset \rL^\infty(X) \rtimes \Gamma$ has a special almost invariance property, that can be viewed as a relative version (w.r.t.\ $\rL^\infty(X)$) of the notion of weak compactness in \cite[Definition 3.1]{OP07}. The second part of the proof consisted in applying to this relative weak compactness the malleable deformation
associated in \cite{Si10} with a $1$-cocycle into an orthogonal representation of $\Gamma$. As such, we derived that if $A$ is not unitarily conjugate to $\rL^\infty(X)$ then its normalizer generates an amenable subalgebra (thus contradicting the regularity of $A$).

The degree of generality of the results in \cite{PV11} was thus limited by the assumption that
$\Gamma$ admits a proper $1$-cocycle into a nonamenable orthogonal representation $\eta$ on $K_\R$, i.e., of a proper map $c : \Gamma \recht K_\R$ satisfying $c(gh) = c(g) + \eta_g c(h)$ for all $g, h \in \Gamma$.

In the particular case of profinite actions, this type of limitation had already been circumvented
in \cite{CS11} under the weaker assumption that the group $\Gamma$ belongs to their class $\QHreg$, requiring that $\Gamma$ has an orthogonal representation $\eta$ on $K_\R$ that is weakly contained in the left regular representation and that merely admits a
proper map $c : \Gamma \recht K_\R$ coarsely satisfying the $1$-cocycle relation, i.e.\ $\sup_{k \in \Gamma} \|\eta_g c(k) - c(gkh)\| < \infty$, $\forall g,h \in \Gamma$. Thus,   it is shown in \cite{CS11} that for all profinite free ergodic pmp actions of all weakly amenable, nonamenable groups in the class $\QHreg$, the crossed product has a unique Cartan subalgebra up to unitary conjugacy. This result was then extended in \cite{CSU11} to cover as well products of weakly amenable groups in $\QHreg$.

As we will later explain, the class of exact groups in $\QHreg$ coincides with the class of bi-exact groups in the sense of \cite{Oz03} (see Definition \ref{def.wa-bi-exact} and Proposition \ref{prop.biexact} below). In this paper, which should be viewed as a follow-up to \cite{PV11}, we show that weakly amenable, nonamenable, bi-exact groups are in fact $\cC$-rigid, i.e., {\it all}  their group measure space
factors have unique Cartan subalgebra. To prove this result, we first use the relative weak compactness property (which was obtained in \cite[Theorem 5.1]{PV11} from the weak amenability assumption) and then apply the bi-exactness property, by using an argument
inspired by the proof of \cite[Theorem 15.1.5]{BO08}. More precisely, we obtain the following general result.

\begin{theorem}\label{thm.main-hyp}
Let $\Gamma$ be a weakly amenable, nonamenable, bi-exact group, or let $\Gamma$ be a direct product of $1 \leq n < \infty$ such groups. If $\Gamma \actson (X,\mu)$ is an arbitrary free ergodic pmp action, then $\rL^\infty(X)$ is the unique Cartan subalgebra of $\rL^\infty(X) \rtimes \Gamma$, up to unitary conjugacy.

In particular, all of the following groups are $\cC$-rigid.
\begin{enumerate}
\item non-elementary hyperbolic groups,
\item lattices in a connected noncompact rank one simple Lie group with finite center,
\item limit groups in the sense of Sela,
\item direct products of $1 \leq n < \infty$ groups as in 1, 2 and 3.
\end{enumerate}
\end{theorem}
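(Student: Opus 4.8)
The plan is to run the two-step strategy of \cite{PV11}, keeping the first step unchanged and replacing the second by a boundary/bi-exactness argument. Write $A = \rL^\infty(X)$ and $M = A \rtimes \Gamma$, fix an arbitrary Cartan subalgebra $B \subset M$, and observe that it suffices to prove $B \prec_M A$ in the sense of Popa's intertwining-by-bimodules: since $A$ and $B$ are both Cartan, in particular regular and maximal abelian, a standard argument then upgrades $B \prec_M A$ to the existence of $u \in \cU(M)$ with $u B u^* = A$. The ``in particular'' assertions are then immediate, because non-elementary hyperbolic groups, lattices in rank-one simple Lie groups with finite center, and Sela's limit groups are all weakly amenable, non-amenable and bi-exact (recorded around Definition~\ref{def.wa-bi-exact} and in Proposition~\ref{prop.biexact}), and finite direct products of such groups are exactly what the hypothesis allows.

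To obtain $B \prec_M A$ I would establish the dichotomy that is the real content of the theorem: if $Q \subset pMp$ is any von Neumann subalgebra that is amenable relative to $A$ inside $M$, then either $Q \prec_M A$, or $\cN_{pMp}(Q)\dpr$ is amenable relative to $A$. Granting this and applying it to $Q = B$ (with $p=1$): by weak amenability of $\Gamma$, \cite[Theorem 5.1]{PV11} shows that $B \subset M$ together with its normalizer is weakly compact relative to $A$, and in particular $B$ is amenable relative to $A$; if the first alternative failed, we would get that $\cN_M(B)\dpr = M$ is amenable relative to $A$, hence --- since $A$ is itself amenable --- that $M$, and therefore $\Gamma$, is amenable, contradicting the hypothesis. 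So $B \prec_M A$.

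It remains to prove the dichotomy, and this is where I expect the main work to lie. First, weak amenability of $\Gamma$ together with \cite[Theorem 5.1]{PV11} upgrades ``$Q$ amenable relative to $A$'' to ``$Q \subset M$ weakly compact relative to $A$ with respect to $\cN_{pMp}(Q)$'': concretely, a net of unit vectors in the $p$-compression of $\rL^2$ of Jones' basic construction $\langle M, e_A\rangle$ that is asymptotically $Q$-central, asymptotically $\Ad$-invariant under $\cN_{pMp}(Q)$, and asymptotically concentrated on the relative diagonal. Then, adapting the proof of \cite[Theorem 15.1.5]{BO08}: realize $M$ on $\rL^2(A) \otimes \ell^2(\Gamma)$, let $\rho$ be the commuting right-translation representation of $\Gamma$, and consider the $\rC^*$-algebra $\cC$ generated by $M$, $\rho(\Gamma)$ and $A \otimes c_0(\Gamma)$; bi-exactness of $\Gamma$ (topological amenability of its action on the small-at-infinity boundary, together with exactness) ensures that the quotient of $\cC$ by its canonical ideal of ``$A$-relatively compact'' operators carries an $M$-bimodule structure weakly contained in the coarse $M$-$M$-bimodule. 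If $Q \not\prec_M A$, the weak-compactness net must escape to infinity along the $\ell^2(\Gamma)$-direction, so the limiting $\cN_{pMp}(Q)$-central state annihilates the relatively-compact ideal and factors through the coarse part of $\cC$; combining this coarseness with the relative amenability and with the amenability of $A$ yields that $\cN_{pMp}(Q)\dpr$ is amenable relative to $A$. For the direct-product case one argues by the now-standard induction on the number $n$ of factors: apply the single-factor dichotomy to each $\Gamma_i$ (viewing $M$ as a crossed product by $\Gamma_i$ over $A \rtimes \prod_{j \neq i}\Gamma_j$) and combine the resulting intertwining and relative-amenability conclusions via an intertwining-into-intersections lemma, cf.\ \cite{CSU11}. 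The genuinely new point, and the main obstacle, is to make the boundary/bi-exactness argument of \cite{BO08} interact correctly with the \emph{relative} (over $A$) weak-compactness net, in place of the malleable deformation used in \cite{PV11}; the rest is bookkeeping assembled from \cite{PV11} and \cite{BO08}.
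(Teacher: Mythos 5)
Your overall architecture does match the paper's: reduce to showing the unknown Cartan subalgebra intertwines into $\rL^\infty(X)$ and upgrade via \cite[Theorem A.1]{Po01}; prove a relative strong solidity dichotomy by combining the relative weak compactness of \cite[Theorem 5.1]{PV11} with a bi-exactness argument modelled on \cite[Theorem 15.1.5]{BO08}; and treat products by applying the one-factor dichotomy with $\Gamma_i$ acting over the co-factor base and then combining intertwinings (the paper uses \cite[Proposition 2.5 and Lemma 2.6]{Va10}, with no induction on $n$). But there are two genuine gaps, and they sit exactly where the paper's new work lies. First, your sketch of the dichotomy invokes ``the amenability of $A$'' (the base algebra). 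That is harmless when the base is $\rL^\infty(X)$, but your own treatment of products applies the single-factor dichotomy over the base $\rL^\infty(X) \rtimes \prod_{j \neq i} \Gamma_j$, which is nonamenable as soon as $n \geq 2$ (each $\Gamma_j$ is nonamenable). The paper's key technical result (Theorem \ref{thm.key}) is proved for a completely arbitrary tracial base $B$, with no amenability assumption; this is precisely what separates it from \cite{CS11,CSU11}, which need $A$ and $B$ amenable and a genuinely weakly compact embedding, and it is what makes the product case (and Theorem \ref{thm.no-Cartan}) work. If your coarseness argument truly uses amenability of the base, your plan only yields the case $n=1$.

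Second, the step you dismiss as bookkeeping --- passing from ``the limiting state annihilates the relatively compact ideal and factors through the coarse part'' to ``$\cN_{pMp}(Q)\dpr$ is amenable relative to the base'' --- is the heart of the proof and is not resolved by the \cite{PV11} net plus bi-exactness alone. The operator-norm estimate that bi-exactness provides holds a priori only on the algebraic core built from $B \otalg \C\Gamma$ (finitely supported Fourier expansions), whereas the asymptotic invariance of the net is under normalizer unitaries $u \in \cN_M(A)$, which are not finitely supported; so the limiting functional cannot be transferred to the coarse side on the elements that matter. The paper's resolution is a second, independent use of weak amenability: a sequence of finitely supported Herz--Schur multipliers with uniformly bounded cb-norm is applied in the two $M$-variables to extend the estimate, up to the square of the Cowling--Haagerup constant, to all of $M \otalg M\op \otalg P\op \otalg P$, after which positivity, Hahn--Banach and the commutation structure produce the $P$-central state on the basic construction. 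Nothing in your proposal plays this role, and your phrasing (coarseness plus amenability of the base) points back to the \cite{CS11} route, which is exactly what fails for arbitrary actions. A smaller point: the implication ``$Q \not\prec$ base $\Rightarrow$ the net escapes all sets small relative to $\cG$'' also requires a genuine relative argument (the paper runs an iteration in the spirit of \cite[Lemma 6.2]{CSU11}), though that part is adaptable.
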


One should point out that, although in our proof of Theorem \ref{thm.main-hyp} we use an approach based on bi-exactness rather than the $\QHreg$ property,
we owe much to ideas in \cite{CS11}, on how to go beyond groups admitting proper $1$-cocycles. In fact, in a first version of this paper we
gave a proof of Theorem \ref{thm.main-hyp} using the methods of \cite{CS11}, before we found the present much simpler and direct  argument.

Recall from \cite[Definition 1.4]{PV11} the following definition.

\begin{definition}\label{def.Crigid}
We say that a countable group $\Gamma$ is $\cC$-rigid (Cartan-rigid) if for every free ergodic pmp action $\Gamma \actson (X,\mu)$, the II$_1$ factor $\rL^\infty(X) \rtimes \Gamma$ has $\rL^\infty(X)$ as its unique Cartan subalgebra up to unitary conjugacy.

In view of \cite[Proposition 4.12]{OP07}, we say that a countable group $\Gamma$ is $\cCs$-rigid if for every free ergodic pmp action $\Gamma \actson (X,\mu)$, the II$_1$ factor $M = \rL^\infty(X) \rtimes \Gamma$ has the following property~: every maximal abelian subalgebra $A \subset M$ whose normalizer $\cN_M(A)\dpr$ is a finite index subfactor of $M$, is unitarily conjugate to $\rL^\infty(X)$.
\end{definition}

The groups $\Gamma$ in Theorem \ref{thm.main-hyp} are in fact $\cCs$-rigid. Moreover the same holds for all groups that are measure equivalent with $\Gamma$ (see Definition \ref{def.ME}).

\begin{theorem}\label{thm.main-ME}
The following groups are $\cC$-rigid and $\cCs$-rigid~:
\begin{enumerate}
\item groups that are measure equivalent to a direct product of $1 \leq n < \infty$ weakly amenable, nonamenable, bi-exact groups,
\item countable closed subgroups $\Gamma$ of a direct product $G = G_1 \times \ldots \times G_n$ of connected noncompact rank one simple Lie groups $G_i$ with finite center, such that the image of $\Gamma$ in $G_i$ has a nonamenable closure for all $1 \leq i \leq n$.
\end{enumerate}
\end{theorem}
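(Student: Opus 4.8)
The plan is to deduce both statements from the dichotomy underlying Theorem \ref{thm.main-hyp}. Note first that a Cartan subalgebra is in particular a maximal abelian subalgebra whose normalizer generates $M$ (a subfactor of index one), so $\cCs$-rigidity implies $\cC$-rigidity and it suffices to prove that the groups in (1) and (2) are $\cCs$-rigid. I will use the two ingredients of the proof of Theorem \ref{thm.main-hyp} separately. First, \emph{weak amenability} of the acting group $\Gamma$ gives, as in \cite[Theorem 5.1]{PV11}, that for a masa $A\subset M=\rL^\infty(X)\rtimes\Gamma$ with $\cN_M(A)\dpr$ a finite-index subfactor, $\cN_M(A)$ is weakly compact relative to $\rL^\infty(X)$. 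Then the \emph{bi-exactness} input forces a dichotomy: either $A\prec_M\rL^\infty(X)$, in which case, $\rL^\infty(X)$ being Cartan and $A$ regular, $A$ is unitarily conjugate to $\rL^\infty(X)$ by Popa's intertwining argument; or the algebra generated by $\cN_M(A)$ is amenable relative to $\rL^\infty(X)\rtimes\Gamma_0$ inside $M$ for some normal subgroup $\Gamma_0\trianglelefteq\Gamma$ with $\Gamma/\Gamma_0$ nonamenable; since $\cN_M(A)\dpr$ has finite index in $M$, the second alternative would make $M$ itself amenable relative to $\rL^\infty(X)\rtimes\Gamma_0$, i.e.\ $\Gamma/\Gamma_0$ amenable, a contradiction. (For a single bi-exact group $\Gamma$ one has $\Gamma_0=\{e\}$; for a direct product, or for a subgroup of a direct product, $\Gamma_0$ runs over the finitely many coordinate subgroups.)

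For statement (1) it then suffices, by Theorem \ref{thm.main-hyp}, to check that $\cCs$-rigidity is a measure equivalence invariant. Given $\Lambda$ measure equivalent to a direct product $\Gamma$ of weakly amenable, nonamenable, bi-exact groups and a free ergodic pmp action $\Lambda\actson(Y,\nu)$, I would compose the coupling realizing the measure equivalence with this action, via the standard dictionary between measure equivalence couplings and stable orbit equivalences of actions (Furman, Gaboriau), to obtain a free ergodic pmp action $\Gamma\actson(\Ytil,\nu')$, a number $t>0$, and an isomorphism $\Theta:\rL^\infty(\Ytil)\rtimes\Gamma\to(\rL^\infty(Y)\rtimes\Lambda)^t$ carrying $\rL^\infty(\Ytil)$ onto $\rL^\infty(Y)^t$. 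By $\cCs$-rigidity of $\Gamma$, $\rL^\infty(\Ytil)$ is, up to unitary conjugacy, the unique masa of $\rL^\infty(\Ytil)\rtimes\Gamma$ with finite-index-subfactor normalizer, hence so is $\rL^\infty(Y)^t\subset(\rL^\infty(Y)\rtimes\Lambda)^t$. It remains to descend the amplification: if $A\subset M:=\rL^\infty(Y)\rtimes\Lambda$ is a masa with $\cN_M(A)\dpr$ of finite index, then $A\ot D\subset M^t$ is again such a masa ($D$ a Cartan subalgebra of the ambient type I factor), so a unitary of $M^t$ conjugating $A\ot D$ onto $\rL^\infty(Y)\ot D$ can be pushed down to a unitary of $M$ conjugating $A$ onto $\rL^\infty(Y)$ — this compatibility of the finite-index-normalizer condition with amplification being exactly why the $\cCs$-formulation is used; cf.\ \cite[Proposition 4.12]{OP07}. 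Thus $\Lambda$ is $\cCs$-rigid.

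For statement (2), let $\Gamma$ be a countable closed subgroup of $G=G_1\times\cdots\times G_n$; being countable and closed in a second countable locally compact group, $\Gamma$ is discrete. Put $H_i:=\overline{\operatorname{proj}_{G_i}(\Gamma)}\leq G_i$, a nonamenable closed subgroup of the rank one simple Lie group $G_i$. Such an $H_i$ is bi-exact (it acts amenably and small at infinity on the boundary sphere of $G_i$ in the sense of \cite{Oz03}, and bi-exactness passes to closed subgroups; cf.\ \cite{Oz03,BO08}) and has $\Lambda_{\mathrm{cb}}(H_i)=1$ (rank one simple Lie groups have $\Lambda_{\mathrm{cb}}=1$, which is inherited by closed subgroups). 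Hence $\Gamma$, being a discrete subgroup of the product $\prod_iH_i$ of weakly amenable locally compact groups, has $\Lambda_{\mathrm{cb}}(\Gamma)=1$; and for each $i$ the image of $\Gamma$ in $H_i$ is dense, hence nonamenable (a locally compact group with a dense amenable subgroup is amenable), so $\Gamma/(\Gamma\cap K_i)$ is nonamenable, where $K_i:=\prod_{j\neq i}H_j$. Feeding this data into the argument of Theorem \ref{thm.main-hyp} — relative weak compactness from $\Lambda_{\mathrm{cb}}(\Gamma)=1$, and the relative bi-exactness dichotomy from bi-exactness of the $H_i$, with the $\Gamma_0$ there among the subgroups $\Gamma\cap K_i$ — yields, for every free ergodic pmp action $\Gamma\actson(X,\mu)$, that any masa $A\subset\rL^\infty(X)\rtimes\Gamma$ with finite-index-subfactor normalizer satisfies $A\prec\rL^\infty(X)$ and is therefore unitarily conjugate to $\rL^\infty(X)$. (When $\Gamma$ is a lattice in $\prod_iH_i$, e.g.\ an irreducible lattice in a product of rank one simple Lie groups, one may instead deduce $\cCs$-rigidity from (1), since $\prod_iH_i$ then contains a direct product of cocompact lattices, each weakly amenable, nonamenable and bi-exact, and measure equivalent to $\Gamma$.)

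The main obstacle is, for (2), to verify that the proof of Theorem \ref{thm.main-hyp} genuinely runs in the generality of a discrete subgroup of a direct product of bi-exact \emph{locally compact} (not merely discrete) groups with nonamenable projections — this rests on bi-exactness of nonamenable closed subgroups of rank one simple Lie groups and on the relative weak compactness and relative bi-exactness estimates surviving the restriction to $\Gamma$ — and, for (1), the amplification descent, which is routine but genuinely requires the $\cCs$-formulation so that finite index and amplification interact cleanly.
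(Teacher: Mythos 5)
Your opening reduction (Cs-rigidity implies C-rigidity, and the dichotomy for the product groups themselves via weak amenability, relative bi-exactness and the finite-index upgrade as in Lemma \ref{lem.rel-amen}) is consistent with the paper. The genuine gaps are in the two transfer steps. For (1), composing the coupling $\Om$ with $\Lambda \actson Y$ produces the coupling $\Om \times Y$, whose two quotient actions are $\Gamma \actson (\Om \times Y)/\Lambda$ and $\Lambda \actson (\Om/\Gamma) \times Y$ with the diagonal action. So the induced $\Gamma$-action is stably orbit equivalent to the \emph{extension} $\Lambda \actson (\Om/\Gamma)\times Y$, not to $\Lambda \actson Y$; the claimed isomorphism $\rL^\infty(\Ytil)\rtimes\Gamma \cong (\rL^\infty(Y)\rtimes\Lambda)^t$ carrying $\rL^\infty(\Ytil)$ onto $\rL^\infty(Y)^t$ only exists when $\Om/\Gamma$ is a point. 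Descending a unique-Cartan (or unique finite-index-normalizer masa) statement from the crossed product of the extension to $\rL^\infty(Y)\rtimes\Lambda$ is exactly the nontrivial missing step: a masa $A$ with finite-index normalizer does not obviously yield one upstairs, since $\cN(A)$ need not normalize $\rL^\infty(\Om/\Gamma)\vee A$. The paper sidesteps this by never leaving the original factor: in Theorem \ref{thm.ME} one embeds $N=\rL^\infty(X\times\cU)\rtimes\Gamma$ into $N\ovt\rL\Lambda$ through the ME cocycle, applies Theorem \ref{thm.key} there (with Lemma \ref{lem.rel-amen} using the finite-index hypothesis), and then converts $\Delta(A)\prec^f N\ot 1$ back into $A\prec_M\rL^\infty(X)$ by the Fourier-support estimate \eqref{eq.hierennu}, which uses only that the $\Gamma$-fundamental domain $\cU$ has finite measure. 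Your amplification descent and the ergodicity of the induced action are also glossed, but they are minor next to this misidentification.

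For (2), your main route requires weak amenability and bi-exactness machinery for the \emph{locally compact} groups $H_i=\overline{\pi_i(\Gamma)}$ together with a version of Theorem \ref{thm.key} in that generality; this is not available in the paper and you acknowledge not having it, so it is an unfilled gap rather than an obstacle you overcome. (Also, $\Lambda_{\mathrm{cb}}=1$ is false for the rank one groups $\mathrm{Sp}(n,1)$ and $F_{4(-20)}$; weak amenability with a larger constant does hold and suffices for \cite[Theorem 5.1]{PV11}, but the constant-one claim is wrong.) Your fallback through (1) covers only the case where $\Gamma$ is a lattice in $\prod_i H_i$, whereas statement (2) allows discrete subgroups of infinite covolume, which is its whole point. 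The paper's argument instead chooses cocompact lattices $\Lambda_i<G_i$ (hyperbolic, hence weakly amenable, nonamenable, bi-exact), regards $\Gamma$ as a measure equivalence \emph{subgroup} of $\Lambda_1\times\cdots\times\Lambda_n$ via $\Om=G$ with left-right translation (Definition \ref{def.ME} only demands a finite-measure $\Lambda$-fundamental domain), applies Theorem \ref{thm.ME}, and excludes its second alternative by criterion (b): that alternative would yield, through an invariant mean and Lemma \ref{lem.criterion-amen}, an amenable closure for the projection of $\Gamma$ to some $G_i$, contradicting the hypothesis.
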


Following \cite{OP07}, a finite von Neumann algebra $M$ is called \emph{strongly solid} if the normalizer of any diffuse amenable subalgebra of $M$ is still amenable. It is shown in \cite{OP07} that the free group factors $\rL \F_n$ are strongly solid. As explained above, it was proven in \cite{CS11} that in fact the group von Neumann algebras $\rL \Gamma$ of all hyperbolic groups $\Gamma$ are strongly solid.
Crossed products $B \rtimes \Gamma$ are typically not strongly solid, but we establish the following relative strong solidity property: for weakly amenable, bi-exact groups $\Gamma$, we prove the dichotomy that if a subalgebra $A$ of a crossed product $B \rtimes \Gamma$ is amenable relative to $B$, then either $A$ embeds into $B$ (in the sense of intertwining-by-bimodules, see Definition \ref{def.intertwine}), or $A$ has a normalizer that remains amenable relative to $B$.

\begin{theorem}\label{thm.main-two}
Let $\Gamma$ be a weakly amenable, bi-exact group and let $\Gamma \actson (B,\tau)$ be an arbitrary trace preserving action on the tracial von Neumann algebra $(B,\tau)$. Put $M = B \rtimes \Gamma$.

If $q \in M$ is a projection and $A \subset qMq$ is a von Neumann subalgebra that is amenable relative to $B$, then $\cN_{qMq}(A)\dpr$ remains amenable relative to $B$, or $A \prec_M B$.
\end{theorem}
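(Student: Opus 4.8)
The plan is to assume $A\not\prec_M B$ and to deduce that $P:=\cN_{qMq}(A)\dpr$ is amenable relative to $B$; after replacing $q$ by the unit of $A$, we may assume that this unit equals $q$, and we write $\cG=\cN_{qMq}(A)$, so that $P=\cG\dpr$. Recall (see \cite{PV11}) that $A$ being amenable relative to $B$ is equivalent to the existence of an $A$-central state $\Omega$ on $\langle M,e_B\rangle$ with $\Omega(x)=\tau(x)$ for all $x\in qMq$. Our task is to produce such a state that is, moreover, invariant under $\Ad(v)$ for every $v\in\cG$: any such state is then automatically $P$-central, since for $y\in q\langle M,e_B\rangle q$ and $v\in\cG$ one has $\Omega(vy)=\Omega\bigl(v(yv)v^*\bigr)=\Omega(yv)$ (using $vv^*=q$ and the $\Ad(\cG)$-invariance), and one passes from $\cG$ to $P=\cG\dpr$ by normality; and a $P$-central state on $q\langle M,e_B\rangle q$ restricting to $\tau$ on $qMq$ witnesses precisely that $P$ is amenable relative to $B$.

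\textbf{Step 1 (relative weak compactness from weak amenability).} Since $\Gamma$ is weakly amenable, \cite[Theorem 5.1]{PV11} --- which we first record in the routine extension from Cartan subalgebras to an arbitrary subalgebra $A$ that is amenable relative to $B$ --- shows that the normalizer action $\cG\actson A$ is weakly compact relative to $B$: there is a net of unit vectors in a suitable completion of $q\bigl(\langle M,e_B\rangle\otalg\overline{\langle M,e_B\rangle}\bigr)q$ which is asymptotically central for $A$ on both legs with matching left and right $A$-actions, and asymptotically invariant under $v\otimes\overline v$ for all $v\in\cG$. Combining this net with one coming from the relative amenability of $A$ --- namely a net in $\rL^2(\langle M,e_B\rangle)q$ that is asymptotically $A$-central and whose vector functionals converge weakly to $\tau$ on $M$ --- one glues the two to obtain a single state $\Omega$ on $\langle M,e_B\rangle$ that is $A$-central, $\Ad(\cG)$-invariant, and satisfies $\Omega|_{qMq}=\tau$. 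The only thing that prevents us from concluding at this point is that $\Omega$ need not ``see'' $B$ sufficiently; this is exactly what bi-exactness repairs.

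\textbf{Step 2 (bi-exactness and the dichotomy, after \cite[Theorem 15.1.5]{BO08}).} Identify $\langle M,e_B\rangle\cong\B(\ell^2\Gamma)\ovt B$ so that $e_B$ becomes a rank-one projection tensored with $1$, $u_g$ becomes $\lambda_g\otimes 1$, and $B$ sits inside diagonally. Bi-exactness of $\Gamma$ provides a translation-invariant unital $C^*$-subalgebra $\cS\subseteq\ell^\infty(\Gamma)$ containing $c_0(\Gamma)$ on whose quotient $\cS/c_0(\Gamma)$ the left--right action of $\Gamma\times\Gamma$ is amenable. Let $\cD$ be the $C^*$-subalgebra of $\langle M,e_B\rangle$ generated by $M$, the right regular unitaries $\rho_g\otimes 1$, $e_B$ and $\cS\otimes 1$; then the closed ideal $\cJ\subseteq\cD$ generated by $e_B$ consists of operators that are ``compact relative to $B$'', one has $M\cap\cJ=0$, and --- this is where bi-exactness is used --- the quotient $\cD/\cJ$ is nuclear relative to $B$ (the identity map $\id_{\cD/\cJ}$ factors approximately through matrix algebras over $B$ via $B$-bimodular unital completely positive maps). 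Now split $\Omega|_{\cD}=\Omega_{\mathrm n}+\Omega_{\mathrm s}$ into its parts that are, respectively, normal and singular with respect to the canonical semifinite trace $\Tr$ of $\langle M,e_B\rangle$; both summands are again $A$-central and $\Ad(\cG)$-invariant. If $\Omega_{\mathrm n}\neq 0$, its (normal, $A$-central) extension to $\langle M,e_B\rangle$ has a nonzero density operator in $A'\cap\langle M,e_B\rangle$, so one of its spectral projections $p\in A'\cap\langle M,e_B\rangle$ is nonzero with $\Tr(p)<\infty$, and Popa's intertwining criterion yields $A\prec_M B$ --- contrary to assumption. Hence $\Omega|_{\cD}$ is singular, therefore vanishes on the ``$B$-compact'' ideal $\cJ$, and descends to a state $\overline\Omega$ on $\cD/\cJ$ that is $A$-central, $\Ad(\cG)$-invariant, and (using $M\cap\cJ=0$) satisfies $\overline\Omega|_M=\tau$. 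Since $\cD/\cJ$ is nuclear relative to $B$, the $B$-bimodular approximations of $\id_{\cD/\cJ}$ through matrix algebras over $B$, applied to $\overline\Omega$, produce a net of vectors in $\rL^2(\langle M,e_B\rangle)q$ having the defining properties of relative amenability of $P$; more concretely, the construction yields a $P$-central state on $q\langle M,e_B\rangle q$ that restricts to $\tau$ on $qMq$. Hence $P$ is amenable relative to $B$, completing the proof.

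\textbf{Expected main obstacle.} The technical heart is Step 2: constructing the ``$B$-relative boundary'' algebra $\cD$ and proving that $\cD/\cJ$ is genuinely nuclear relative to $B$ --- the $B$-twisted analogue of the boundary nuclearity underlying \cite[Chapter 15]{BO08} --- and verifying that the only way a normal $A$-central functional can detect the ideal $\cJ$ is through $A\prec_M B$. A second, more bookkeeping-flavored difficulty is the gluing in Step 1: the weak-compactness net lives on a ``doubled'' basic construction and controls only the $A$-valued marginals, whereas relative amenability controls the $M$-valued marginals on a single basic construction, and the two must be merged into a single state that is simultaneously $A$-central, $\Ad(\cG)$-invariant and equal to $\tau$ on $qMq$. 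Recording the needed generalization of \cite[Theorem 5.1]{PV11} (from Cartan subalgebras to arbitrary $A$ amenable relative to $B$) is a prerequisite but not a serious difficulty.
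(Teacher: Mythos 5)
There is a genuine gap, and it occurs already in your Step 1. You claim that relative weak compactness (from \cite[Theorem 5.1]{PV11}) plus relative amenability of $A$ can be ``glued'' into a single state $\Omega$ on $\langle M,e_B\rangle$ that is $A$-central, $\Ad(\cN_{qMq}(A))$-invariant and restricts to $\tau$ on $qMq$. By your own opening reduction, such a state is already a witness that $P=\cN_{qMq}(A)\dpr$ is amenable relative to $B$ -- so if Step 1 were correct, the theorem would follow for every weakly amenable group, with bi-exactness playing no role. That conclusion is false: take $\Gamma=\Z\times\F_2$ (weakly amenable, not bi-exact), $B=\C$, $A=\rL\Z\ot 1\subset M=\rL\Z\ovt\rL\F_2$; then $A$ is amenable, $A\not\prec_M\C$, and $A=\cZ(M)$, so $P=M$ is nonamenable. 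What \cite[Theorem 5.1]{PV11} actually provides is much weaker and lives elsewhere: a net of normal states $\om_n$ on an auxiliary algebra $\cN=N\ovt\rL\Gamma$ (with $N$ generated by $B$ and $P\op$ on $\rL^2(M)\ot_A\rL^2(P)$) that is only \emph{asymptotically} invariant under the \emph{coupled} unitaries $\Ad(\pi(u)\theta(\ubar))$, $u\in\cN_M(A)$, never under $\Ad(\pi(u))$ alone. Decoupling this twisted approximate invariance into an honest $P$-central state on $\langle M,e_B\rangle$ is exactly where bi-exactness must be used, and it cannot be postponed to a later ``repair''; indeed, in the paper the case distinction is not normal-versus-singular but a concentration dichotomy for the implementing vectors $\xi_n$ (either they escape all subsets small relative to $\cG$, giving relative amenability of $P$, or they concentrate, giving $A\prec_M B\rtimes\Sigma$).

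Your Step 2 also rests on an unproven and problematic claim: that the boundary quotient $\cD/\cJ$ is ``nuclear relative to $B$'' (approximate $B$-bimodular ucp factorization through matrix algebras over $B$) for an \emph{arbitrary} tracial $B$. This is precisely the point where the known C$^*$-algebraic route (\cite[Theorem 15.1.5]{BO08}, \cite{CS11,CSU11}) needs additional hypotheses -- in \cite{CS11,CSU11} one assumes $A$ and $B$ amenable and a genuinely weakly compact embedding -- and the present paper explicitly goes beyond that setting. The paper's Case 1 avoids any relative nuclearity: it transfers the problem to $\B(H)\ovt\rL\Gamma\ovt\rL\Gamma$ via the maps $\Psi$ and $\Theta$, uses bi-exactness only through the operator-norm estimate \eqref{eq.est-norm-Theta} on the algebraic core $\cD_0$, and then uses weak amenability a \emph{second} time, through Herz--Schur multipliers acting on the two $\ell^2(\Gamma)$ legs, to promote this estimate to all of $\cD$ at the cost of a factor $\kappa^2$; since the multipliers never touch $B$, the argument is insensitive to the structure of $B$. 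In your outline weak amenability appears only in Step 1, and no mechanism is given for why the needed $\Ad$-invariance and the restriction-to-$\tau$ property would survive the ucp approximations to yield a $P$-central state; in the paper this final step is instead achieved by the specific identity $\Om_2(\Psi(u\ot\ubar\ot\ubar\ot u))=1$ together with the fact that $\Psi(1\ot M\op\ot P\op\ot P)$ commutes with $B\ovt\B(\ell^2(\Gamma))\ot 1$. So both the input to your Step 2 and its key technical claim are missing, and the route as described would not close without essentially reconstructing the paper's multiplier argument or reverting to the extra hypotheses of \cite{CS11,CSU11}.
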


Both Theorem \ref{thm.main-hyp} and \ref{thm.main-two} will be deduced in Section \ref{sec.proofs} from our more technical Theorem \ref{thm.key}, also yielding the following new class of tensor product II$_1$ factors without Cartan subalgebras, improving \cite[Corollary 2]{OP07}.

\begin{theorem}\label{thm.no-Cartan}
Let $\Gamma$ be a nonamenable, icc, weakly amenable, bi-exact group and let $N$ be an arbitrary II$_1$ factor. Then $N \ovt \rL \Gamma$ has no Cartan subalgebra.
\end{theorem}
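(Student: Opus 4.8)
The plan is to derive Theorem~\ref{thm.no-Cartan} from Theorem~\ref{thm.main-two} (or from the underlying Theorem~\ref{thm.key}) by a contradiction argument. Suppose $M = N \ovt \rL\Gamma$ has a Cartan subalgebra $A \subset M$. Writing $B = N \ovt 1 \subset M$, observe that $M = B \rtimes \Gamma$ for the trivial (identity) action of $\Gamma$ on $N$: indeed $\rL\Gamma = \mathbb{C} \rtimes \Gamma$ and tensoring with $N$ gives $N \ovt \rL\Gamma = N \rtimes \Gamma$ with $\Gamma$ acting trivially on $N$, which is trace preserving. So Theorem~\ref{thm.main-two} applies verbatim with this $B$ and $q = 1$.

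Next I would check the relative amenability hypothesis. Since $A$ is abelian, it is in particular amenable, hence amenable relative to $B$ inside $M$ (any subalgebra with an amenable "ambient quotient" — here $A$ itself amenable — is amenable relative to any $B$; more precisely abelianness of $A$ gives amenability of $A$ as a von Neumann algebra, and amenable subalgebras are amenable relative to every subalgebra). Theorem~\ref{thm.main-two} then gives the dichotomy: either $\cN_M(A)\dpr$ is amenable relative to $B$, or $A \prec_M B$. In the first case, since $A$ is Cartan its normalizer generates $M$, so $M$ itself would be amenable relative to $B = N \ovt 1$; but relative amenability of $M$ over $B$ combined with amenability of the "complement" would force $\rL\Gamma$ to be amenable, i.e.\ $\Gamma$ amenable, contradicting our hypothesis. (Concretely: $M$ amenable relative to $N \ovt 1$ inside $M$ means the $M$-$M$ bimodule $\rL^2\langle M, e_{N\ovt 1}\rangle$ weakly contains $\rL^2(M)$; slicing off the $N$ tensor factor yields that the coarse bimodule $\rL^2(\rL\Gamma)\ovt\rL^2(\rL\Gamma)$ weakly contains $\rL^2(\rL\Gamma)$, i.e.\ $\rL\Gamma$ is amenable, whence $\Gamma$ is amenable since $\Gamma$ is icc — contradiction.)

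It remains to rule out the second alternative, $A \prec_M B = N \ovt 1$. Here I would use that $A$ is maximal abelian together with a standard argument about intertwining into a von Neumann algebra whose relative commutant is large: if $A \prec_M N\ovt 1$, then there is a nonzero projection, a partial isometry $v$, and a subalgebra $A_0 \subset A$ with $v^* A_0 v \subset N \ovt 1$. Since $A$ is maximal abelian in $M$ and $N \ovt 1$ has relative commutant $1 \ovt \rL\Gamma$, one derives that a corner of $A$ can be conjugated into $1 \ovt \rL\Gamma$; but $\rL\Gamma$ for $\Gamma$ nonamenable icc is a II$_1$ factor with no abelian subalgebra that is "Cartan-like" — more to the point, one shows that $A \prec_M 1 \ovt \rL\Gamma$ forces $N$ to be a direct integral of amenable (indeed, via the Cartan hypothesis, hyperfinite) pieces, contradicting that $N$ is an arbitrary — in particular possibly non-amenable — II$_1$ factor. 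Actually the cleaner route: combine $A \prec_M N\ovt 1$ with the fact that $\cN_M(A)'' = M$ via Popa's intertwining machinery to conclude that $M = N \ovt \rL\Gamma$ has a corner that embeds into an amplification of $N \ovt 1$ as the inclusion of an algebra normalized by all of $M$, which by a bimodule/height argument pushes $\rL\Gamma$ into being amenable — again a contradiction.

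The main obstacle is this last step, handling $A \prec_M N \ovt 1$: unlike the trivial relative-amenability case, one cannot simply ``slice off'' $N$ because $N$ is arbitrary (possibly far from abelian or amenable), so the embedding $A \prec N\ovt1$ is genuinely possible abstractly and must be excluded using the specific structure — that $A$ is \emph{Cartan}, hence regular, and that regularity is incompatible with being intertwined into a subfactor whose relative commutant $1\ovt\rL\Gamma$ is a nonamenable factor that the normalizer must also normalize. I expect the precise argument to invoke \cite[Proposition 4.12]{OP07}-type reasoning or the transitivity/heredity properties of $\prec$ together with Theorem~\ref{thm.key}'s full strength applied to the inclusion $1 \ovt \rL\Gamma \subset M$, rather than to $N \ovt 1 \subset M$, to get a self-contradiction from the two halves of the dichotomy applied to two different ``$B$''s.
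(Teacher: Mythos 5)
Your setup and the first half of the dichotomy are correct and essentially the paper's: view $M = N \ovt \rL\Gamma$ as $B \rtimes \Gamma$ with $B = N \ot 1$ and the trivial action, note that $A$ is abelian hence amenable relative to $B$, and apply Theorem \ref{thm.main-two}/\ref{thm.key}; if $\cN_M(A)\dpr = M$ were amenable relative to $N \ot 1$, then $\rL\Gamma$ would be amenable and $\Gamma$ amenable, a contradiction (icc is not needed for this step).

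The genuine gap is the second alternative, $A \prec_M N \ot 1$, which you correctly identify as the main obstacle but do not actually resolve; the sketches you offer would fail. First, "$A \prec_M N\ot 1$ forces $N$ to be a direct integral of amenable (hyperfinite) pieces, contradicting that $N$ is arbitrary" is not a contradiction: the theorem must hold in particular when $N$ \emph{is} the hyperfinite II$_1$ factor, so no conclusion about $N$ being amenable can finish the proof. Second, from $A \prec_M N \ot 1$ you cannot conjugate a corner of $A$ into $1 \ot \rL\Gamma$, and regularity of $A$ does not let you upgrade $A \prec_M N\ot 1$ to an embedding of a corner of $M$ into an amplification of $N \ot 1$, nor does it "push $\rL\Gamma$ into being amenable". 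Third, Theorem \ref{thm.key} cannot be applied with "$B = 1 \ot \rL\Gamma$": its hypotheses require $M$ to be a crossed product of the chosen $B$ by the weakly amenable, bi-exact group, and $M$ has no such decomposition over $1 \ot \rL\Gamma$. The missing idea, which is exactly what the paper uses, is the relative-commutant duality for intertwining: since $A$ is maximal abelian, $A \prec_M N \ot 1$ implies by \cite[Lemma 3.5]{Va07} that $(N\ot 1)' \cap M = 1 \ot \rL\Gamma \prec_M A' \cap M = A$. This is absurd: because $\Gamma$ is icc and nonamenable, $\rL\Gamma$ is a nonamenable II$_1$ factor, and there is no normal $*$-homomorphism from a corner of it into an amplification of the abelian (type I) algebra $A$. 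Note that this is precisely where the icc hypothesis enters; your proposal never uses it in an essential way, which is a sign that the argument as sketched could not be complete.
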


A statement similar to \ref{thm.main-two} holds for direct product groups and goes as follows. We use the strong intertwining notation $\prec^f$ that is introduced in Definition \ref{def.intertwine} below.

\begin{theorem}\label{thm.products}
Let $\Gamma = \Gamma_1 \times \cdots \times \Gamma_n$ be the direct product of $n \geq 1$ weakly amenable, bi-exact groups $\Gamma_i$. Let $\Gamma \actson (B,\tau)$ be an arbitrary trace preserving action on the tracial von Neumann algebra $(B,\tau)$. Put $M = B \rtimes \Gamma$. Let $A \subset qMq$ be a von Neumann subalgebra that is amenable relative to $B$ and put $P := \cN_{qMq}(A)\dpr$.

Then there exist projections $p_0,\ldots,p_n \in \cZ(P)$, some of which might be zero, such that\linebreak $p_0 \vee \cdots \vee p_n = q$ and
\begin{itemize}
\item $P p_0$ is amenable relative to $B$,
\item for every $i=1,\ldots,n$ we have $A p_i \prec^f_M B \rtimes \Gammah_i$ where $\Gammah_i$ is the direct product of all $\Gamma_j$, $j \neq i$.
\end{itemize}
\end{theorem}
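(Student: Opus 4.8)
The plan is to bootstrap Theorem~\ref{thm.products} from $n$ successive applications of Theorem~\ref{thm.main-two}, one per direct factor, obtained by writing $M = (B \rtimes \Gammah_i) \rtimes \Gamma_i$ each time (one could instead invoke the more technical Theorem~\ref{thm.key} directly with the family $\{\Gammah_i\}$, but the bootstrap is cleaner to describe). Two structural facts are used throughout. First, $\Gammah_i$ is \emph{normal} in $\Gamma$, so $B \rtimes \Gammah_i$ is globally invariant under $\cU(M)$; this is what lets intertwining into $B \rtimes \Gammah_i$ interact with conjugation by the normalizer of $A$. Second, relative amenability is monotone in the second variable, so amenability of $A$ (and of any corner $Ap$ with $p \in A' \cap qMq$) relative to $B$ gives the same relative to each $B \rtimes \Gammah_i$. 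I also use that $A' \cap qMq \subseteq P$ (it is generated by the unitaries of $M$ commuting with $A$) and that $\cN_{pMp}(Ap)\dpr = Pp$ for $p \in \cZ(P)$.

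\emph{The projections.} For $i = 1,\dots,n$ let $p_i$ be the largest projection in $\cZ(A' \cap qMq)$ with $Ap_i \prec^f_M B \rtimes \Gammah_i$ (the maximal intertwining projection of Definition~\ref{def.intertwine}), so that $A(q - p_i) \not\prec_M B \rtimes \Gammah_i$. For $u \in \cN_{qMq}(A)$, conjugating by the unitary $u + (1-q) \in \cU(M)$ fixes $B \rtimes \Gammah_i$, carries $A$ onto $A$, and moves $p_i$ inside $\cZ(A' \cap qMq)$; hence $A(u p_i u^*) \prec^f_M B \rtimes \Gammah_i$, and maximality forces $u p_i u^* = p_i$. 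Thus $p_i \in P' \cap qMq$, and since $p_i \in A' \cap qMq \subseteq P$ we conclude $p_i \in \cZ(P)$. Put $p_0 := q - (p_1 \vee \cdots \vee p_n) \in \cZ(P)$; then $p_0 \vee p_1 \vee \cdots \vee p_n = q$, and $Ap_i \prec^f_M B \rtimes \Gammah_i$ for $i \geq 1$ holds by construction, which is the second bullet.

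\emph{Amenability on $p_0$ and conclusion.} Fix $i$. Since $p_0 \leq q - p_i$ and $A(q - p_i) \not\prec_M B \rtimes \Gammah_i$, also $Ap_0 \not\prec_M B \rtimes \Gammah_i$; moreover $Ap_0$ is amenable relative to $B$, hence relative to $B \rtimes \Gammah_i$. Applying Theorem~\ref{thm.main-two} to $Ap_0 \subseteq p_0 M p_0$ viewed inside $M = (B \rtimes \Gammah_i) \rtimes \Gamma_i$ (for the weakly amenable, bi-exact group $\Gamma_i$), the excluded alternative $Ap_0 \prec_M B \rtimes \Gammah_i$ leaves only that $\cN_{p_0 M p_0}(Ap_0)\dpr = Pp_0$ is amenable relative to $B \rtimes \Gammah_i$; this holds for every $i$. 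It remains to intersect these $n$ conditions. For each $i$, $B \subseteq B \rtimes \Gammah_i \subseteq M$ is a commuting square and the Jones projections $e_{B \rtimes \Gammah_i}$ mutually commute with $e_{B \rtimes \Gammah_1} \cdots e_{B \rtimes \Gammah_n} = e_B$; from this one derives an isomorphism of $M$-bimodules
\[
\rL^2\langle M, e_{B \rtimes \Gammah_1}\rangle \otimes_M \cdots \otimes_M \rL^2\langle M, e_{B \rtimes \Gammah_n}\rangle \;\cong\; \rL^2\langle M, e_B\rangle .
\]
Feeding into this identity the $Pp_0$-bimodular weak containments expressing amenability of $Pp_0$ relative to each $B \rtimes \Gammah_i$ yields that $Pp_0$ is amenable relative to $\bigcap_i (B \rtimes \Gammah_i) = B$ (the equality since $\bigcap_i \Gammah_i = \{e\}$). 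This gives the first bullet and finishes the proof.

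\emph{Main obstacle.} The hard part is this last intersection step: proving the bimodule identity under the commuting-square hypothesis, and, more seriously, checking that weak containments of $M$-bimodules combine correctly under Connes tensor product once restricted to $Pp_0$-bimodules — which is not automatic. I would carry this out via the almost-central-tracial-vector description of relative amenability (constructing such vectors in $\rL^2\langle M, e_B\rangle$ out of vectors for each $\rL^2\langle M, e_{B \rtimes \Gammah_i}\rangle$), or, equivalently, by inducting on $n$ while merging two factors at a time so that only $n = 2$ needs treatment. The rest — the $\prec$-to-$\prec^f$ passage and the placement of the projections in $\cZ(P)$ — is routine given the normality of $B \rtimes \Gammah_i$ in $M$.
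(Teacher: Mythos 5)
Your construction of the projections $p_1,\dots,p_n$ and of $p_0$, and the verification of the second bullet, match the paper's proof (which simply cites \cite[Proposition 2.5]{Va10} to get the maximal intertwining projections in $\cZ(P)$); the reduction to showing that $Pp_0$ is amenable relative to $B$ is also correct, as is the deduction, from Theorem \ref{thm.main-two} applied to $M=(B\rtimes\Gammah_i)\rtimes\Gamma_i$ and $Ap_0\not\prec_M B\rtimes\Gammah_i$, that $Pp_0$ is amenable relative to every $B\rtimes\Gammah_i$. The genuine gap is exactly the step you flag as the ``main obstacle'': passing from amenability of $Pp_0$ relative to each $B\rtimes\Gammah_i$ to amenability relative to $B=\bigcap_i(B\rtimes\Gammah_i)$. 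This intersection property of relative amenability is not a formal consequence of the commuting square position of the algebras $B\rtimes\Gammah_i$, and your proposed mechanism does not work as stated: weak containment of bimodules is not stable under the Connes tensor product in the way you would need (precisely the point you concede is ``not automatic''), and the bimodule identity by itself does not convert the $n$ separate $Pp_0$-central states on the algebras $\langle M,e_{B\rtimes\Gammah_i}\rangle$ into one on $\langle M,e_B\rangle$. The intersection result that is actually available (\cite[Section 2]{PV11}) requires, besides the commuting square condition, the additional hypothesis that one of the subalgebras is regular in $M$ --- which does hold here since $\Gammah_i$ is normal in $\Gamma$ --- and its proof is a substantial argument; you neither invoke it nor supply a substitute, so as written the first bullet is not established.

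The paper avoids this problem entirely, and this is the very reason Theorem \ref{thm.key} is formulated for bi-exactness relative to a \emph{family} of subgroups: by Lemma \ref{lem.biexact-products}, $\Gamma$ is bi-exact relative to $\cG=\{\Gammah_1,\dots,\Gammah_n\}$, and a single application of Theorem \ref{thm.key} to $Ap_0\subset p_0Mp_0$, using that $Ap_0\not\prec_M B\rtimes\Gammah_i$ for every $i$, yields directly that $Pp_0$ is amenable relative to $B$; no intersection of relative amenability is ever taken. Your argument can be repaired either by switching to this one-shot use of Theorem \ref{thm.key} (which you even mention as an alternative but do not carry out), or by quoting/proving the regular-commuting-square intersection result from \cite{PV11}; some such input is indispensable.
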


Note that results of the same type as Theorems \ref{thm.main-two}, resp.\ \ref{thm.products}, were established in \cite{CS11}, resp.\ \cite{CSU11}, under the additional assumption that $A \subset qMq$ is a \emph{weakly compact embedding} and that $A$ and $B$ are amenable von Neumann algebras.

Since for $\cC$-rigid groups $\Gamma$, the classification of group measure space factors $\rL^\infty(X) \rtimes \Gamma$ reduces to the classification of the associated free ergodic pmp actions $\Gamma \actson (X,\mu)$ up to orbit equivalence (OE), Theorem \ref{thm.main-hyp} can be combined with existing OE rigidity results, in particular with the work of \cite{MS02} on OE rigidity for direct products of hyperbolic groups. This leads to the following result. We refer to Section \ref{sec.rigidity} for terminology and to \cite[Section 12]{PV11} for further applications in W$^*$-superrigidity.

\begin{theorem}\label{thm.MoSh}
Let  $\Gamma = \Gamma_1 \times \Gamma_2$ be the direct product of two non-elementary hyperbolic groups. Assume that $\Gamma \actson (X,\mu)$ is a free ergodic pmp action that is aperiodic and irreducible.

If $\rL^\infty(X) \rtimes \Gamma \cong \rL^\infty(Y) \rtimes \Lambda$ for any free mildly mixing pmp action $\Lambda \actson (Y,\eta)$, then $\Gamma \cong \Lambda$ and the actions $\Gamma \actson X$ and $\Lambda \actson Y$ are conjugate.
\end{theorem}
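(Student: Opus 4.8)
The plan is to obtain Theorem~\ref{thm.MoSh} as a consequence of Theorem~\ref{thm.main-hyp} combined with the orbit equivalence rigidity theorem of Monod and Shalom \cite{MS02} for direct products of hyperbolic groups. First I would note that $\Gamma = \Gamma_1 \times \Gamma_2$ is a direct product of two non-elementary hyperbolic groups, and that each such group is weakly amenable, nonamenable and bi-exact; hence $\Gamma$ falls under case~(4) of Theorem~\ref{thm.main-hyp} and is $\cC$-rigid. In particular the II$_1$ factor $M := \rL^\infty(X) \rtimes \Gamma$ has $\rL^\infty(X)$ as its unique Cartan subalgebra up to unitary conjugacy.

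Now fix a $*$-isomorphism $\theta : M \recht N := \rL^\infty(Y) \rtimes \Lambda$. Since $\Lambda \actson (Y,\eta)$ is free and, being mildly mixing, ergodic, the subalgebra $\rL^\infty(Y) \subset N$ is a Cartan subalgebra, so $\theta^{-1}(\rL^\infty(Y))$ is a Cartan subalgebra of $M$. By the uniqueness above there is a unitary $u \in \cU(M)$ with $u\, \rL^\infty(X)\, u^* = \theta^{-1}(\rL^\infty(Y))$, and then $\psi := \theta \circ \Ad u : M \recht N$ is an isomorphism carrying $\rL^\infty(X)$ onto $\rL^\infty(Y)$, i.e.\ an isomorphism of the Cartan inclusions $(\rL^\infty(X) \subset M)$ and $(\rL^\infty(Y) \subset N)$. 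By the Feldman--Moore reconstruction of a Cartan inclusion from its associated measured equivalence relation and $2$-cocycle \cite{FM75}, and because both inclusions here come from \emph{free} pmp actions --- so that the associated $2$-cocycles are trivial --- this isomorphism of inclusions is precisely an isomorphism of the orbit equivalence relations $\cR(\Gamma \actson X)$ and $\cR(\Lambda \actson Y)$. Concretely, $\psi$ restricts to a measure space isomorphism $X \cong Y$ that conjugates the two orbit equivalence relations, so $\Gamma \actson (X,\mu)$ and $\Lambda \actson (Y,\eta)$ are orbit equivalent.

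It then remains to apply the orbit equivalence rigidity theorem of \cite{MS02}: a direct product of two (hence $\geq 2$) non-elementary hyperbolic groups lies within the scope of that theorem, so an orbit equivalence between the aperiodic irreducible action $\Gamma \actson X$ and the free mildly mixing action $\Lambda \actson Y$ forces $\Gamma \cong \Lambda$ and conjugacy of the two actions; the aperiodicity and mild mixing assumptions are exactly what is needed to upgrade the a~priori possibly virtual conjugacy coming from \cite{MS02} to an honest conjugacy. Together with the previous paragraph this proves the theorem. I do not expect any genuine obstacle here: once Theorem~\ref{thm.main-hyp} is in hand, the statement is essentially a corollary, and the only points requiring attention are (i) that uniqueness of the Cartan subalgebra of $M$ yields a genuine (non-stable) orbit equivalence and that freeness of both actions makes the relevant $2$-cocycles vanish, and (ii) that the hypotheses of the Monod--Shalom theorem --- a product of at least two hyperbolic factors, an aperiodic irreducible source action, and a free mildly mixing target action --- are precisely those imposed in the statement.
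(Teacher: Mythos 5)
Your proposal is correct and follows essentially the same route as the paper: Theorem \ref{thm.main-hyp} ($\cC$-rigidity of $\Gamma$) converts the W$^*$-isomorphism into an orbit equivalence of the two actions, and then the Monod--Shalom orbit equivalence rigidity theorem \cite[Theorem 1.10]{MS02} (applicable since non-elementary hyperbolic groups lie in their class $\cC_{\text{reg}}$) yields $\Gamma \cong \Lambda$ and conjugacy of the actions. Your extra remarks on Feldman--Moore and the vanishing of the $2$-cocycles simply spell out the standard passage from unitary conjugacy of Cartan subalgebras to orbit equivalence, which the paper takes for granted.
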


\section{Preliminaries}

Throughout this article we call \emph{tracial von Neumann algebra $(M,\tau)$,} any von Neumann algebra $M$ equipped with a faithful normal tracial state $\tau$.

\subsection{Intertwining by bimodules}

We recall from \cite[Theorem 2.1 and Corollary 2.3]{Po03} the theory of \emph{intertwining-by-bimodules,} summarized in the following definition.

\begin{definition}\label{def.intertwine}
Let $(M,\tau)$ be a tracial von Neumann algebra and $P,Q \subset M$ possibly non-unital von Neumann subalgebras. We write $P \prec_M Q$ when one of the following equivalent conditions is satisfied.
\begin{itemize}
\item There exist projections $p \in P$, $q \in Q$, a normal $*$-homomorphism $\vphi : pPp \recht qQq$ and a nonzero partial isometry $v \in pMq$ such that $x v = v \vphi(x)$ for all $x \in pPp$.
\item It is impossible to find a net of unitaries $u_n \in \cU(P)$ satisfying $\|E_Q(x u_n y^*)\|_2 \recht 0$ for all $x,y \in 1_Q M 1_P$.
\end{itemize}
We write $P \prec^f_M Q$ if $P p \prec_M Q$ for every projection $p \in P' \cap 1_P M 1_P$.
\end{definition}

\subsection{Jones' basic construction}

Let $(M,\tau)$ be a tracial von Neumann algebra and $B \subset M$ a von Neumann subalgebra. Jones' basic construction $\langle M,e_B \rangle$ is defined as the von Neumann algebra acting on $\rL^2(M)$ generated by $M$ and the orthogonal projection $e_B$ of $\rL^2(M)$ onto $\rL^2(B)$. Recall that $\langle M,e_B \rangle$ coincides with the commutant of the right $B$-action on $\rL^2(M)$.

\subsection{Relative amenability}

Recall that a functional $\Omega$ on a von Neumann algebra $\cN$ with subalgebra $P \subset \cN$ is called \emph{$P$-central} if $\Om(x S) = \Om(S x)$ for all $x \in P$, $S \in \cN$.

Let $(M,\tau)$ be a tracial von Neumann algebra, $p \in M$ a projection and $P \subset pMp ,B \subset M$ von Neumann subalgebras. Following \cite[Section 2.2]{OP07} we say that $P$ is \emph{amenable relative to $B$} if the von Neumann algebra $p \langle M, e_B \rangle p$ admits a $P$-central positive functional whose restriction to $pMp$ coincides with $\tau$. We need the following variant of \cite[Corollary 2.6]{PV11}.

\begin{lemma}\label{lem.rel-amen}
Let $(M,\tau)$ be a tracial von Neumann algebra. Assume that $P_1 \subset P_2 \subset M$ and $Q \subset M$ are von Neumann subalgebras such that $P_1 \subset P_2$ is a finite index subfactor.

If $p_1 \in P_1' \cap M$ is a nonzero projection such that $P_1 p_1$ is amenable relative to $Q$ and if $p_2$ denotes the smallest projection in $P_2' \cap M$ that dominates $p_1$, then $P_2 p_2$ is amenable relative to $Q$.
\end{lemma}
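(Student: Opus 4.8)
The plan is to reduce the statement about the larger subfactor $P_2$ to the given hypothesis on $P_1$ by exploiting the finite-index inclusion and the behaviour of relative amenability under finite-index extensions. First I would record the standard fact (a consequence of the definition via $P$-central positive functionals on Jones' basic construction, as in \cite[Section 2.2]{OP07} and \cite[Corollary 2.6]{PV11}) that relative amenability is stable in two directions: if $P p$ is amenable relative to $Q$ and $P_0 \subset P$ is a subalgebra with $P_0' \cap pMp \ni p$, then $P_0 p$ is amenable relative to $Q$; and, conversely, if $P_0 \subset P$ is a \emph{finite-index} inclusion of von Neumann algebras and $P_0 p$ is amenable relative to $Q$, then $P p$ is amenable relative to $Q$. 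The latter is the crucial input: a $P_0$-central positive functional on $p\langle M,e_Q\rangle p$ can be averaged over the finite-index extension (using a Pimsner--Popa orthonormal basis of $P$ over $P_0$, or equivalently the conditional expectation $E_{P_0}: P \to P_0$ of finite index) to produce a $P$-central positive functional with the same restriction to $pMp$, up to a bounded perturbation that can be corrected since we only need the restriction to agree with $\tau$ after normalization.

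The second ingredient I would set up is a comparison between the cut-down $P_1 p_1$ and a cut-down of $P_2$. Because $P_1 \subset P_2$ is a finite-index \emph{subfactor}, the relative commutants $P_1' \cap M$ and $P_2' \cap M$ are finite-dimensional, $P_2' \cap M \subset P_1' \cap M$, and the central projection $p_2 \in P_2' \cap M$ is by definition the central support of $p_1$ under the $P_2'\cap M$-action, equivalently the smallest projection in $P_2' \cap M$ with $p_1 \leq p_2$. The key structural point is that $P_1 p_2 \subset P_2 p_2$ is again a finite-index subfactor inclusion (cutting a finite-index subfactor inclusion by a projection in the relative commutant of the big algebra preserves both the subfactor property and finite index of the inclusion). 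Now $p_1 \leq p_2$ with $p_1 \in P_1' \cap M$, and $p_1$ has central support $p_2$ in $P_2' \cap M$; since $p_2 (P_1' \cap M) p_2$ acts on $p_2 M p_2$ with $p_1$ generating $p_2$ as a central projection, a finite partition-of-unity argument (choosing partial isometries $w_1,\ldots,w_k \in p_2(P_1'\cap M)p_2$ with $w_j w_j^* \leq p_1$ and $\sum w_j^* w_j = p_2$, possible because $p_1$ is a full projection relative to the center generated) lets me transport the relative amenability of $P_1 p_1$ to $P_1 p_2$: the functional on $p_2\langle M,e_Q\rangle p_2$ obtained by summing the $p_1$-functional conjugated by the $w_j$ is $P_1 p_2$-central, and its restriction to $p_2 M p_2$ is a trace, hence a scalar multiple of $\tau$ on each central summand.

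Assembling these: from $P_1 p_1$ amenable relative to $Q$, the cut-down/transport step gives $P_1 p_2$ amenable relative to $Q$; then the finite-index extension step applied to the subfactor inclusion $P_1 p_2 \subset P_2 p_2$ gives $P_2 p_2$ amenable relative to $Q$, which is the conclusion. The main obstacle I anticipate is the finite-index extension step --- making precise how a $P_1 p_2$-central positive functional on $p_2 \langle M, e_Q\rangle p_2$ is pushed to a $P_2 p_2$-central one while keeping the restriction to $p_2 M p_2$ proportional to $\tau$. The clean way is the Pimsner--Popa basis average: with $\{m_i\}$ a finite basis of $P_2 p_2$ over $P_1 p_2$ and $\Om$ the given $P_1 p_2$-central functional, set $\Om'(S) = \sum_i \Om(m_i^* S m_i)$; one checks $\Om'$ is $P_2 p_2$-central using the module relations, positive since each term is, and its restriction to $p_2 M p_2$ is $x \mapsto \sum_i \tau(m_i^* x m_i) = \tau(E_{P_1 p_2}'(x))$ for the "dual" operator-valued weight, which is again a (semifinite, here finite) trace on $p_2 M p_2$ and hence proportional to $\tau$ on each central piece; a final normalization across the finitely many central summands of $p_2 M p_2$ --- possible precisely because $P_2 p_2$ is a \emph{factor} after cutting, so there is only one piece --- yields a functional restricting exactly to $\tau$. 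A minor technical care is needed because the inclusions here are non-unital, but all of \cite[Corollary 2.6]{PV11} is already stated in that generality, so only bookkeeping of the cut-down projections is required.
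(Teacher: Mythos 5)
There is a genuine gap, and it sits exactly at your ``transport'' step from $P_1p_1$ to $P_1p_2$. You propose to find finitely many partial isometries $w_1,\ldots,w_k \in p_2(P_1'\cap M)p_2$ with $w_jw_j^* \leq p_1$ and $\sum_j w_j^*w_j = p_2$, ``because $p_1$ is a full projection relative to the center generated''. But the minimality of $p_2$ is a statement inside $P_2'\cap M$, not inside $P_1'\cap M$: such $w_j$ can only exist if $p_2$ lies under the central support of $p_1$ in $P_1'\cap M$, and this fails in general. Concretely, let $P_1 \subset P_2$ be an irreducible index-$2$ inclusion of II$_1$ factors (say $P_1 = N^{\Z/2} \subset N = P_2$ for an outer action) and let $M = \langle P_2, e_{P_1}\rangle$ be the basic construction, $p_1 = e_{P_1}$. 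Then $P_1'\cap M \cong \C \oplus \C$ is abelian, so the central support of $p_1$ in $P_1'\cap M$ is $p_1$ itself, while $P_2'\cap M = \C$ forces $p_2 = 1$. In an abelian algebra any partial isometry with $w_jw_j^* \leq p_1$ has $w_j^*w_j = w_jw_j^* \leq p_1$, so no finite (or infinite) family can sum to $p_2 = 1$. Thus the intermediate statement ``$P_1p_2$ is amenable relative to $Q$'' cannot be reached by conjugating with elements of $P_1'\cap M$; one genuinely has to use elements of $P_2$ to spread $p_1$ out over $p_2$. A secondary weak point: your normalization argument asserts that the restriction of the averaged functional to $p_2Mp_2$ ``is a trace, hence a scalar multiple of $\tau$ on each central summand''; being $P_2p_2$-central does not make a functional tracial on $p_2Mp_2$, so this also needs an actual argument rather than factoriality of $P_2p_2$.

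For comparison, the paper performs the two moves you try to separate ($P_1 \to P_2$ and $p_1 \to p_2$) in a single step: with a Pimsner--Popa basis $v_1,\ldots,v_n$ of $P_2$ over $P_1$, it sets $T = \sum_i v_i p_1 v_i^*$, checks $T \in P_2'\cap M$ with support projection exactly $p_2$, and defines $\Om_2(S) = \sum_i \Om_1(p_1 v_i^* S v_i p_1)$, which is $P_2p_2$-central. The price of this is precisely the issue you wave away as ``a bounded perturbation that can be corrected'': the restriction of $\Om_2$ to $p_2Mp_2$ is not $\tau$ but $\tau(\,\cdot\,T)$, and since $T$ need not be invertible one corrects it by compressing with $T_n^{1/2}$, where $T_n \in P_2'\cap M$ satisfy $T_nT \leq p_2$ and $T_nT \to p_2$ strongly (so the compression preserves $P_2p_2$-centrality), and passing to a weak$^*$ limit. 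Your second ingredient (averaging over a Pimsner--Popa basis to climb a finite-index inclusion) is indeed the right mechanism, but as written your proof does not reach a correct starting point for it, and the Radon--Nikodym correction it produces is an essential part of the argument, not bookkeeping.
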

\begin{proof}
Take a Pimsner-Popa basis (see \cite[Proposition 1.3]{PP84}) for the finite index subfactor $P_1 \subset P_2$~: we find elements $v_1,\ldots,v_n \in P_2$ and a projection $q \in \M_n(\C) \ot P_1$ such that the map $U : q(\C^n \ot \rL^2(P_1)) \recht \rL^2(P_2) : U(q(e_i \ot x)) = v_i x$ for all $i=1,\ldots,n$, $x \in P_1$, is a unitary operator. Define the normal $*$-homomorphism $\vphi : P_2 \recht q(\M_n(\C) \ot P_1)q$ such that $U(\vphi(x) \xi) = x U(\xi)$ for all $x \in P_2$ and $\xi \in q(\C^n \ot \rL^2(P_1))$. Defining $V \in \M_{1,n}(\C) \ot P_2$ given by $V = \sum_{i=1}^n e_{1i} \ot v_i$, we get that $x V = V \vphi(x)$ for all $x \in P_2$.

Write $T := \sum_{i=1}^n v_i p_1 v_i^*$. A direct computation shows that $T$ is a positive element in $P_2' \cap M$. The support projection of $T$ equals the projection onto the closed linear span of $\{v_i p_1 x \mid i=1,\ldots,n, x \in M\}$. Since $p_1$ commutes with $P_1$ and since the linear span of $v_i P_1$ equals $P_2$, it follows that the support projection of $T$ equals the projection onto the closed linear span of $P_2 p_1 M$. Thus, the support projection of $T$ equals $p_2$.

Since $P_1 p_1$ is amenable relative to $Q$, we get a $P_1 p_1$-central positive functional $\Om_1$ on $p_1 \langle M,e_Q\rangle p_1$ such that $\Om_1(x) = \tau(x)$ for all $x \in p_1 M p_1$. Define the positive functional $\Om_2$ on $p_2 \langle M,e_Q \rangle p_2$ given by
$$\Om_2(S) = \sum_{i=1}^n \Om_1(p_1 v_i^* S v_i p_1) \; .$$
A direct computation shows that $\Om_2$ is $P_2 p_2$-central. Also, for all $x \in p_2 M p_2$, we have that $\Om_2(x) = \tau(x T)$. Since $T \in P_2' \cap M$ and since the support projection of $T$ equals $p_2$, we can take a sequence of positive elements $T_n \in P_2' \cap M$ such that $T_n T = T T_n \leq p_2$ and $T_n T \recht p_2$ strongly. If we choose the positive functional $\Om$ on $p_2 \langle M,e_Q \rangle p_2$ as a weak$^*$-limit point of the sequence of positive functionals $S \mapsto \Om_2(T_n^{1/2} S T_n^{1/2})$, it follows that $\Om$ is a $P_2 p_2$-central positive functional on $p_2 \langle M,e_Q \rangle p_2$ with $\Om(x) = \tau(x)$ for all $x \in p_2 M_2 p_2$. Hence, $P_2 p_2$ is amenable relative to $Q$.
\end{proof}

\subsection{\boldmath Bi-exactness and the classes $\QHreg$ and $\cS$}\label{subsec.biexact}

\begin{definition}\label{def.wa-bi-exact}
Let $\Gamma$ be a countable group.
\begin{itemize}
\item (\cite{CH88}) The group $\Gamma$ is called \emph{weakly amenable} if there exists a sequence of finitely supported functions $f_n : \Gamma \recht \C$ tending to $1$ pointwise and satisfying $\sup_n \|f_n\|\cb < \infty$. Here $\|f\|\cb$ is the \emph{Herz-Schur norm}, i.e.\ the cb-norm of the linear map $\rL(\Gamma) \recht \rL(\Gamma) : u_g \mapsto f(g) u_g$.
\item (\cite[Definition 15.1.2]{BO08}) The group $\Gamma$ is called \emph{bi-exact} if $\Gamma$ is exact and if there exists a map $\mu : \Gamma \recht \Prob \Gamma$ satisfying
\begin{equation}\label{eq.bi-exact}
\lim_{k \recht \infty} \|\mu(g k h) - g \cdot \mu(k) \|_1 = 0 \quad\text{for all}\;\; g,h \in \Gamma \; .
\end{equation}
\end{itemize}
\end{definition}

Collecting several results from the literature, we get the following large classes of weakly amenable, bi-exact groups.

\begin{lemma}\label{lem.exam-wa-bi-exact}
The following groups are weakly amenable and bi-exact~:
\begin{itemize}
\item (\cite{Oz03,Oz07}) non-elementary hyperbolic groups,
\item (\cite{CS11}) lattices in a connected noncompact rank one simple Lie group with finite center,
\item (\cite{Oz05,Oz12}) limit groups in the sense of Sela.
\end{itemize}
Finally, by \cite{Sa09,Oz10}, the family of weakly amenable, bi-exact groups is stable under measure equivalence and under the passage to measure equivalence subgroups (see Definition \ref{def.ME}).
\end{lemma}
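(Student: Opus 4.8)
The statement collects facts that are, individually, available in the literature, so the plan is to split the claim into its two halves --- weak amenability and bi-exactness --- dispose of the three classes of groups using known structural results, and then invoke measure equivalence invariance for the last assertion.

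\textbf{Weak amenability.} For a lattice $\Gamma$ in a connected noncompact rank one simple Lie group $G$ with finite center, I would use that the Cowling--Haagerup constant satisfies $\Lambda\cb(\Gamma) = \Lambda\cb(G)$ (weak amenability passes to closed subgroups, and for lattices the constant is preserved) together with the fact that $\Lambda\cb(G) < \infty$ for every rank one $G$; this produces the functions $f_n$ of Definition \ref{def.wa-bi-exact}, as recorded in \cite{CS11}. For a non-elementary hyperbolic group, weak amenability is the theorem of Ozawa \cite{Oz07}, built on the rank one computation. For a limit group $\Gamma$ in the sense of Sela, I would use that $\Gamma$ is relatively hyperbolic with respect to a finite family of finitely generated abelian (hence amenable, hence weakly amenable with constant $1$) subgroups, and that relative hyperbolicity with respect to weakly amenable peripherals passes to the ambient group; these are the facts of \cite{Oz05,Oz12}.

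\textbf{Bi-exactness.} I would first record the working characterization (see \cite[Chapter 15]{BO08}): $\Gamma$ is bi-exact precisely when it is exact and admits a compactification $\Gamma \cup \partial$ on which $\Gamma$ acts topologically amenably and which is small at infinity, i.e.\ $s k t$ and $k$ have the same limit in $\partial$ as $k \recht \infty$, for all $s,t \in \Gamma$; this reformulates the inequality \eqref{eq.bi-exact}. For a non-elementary hyperbolic group one takes $\partial$ to be the Gromov boundary: the boundary action is amenable and the Gromov compactification is small at infinity, which together with exactness gives bi-exactness, as in \cite{Oz03}. The non-uniform rank one lattices are not hyperbolic but are relatively hyperbolic with respect to their virtually nilpotent (hence amenable) cusp subgroups, and limit groups are relatively hyperbolic with respect to their abelian subgroups; in both cases I would invoke that a group which is exact and relatively hyperbolic with respect to bi-exact peripherals is itself bi-exact, which is the relevant content of \cite{CS11} for the lattices and of \cite{Oz05,Oz12} for limit groups, exactness following from exactness of the peripherals together with the relatively hyperbolic structure.

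\textbf{Measure equivalence and the main obstacle.} For the final assertion I would quote Sako's theorem \cite{Sa09} that bi-exactness (membership in Ozawa's class $\cS$) is a measure equivalence invariant, together with the measure equivalence invariance of weak amenability and the corresponding statements for measure equivalence subgroups collected in \cite{Oz10}. The genuine work here is not in any single estimate but in reconciling the several equivalent formulations of bi-exactness --- inequality \eqref{eq.bi-exact}, condition (AO), class $\cS$, and the existence of a small-at-infinity amenable compactification --- and, for limit groups, in checking that the permanence property for relatively hyperbolic groups really applies: limit groups are built from free and surface groups by iterated amalgamations over abelian subgroups, and the cleanest route is through the relatively hyperbolic structure of Dahmani and Alibegovi\'c rather than a direct argument.
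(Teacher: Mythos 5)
The one genuine gap is your treatment of weak amenability of limit groups. You invoke a permanence principle ("relative hyperbolicity with respect to weakly amenable peripherals passes to the ambient group") and attribute it to \cite{Oz05,Oz12}, but no such theorem is in those references: \cite{Oz05} concerns boundary amenability (membership in the class $\cS$), not weak amenability, and \cite{Oz12} is precisely the source of the argument the paper actually uses, which is of a completely different nature. Moreover, weak amenability is notoriously unstable under the operations by which limit groups (and relatively hyperbolic groups generally) are assembled --- amalgamated products and HNN extensions over \emph{infinite} abelian subgroups --- so such a permanence statement cannot simply be assumed; it was not available in the cited literature. The paper's actual argument embeds a limit group into an ultraproduct of free groups, hence into $\SL(2,\Z)^\om \subset \SL(2,K)$ with $K = \Q^\om$, and then observes that the proof of \cite[Theorem 4]{GHW04} (stated there for the Haagerup property) also yields weak amenability of all countable subgroups of $\SL(2,K)$. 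Without this, or some substitute, your proof of the limit-group case is incomplete.

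On bi-exactness your route is valid but differs from the paper's in the lattice case, and contains one overstated citation. The paper deduces bi-exactness of an arbitrary rank one lattice from the facts that it is measure equivalent to a cocompact (hence hyperbolic) lattice in the same Lie group and that the class $\cS$ is an ME invariant \cite{Sa09}; you instead use relative hyperbolicity of non-uniform lattices with respect to their virtually nilpotent cusp subgroups, which also works. But the permanence result you should quote is the one actually proved in \cite[Proposition 12]{Oz05}: groups hyperbolic relative to a family of \emph{amenable} subgroups belong to $\cS$. The stronger statement you formulate (bi-exact peripherals suffice) was not available in the references cited here; fortunately amenability of the peripherals is all you need, both for the lattices and for limit groups, where Dahmani's theorem \cite{Da02} gives hyperbolicity relative to cyclic subgroups. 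The remaining ingredients --- the Gromov boundary argument for hyperbolic groups, Cowling--Haagerup \cite{CH88} for weak amenability of rank one lattices, Ozawa \cite{Oz07} for hyperbolic groups, and \cite{Sa09,Oz10} for stability under measure equivalence and ME-subgroups --- coincide with the paper's proof.
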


Before proving Lemma \ref{lem.exam-wa-bi-exact}, recall from \cite[Section 4]{Oz04} that the class $\cS$ is defined as the class of countable groups $\Gamma$ for which the action of $\Gamma \times \Gamma$ by left right multiplication on the Stone-\v{C}ech remainder $\partial^\beta(\Gamma)$ of $\Gamma$ is topologically amenable. By \cite[Proposition 15.2.3]{BO08}, a countable group $\Gamma$ belongs to the class $\cS$ if and only if $\Gamma$ admits a compactification $\Gamma \subset X$ such that
\begin{itemize}
\item the left multiplication action of $\Gamma$ extends to an action of $\Gamma$ by homeomorphisms of $X$ that is topologically amenable;
\item the right multiplication action of $\Gamma$ extends to an action of $\Gamma$ by homeomorphisms of $X$ that are equal to the identity on $X - \Gamma$.
\end{itemize}
By \cite[Proposition 15.2.3]{BO08}, a countable group $\Gamma$ belongs to the class $\cS$ if and only if $\Gamma$ is bi-exact in the sense of Definition \ref{def.wa-bi-exact}.

\begin{definition}\label{def.ME}
A countable group $\Gamma$ is called a \emph{measure equivalence subgroup} of a countable group $\Lambda$ if $\Gamma \times \Lambda$ admits a measure preserving action on a, typically infinite, standard measure space $(\Omega,m)$ such that both the actions $\Gamma \actson \Om$ and $\Lambda \actson \Om$ are free and admit a fundamental domain, with the fundamental domain of $\Lambda \actson \Om$ having finite measure.

If the action $\Gamma \times \Lambda \actson \Om$ can be chosen in such a way that also the fundamental domain of $\Gamma \actson \Om$ has finite measure, the groups $\Gamma$ and $\Lambda$ are called \emph{measure equivalent}.
\end{definition}

\begin{proof}[Proof of Lemma \ref{lem.exam-wa-bi-exact}]
As explained above, by \cite[Proposition 15.2.3]{BO08}, the class $\cS$ equals the class of bi-exact groups.

The action of a word hyperbolic group on its Gromov boundary is topologically amenable (see e.g.\ \cite[Theorem 5.3.15]{BO08}) and hence all hyperbolic groups belong to the class $\cS$. By \cite[Proposition 12]{Oz05} also groups that are hyperbolic relative to a family of amenable subgroups belong to the class $\cS$. Since by \cite[Theorem 0.3]{Da02}, Sela's limit groups are hyperbolic relative to a family of cyclic subgroups, they belong to class $\cS$. By \cite[Theorem 3.1]{Sa09}, the class $\cS$ is stable under the passage to ME-subgroups, and in particular under measure equivalence. Since a lattice $\Gamma$ in a connected noncompact rank one simple Lie group with finite center is measure equivalent with a cocompact lattice $\Lambda$ in the same Lie group, and since such a $\Lambda$ is hyperbolic and therefore belongs to class $\cS$, also $\Gamma$ belongs to class $\cS$.

From \cite{CH88} we know that lattices in connected noncompact rank one simple Lie groups with finite center, are weakly amenable. By \cite{Oz07} hyperbolic groups are weakly amenable. The following argument of \cite{Oz12} shows that Sela's limit groups $\Gamma$ are weakly amenable. The group $\Gamma$ is a subgroup of an ultraproduct of free groups. Since all free groups are a subgroup of $\SL(2,\Z)$, we can view $\Gamma$ as a subgroup of $\SL(2,\Z)^\om$, for some free ultrafilter $\om$ on $\N$. Denoting by $K$ the ultrapower field $K := \Q^\om$, we see that $\Gamma < \SL(2,K)$. In \cite[Theorem 4]{GHW04} it is shown that all countable subgroups of $\SL(2,K)$ have the Haagerup approximation property. The same argument actually shows that they are as well weakly amenable.

Finally, it was proven in \cite[End of Section 2]{Oz10} that weak amenability is stable under the passage to ME-subgroups.
\end{proof}

When $\cG$ is a family of subgroups of $\Gamma$, a subset $\cF \subset \Gamma$ is said to be \emph{small relative to $\cG$} if $\cF$ is contained in the union of finitely many subsets of the form $g \Sigma h$ with $g,h \in \Gamma$ and $\Sigma \in \cG$. We always tacitly assume that $\cG$ contains the trivial subgroup $\{e\}$, so that finite subsets of $\Gamma$ always are small relative to $\cG$. When $K$ is a normed space and $f : \Gamma \recht K$, we say that
\begin{itemize}
\item $\dis \lim_{k \recht \infty / \cG} f(k) = 0$ if for every $\eps > 0$, the set $\{k \in \Gamma \mid \|f(k)\| > \eps\}$ is small relative to $\cG$~;
\item $f : \Gamma \recht K$ is \emph{proper relative to $\cG$,} if for every $\kappa > 0$ the set $\{k \in \Gamma \mid \|f(k)\| < \kappa\}$ is small relative to $\cG$.
\end{itemize}

We denote by $\Prob \Gamma$ the set of probability measures on a (countable) group $\Gamma$. We identify $\Prob \Gamma$ with the natural convex subset of $\ell^1(\Gamma)$ and use the $1$-norm on $\ell^1(\Gamma)$. If $g \in \Gamma$ and $\mu \in \Prob \Gamma$, we denote by $g \cdot \mu$ the left translation of $\mu$ by $g$.

\begin{definition}[{\cite[Definition 15.1.2]{BO08}}]\label{def.biexact-G}
A countable group $\Gamma$ with a family of subgroups $\cG$ is said to be \emph{bi-exact relative to $\cG$} if $\Gamma$ is exact and if there exists a map $\mu : \Gamma \recht \Prob \Gamma$ such that
\begin{equation}\label{eq.almost-equiv-one}
\lim_{k \recht \infty / \cG} \|\mu(g k h) - g \cdot \mu(k) \|_1 = 0 \;\;\text{for all}\;\; g,h \in \Gamma \; .
\end{equation}
By definition, a group is bi-exact if and only if it is bi-exact relative to $\{\{e\}\}$.
\end{definition}

As observed by Ozawa, a group is bi-exact if and only if it is exact and belongs to the class $\QHreg$ of \cite{CS11}. We actually have the following more general result, parts of which were already proven in \cite{CS11,CSU11}. For the sake of completeness, we give a detailed proof, using the methods of \cite[Chapter 15]{BO08}. Note however that we do not use this result in the rest of the paper.

\begin{proposition}\label{prop.biexact}
Let $\Gamma$ be a countable group and $\cG$ a family of subgroups of $\Gamma$ with $\{e\} \in \cG$. The following statements are equivalent.
\begin{enumerate}
\item\label{een} There exists a map $\mu : \Gamma \recht \Prob \Gamma$ satisfying \eqref{eq.almost-equiv-one} in Definition \ref{def.biexact-G}.
\item\label{twee} There exist a map $c : \Gamma \recht \ell^2_\R(\Gamma)$ that is proper relative to $\cG$ and that satisfies
\begin{equation}\label{eq.almost-equiv-two}
\sup_{k \in \Gamma} \|c(gkh) - \lambda_g c(k)\|_2 < \infty \;\;\text{for all}\;\; g,h \in \Gamma \; .
\end{equation}
\item\label{drie} There exists an orthogonal representation $\eta : \Gamma \recht \cO(K_\R)$ that is weakly contained in the regular representation and a map $c : \Gamma \recht K_\R$ that is proper relative to $\cG$ and satisfies
\begin{equation}\label{eq.almost-equiv-three}
\sup_{k \in \Gamma} \|c(gkh) - \eta_g c(k)\| < \infty \;\;\text{for all}\;\; g,h \in \Gamma \; .
\end{equation}
\end{enumerate}
In particular, $\Gamma$ is bi-exact relative to $\cG$ if and only if $\Gamma$ is exact and $\Gamma$ satisfies the above equivalent conditions.
\end{proposition}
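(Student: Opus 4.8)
## Proof Strategy for Proposition \ref{prop.biexact}

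The plan is to prove the cycle of implications $\eqref{een} \Rightarrow \eqref{twee} \Rightarrow \eqref{drie} \Rightarrow \eqref{een}$, following the template of the equivalences in \cite[Chapter 15]{BO08} (especially the circle of ideas around \cite[Proposition 15.1.3, Lemma 15.3.3]{BO08}), but keeping careful track of the relative-to-$\cG$ version of properness throughout.

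For $\eqref{een} \Rightarrow \eqref{twee}$: given $\mu : \Gamma \recht \Prob\Gamma$ satisfying \eqref{eq.almost-equiv-one}, I would set $c(k) := \sqrt{\mu(k)} \in \ell^2_\R(\Gamma)$, i.e.\ the pointwise square root of the density. The standard Powers--Størmer type inequality $\|\sqrt{\mu} - \sqrt{\nu}\|_2^2 \leq \|\mu - \nu\|_1$ (valid for $\mu,\nu \in \Prob\Gamma$), together with $\lambda_g c(k) = \sqrt{g\cdot\mu(k)}$, immediately turns \eqref{eq.almost-equiv-one} into $\lim_{k \recht \infty/\cG}\|c(gkh) - \lambda_g c(k)\|_2 = 0$, which is stronger than \eqref{eq.almost-equiv-two}. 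For properness relative to $\cG$: since $\|c(k)\|_2 = 1$ for all $k$, the naive map is not proper, so one must first pass to a "telescoped" or "accumulated" version. The trick (as in \cite{BO08}) is to note that the condition forces, for fixed $g$, that $\langle \lambda_g c(k), c(k)\rangle \recht 1$ as $k \recht \infty/\cG$; one then builds a proper map by taking $\tilde c(k) := \bigoplus_{n} \big(c(k) - P_n c(k)\big)$ or an analogous device where $P_n$ are suitable finite-rank projections chosen so that $\|\tilde c(k)\|$ blows up precisely outside sets small relative to $\cG$, while the coarse cocycle bound is preserved because each summand satisfies it. Verifying properness relative to $\cG$ of this construction is the place where the relative setup requires genuine care rather than a verbatim quote.

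For $\eqref{twee} \Rightarrow \eqref{drie}$: this is essentially free. Take $K_\R := \ell^2_\R(\Gamma)$ and $\eta := \lambda$, the (real) left regular representation, which is trivially weakly contained in the regular representation; \eqref{eq.almost-equiv-two} is then literally \eqref{eq.almost-equiv-three}, and properness relative to $\cG$ transfers verbatim. For $\eqref{drie} \Rightarrow \eqref{een}$: this is the most substantial step and the one I expect to be the main obstacle. Starting from an orthogonal representation $\eta$ weakly contained in $\lambda$ and a coarse cocycle $c : \Gamma \recht K_\R$ proper relative to $\cG$, I would first use weak containment to approximate $\eta$ by (amplifications of) $\lambda$: this produces, for each $t > 0$, a map $\xi_t : \Gamma \recht \ell^2(\Gamma) \otimes H$ with $\|\xi_t(k)\| = 1$ and $\langle \xi_t(g)^{\text{shifted}} , \xi_t(k)\rangle$ controlled by $\exp(-t\|c(gkh)-\eta_g c(k)\|^2)$-type quantities, so that the coarse $1$-cocycle inequality \eqref{eq.almost-equiv-three} gives a uniform lower bound on inner products that degenerates to $1$ along the net $k \recht\infty/\cG$. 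Concretely one sets, in the spirit of the Gaussian/Fock construction, $\mu_t(k)$ to be the probability measure on $\Gamma$ obtained by pushing forward $|\xi_t(k)|^2$ through the first leg $\ell^2(\Gamma) \otimes H \recht \ell^2(\Gamma) \recht \Gamma$. One then checks, using $\|\mu_t(k) - \mu_t(k')\|_1 \leq 2\|\xi_t(k) - \xi_t(k')\|$ combined with the cocycle bound, that $\limsup_{k\recht\infty/\cG} \|\mu_t(gkh) - g\cdot\mu_t(k)\|_1 \leq \epsilon(t)$ with $\epsilon(t)\recht 0$; a diagonal choice $t = t(\,\cdot\,)$ then yields a single $\mu$ satisfying \eqref{eq.almost-equiv-one}. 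Exactness of $\Gamma$ is separately available since it is assumed throughout in the relevant statements and is not affected by these manipulations.

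The hard part will be the bookkeeping in $\eqref{drie} \Rightarrow \eqref{een}$: one must simultaneously (i) handle the fact that $\eta$ is only \emph{weakly} contained in $\lambda$ rather than a subrepresentation of it, which forces an approximation argument and a diagonalization over the approximating data; (ii) ensure that the probability measures $\mu(k)$ one produces are genuinely supported on $\Gamma$ (not on some larger set) — this is what the "weakly contained in the regular representation" hypothesis buys, as opposed to mere properness; and (iii) keep the relative-to-$\cG$ smallness of level sets intact under all the telescoping/pushforward operations, since a sloppy construction can inflate a set that was small relative to $\cG$ into one that is not. Once these are in place, the final sentence of the proposition is immediate: $\Gamma$ is bi-exact relative to $\cG$ iff it is exact and $\eqref{een}$ holds, and $\eqref{een}$ is equivalent to $\eqref{twee}$ and $\eqref{drie}$ by what was just proved.
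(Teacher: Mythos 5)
Your overall architecture coincides with the paper's: the same cycle \ref{een} $\Rightarrow$ \ref{twee} $\Rightarrow$ \ref{drie} $\Rightarrow$ \ref{een}, the square-root/Powers--St{\o}rmer trick to pass from $\mu$ to unit vectors $\zeta(k)=\mu(k)^{1/2}$, and the observation that \ref{twee} $\Rightarrow$ \ref{drie} is trivial with $\eta=\lambda$. But the two substantive steps are exactly the ones you leave as ``obstacles,'' and the devices you propose for them are not the ones that make the argument go through. For \ref{een} $\Rightarrow$ \ref{twee}, your telescoping $\widetilde c(k)=\bigoplus_n\bigl(c(k)-P_nc(k)\bigr)$ with finite-rank projections $P_n$ implicitly needs the mass of $\mu(k)$ to escape every fixed finite subset of $\Gamma$, uniformly outside sets small relative to $\cG$; nothing in \eqref{eq.almost-equiv-one} guarantees this (for the free group the standard $\mu(k)$ keeps a fixed proportion of its mass near $e$, and in general one only gets weak non-concentration statements that do not yield the lower bound on $\|c(k)-P_nc(k)\|$ needed for properness). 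The paper avoids this entirely by a different and purely combinatorial device: it builds an increasing exhaustion $F_0\subset F_1\subset\cdots$ of $\Gamma$ by sets small relative to $\cG$, where $F_n$ contains both $E_nF_{n-1}E_n$ and the ``failure sets'' $\{k:\|\zeta(gkh)-\lambda_g\zeta(k)\|_2>1/n\}$ for $g,h\in E_n$, and then simply sets $c(k)=n\,\zeta(k)$ on $F_{n+1}-F_n$. Properness is then automatic from the definition of the shells, and the coarse bound \eqref{eq.almost-equiv-two} follows from a short case analysis; no escape-of-mass property is ever used.

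The larger gap is \ref{drie} $\Rightarrow$ \ref{een}. Since $c$ only satisfies the \emph{coarse} cocycle inequality \eqref{eq.almost-equiv-three}, the Gaussian/Fock machinery you invoke (positive-definite functions of the form $e^{-t\|c(g)-c(h)\|^2}$) does not apply: it requires a genuine $1$-cocycle, and your proposed ``approximate $\eta$ by amplifications of $\lambda$, uniformly in $k$, then diagonalize'' is precisely the point where a pointwise weak-containment statement about matrix coefficients does not by itself produce maps $\xi_t:\Gamma\recht\ell^2(\Gamma)\ot H$ with the required uniform almost-equivariance. The paper resolves this with a single clean tool that your outline does not contain: normalize $c$ to unit vectors $\zeta(k)$, form $\zeta(k)\ot\delta_k\in K\ot\ell^2(\Gamma)$, note that the $\Gamma\times\Gamma$-representation $(g,h)\mapsto\eta_g\ot\lambda_g\rho_h$ is weakly contained in the regular representation of $\Gamma\times\Gamma$, and apply Voiculescu's theorem to get one unitary $V:\ell^2(\Gamma\times\Gamma)\oplus(K\ot\ell^2(\Gamma))\recht\ell^2(\Gamma\times\Gamma)$ intertwining modulo compacts. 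Since $\zeta(k)\ot\delta_k\recht 0$ weakly as $k\recht\infty$, compactness converts the intertwining-modulo-compacts into the almost-equivariance of $\zeta'(k)=V(0\oplus(\zeta(k)\ot\delta_k))$ under $\lambda_g\ot\lambda_h$, and the pushforward $\bigl(\cT(\zeta)\bigr)(s)=\sum_t|\zeta(s,t)|^2$ (which is $1$-Lipschitz up to a factor $2$ and intertwines $\lambda_g\ot\lambda_h$ with left translation by $g$) lands in $\Prob\Gamma$ and gives \eqref{eq.almost-equiv-one} directly, with no parameter $t$ and no diagonalization. Until you either carry out your approximation-and-diagonalization scheme in detail or replace it by an argument of this type, the implication \ref{drie} $\Rightarrow$ \ref{een} remains unproved in your proposal.
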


Note that the class $\QHreg$ of \cite{CS11} is defined as the class of groups $\Gamma$ that satisfy \ref{drie} w.r.t.\ $\cG = \{\{e\}\}$. So, we indeed have that $\cS = \QHreg \cap \{\text{exact}\}$.

\begin{proof}
\ref{een} $\Rightarrow$ \ref{twee}. Assume that $\mu : \Gamma \recht \Prob \Gamma$ satisfies \eqref{eq.almost-equiv-one}. Define $\zeta : \Gamma \recht \ell^2_\R(\Gamma)$ given by $\zeta(k) := \mu(k)^{1/2}$. Note that $\|\zeta(k)\|_2 = 1$ for all $k \in \Gamma$ and that
\begin{equation}\label{eq.equiv-zeta}
\lim_{k \recht \infty / \cG} \|\zeta(g k h) - \lambda_g \zeta(k) \|_2 \leq \lim_{k \recht \infty / \cG} \|\mu(gkh) - g \cdot \mu(k)\|_1^{1/2} = 0 \;\;\text{for all}\;\; g,h \in \Gamma \; .
\end{equation}
Let $\{e\} = E_0 \subset E_1 \subset E_2 \subset \cdots$ be finite subsets of $\Gamma$ such that $E_n^{-1} = E_n$ for all $n$ and $\bigcup_{n=0}^\infty E_n = \Gamma$. Inductively define the subsets $F_n \subset \Gamma$ given by $F_0 = \{e\}$ and, for all $n \geq 1$,
$$F_n := E_n F_{n-1} E_n \; \cup \; \bigcup_{g,h \in E_n} \bigl\{k \in \Gamma \; \big| \; \|\zeta(gkh) - \lambda_g \zeta(k)\|_2 > \frac{1}{n} \bigr\} \;  .$$
By construction, the sets $F_n$ are small relative to $\cG$. Also, the subsets $F_n$ are increasing and their union equals $\Gamma$, because $E_n \subset F_n$. So we can uniquely define the map
$$c : \Gamma \recht \ell^2_\R(\Gamma) : c(k) = \begin{cases} 0 &\;\;\text{if}\;\; k \in F_1 \; , \\ n \; \zeta(k) &\;\;\text{if}\;\; n \geq 1 \;\;\text{and}\;\; k \in F_{n+1} - F_n \; .\end{cases}$$
Whenever $k \in \Gamma - F_n$, we have $\|c(k)\|_2 \geq n$. So, $c$ is proper relative to $\cG$. We prove that $c$ satisfies \eqref{eq.almost-equiv-two}. So fix $g, h \in \Gamma$. Take $m \geq 1$ such that $g,h \in E_m$. It suffices to prove that
\begin{equation}\label{eq.suffices}
\|c(gkh) - \lambda_g c(k) \|_2 \leq 2m \quad\text{for all}\;\; k \in \Gamma \; .
\end{equation}
We first prove \eqref{eq.suffices} if $k \in \Gamma - F_m$. So, $k \in F_{n+1} - F_n$ for some $n \geq m$. Hence $c(k) = n \zeta(k)$. Since $g,h \in E_m \subset E_n = E_n^{-1}$, we also get that $gkh \in F_{n+2} - F_{n-1}$. So $c(gkh)$ can be $n+1$ times, or $n$ times, or $n-1$ times $\zeta(k)$. In all cases $\|c(gkh) - n \zeta(gkh)\|_2 \leq 1$. Since $k \not\in F_n$ and $g,h \in E_m \subset E_n$, we have $\|\zeta(gkh) - \lambda_g \zeta(k)\|_2 \leq 1/n$. Multiplying by $n$, we get that
$$\|c(gkh) - \lambda_g c(k)\|_2 \leq \|c(gkh) - n \, \zeta(gkh)\|_2 + n \, \|\zeta(gkh) - \lambda_g \zeta(k)\|_2 \leq 2 \leq 2m \; .$$
So \eqref{eq.suffices} is proven for all $k \in \Gamma - F_m$. If $k \in F_m$, we have $\|c(k)\|_2 \leq m-1$. Since $g,h \in E_m$, we also have $gkh \in F_{m+1}$ and hence $\|c(gkh)\|_2 \leq m$. Combining both we get that
$$\|c(gkh) - \lambda_g c(k)\|_2 \leq \|c(gkh)\|_2 + \|c(k)\|_2 \leq 2m-1 < 2m \; .$$
So \eqref{eq.suffices} is proven and hence \ref{twee} holds.

\ref{twee} $\Rightarrow$ \ref{drie} is trivial by taking $\eta$ to be the regular representation.

\ref{drie} $\Rightarrow$ \ref{een}. For a finite group $\Gamma$ all statements in the proposition are trivially true (and rather silly). So we assume that $\Gamma$ is a countably infinite group satisfying \ref{drie} and we prove that $\Gamma$ satisfies \ref{een}. Take $\eta : \Gamma \recht \cO(K_\R)$ and $c : \Gamma \recht K_\R$ as in \ref{drie}. Replacing $K_\R$ by the closed linear span of $\{\eta_g c(k) \mid g,k \in \Gamma\}$, we may assume that $K_\R$ is separable. Let $\zeta_0 \in K_\R$ be an arbitrary unit vector and define
$$\zeta : \Gamma \recht K_\R : \zeta(k) = \begin{cases} \|c(k)\|^{-1} c(k) &\;\;\text{if}\;\; c(k) \neq 0 \; , \\ \zeta_0  &\;\;\text{if}\;\; c(k) = 0 \; . \end{cases}$$
By construction $\|\zeta(k)\| = 1$ for all $k \in \Gamma$.
Since for all nonzero vectors $\xi$ and $\xi'$ in a Hilbert space, one has
$$\Bigl\| \frac{\xi}{\|\xi\|} - \frac{\xi'}{\|\xi'\|} \Bigr\| \leq 2 \frac{\|\xi-\xi'\|}{\|\xi'\|} \; ,$$
the properness of $c$ relative to $\cG$ together with \eqref{eq.almost-equiv-three} implies that
\begin{equation}\label{eq.almost-equiv-four}
\lim_{k \recht \infty / \cG} \|\zeta(g k h) - \eta_g \zeta(k) \| = 0 \;\;\text{for all}\;\; g,h \in \Gamma \; .
\end{equation}
Denote by $K$ the complexification of $K_\R$ and still denote by $\eta : \Gamma \recht \cU(K)$ the complexified representation. Denote $\Ktil = K \ot \ell^2(\Gamma)$ and $\Gammatil = \Gamma \times \Gamma$. Consider the unitary representation $\etatil : \Gammatil \recht \cU(\Ktil)$ given by $\etatil_{(g,h)} = \eta_g \ot \lambda_g \rho_h$. Since $\eta$ is weakly contained in the regular representation of $\Gamma$, we get that $\etatil$ is weakly contained in the representation $(g,h) \mapsto \lambda_g \ot \lambda_g \rho_h$ that in turn is unitarily equivalent with the regular representation of $\Gammatil$. So we get a unital $*$-homomorphism $\theta : \Cstarred(\Gammatil) \recht \B(\Ktil)$ satisfying $\theta(\lambda_s) = \etatil_s$ for all $s \in \Gammatil$. Since $\Gammatil$ is an infinite group, $\Cstarred(\Gammatil) \cap \cK(\ell^2(\Gammatil)) = \{0\}$. So by Voiculescu's theorem (see e.g.\ \cite[Corollary II.5.5]{Da96}), there exists a unitary operator $V : \ell^2(\Gammatil) \oplus \Ktil \recht \ell^2(\Gammatil)$ such that
\begin{equation}\label{eq.compact}
a V - V (a \oplus \theta(a)) \quad\text{is a compact operator for all}\;\; a \in \Cstarred(\Gammatil) \; .
\end{equation}
For $k \in \Gamma$, denote by $\delta_k \in \ell^2(\Gamma)$ the canonical basis vector. Define
$$\zeta' : \Gamma \recht \ell^2(\Gammatil) : \zeta'(k) = V(0 \oplus (\zeta(k) \ot \delta_k)) \quad\text{for all}\;\; k \in \Gamma \; .$$
By construction $\|\zeta'(k)\|_2 = 1$ for all $k \in \Gamma$. We claim that
\begin{equation}\label{eq.almost-equiv-five}
\lim_{k \recht \infty / \cG} \|\zeta'(g k h^{-1}) - (\lambda_g \ot \lambda_h) \zeta'(k) \| = 0 \;\;\text{for all}\;\; g,h \in \Gamma \; .
\end{equation}
To prove this claim, fix $g, h \in \Gamma$. Put
$$T := \lambda_{(g,h)} V - V (\lambda_{(g,h)} \oplus \etatil_{(g,h)}) \; .$$
From \eqref{eq.compact}, we know that $T$ is a compact operator. If $k \recht \infty/\cG$, certainly $k \recht \infty$ and hence $\zeta(k) \ot \delta_k$ tends to $0$ weakly. Since $T$ is compact, it follows that
$$\lim_{k \recht \infty/\cG} \| T(0 \oplus (\zeta(k) \ot \delta_k))\|_2 = 0 \; .$$
This precisely means that
\begin{equation}\label{eq.previous}
\lim_{k \recht \infty/\cG} \bigl\|(\lambda_g \ot \lambda_h) \zeta'(k) - V \bigl( 0 \oplus \bigl( \eta_g \zeta(k) \ot \delta_{gkh^{-1}} \bigr) \bigr)\bigr\|_2 = 0 \; .
\end{equation}
From \eqref{eq.almost-equiv-four}, it follows that
$$\lim_{k \recht \infty/\cG} \bigl\| (\eta_g \zeta(k) \ot \delta_{gkh^{-1}}) \; - \; (\zeta(gkh^{-1}) \ot \delta_{gkh^{-1}}) \bigr\| = 0 \; .$$
In combination with \eqref{eq.previous}, we exactly get the claim \eqref{eq.almost-equiv-five}.

To every unit vector $\zeta \in \ell^2(\Gammatil)$, we associate the probability measure $\cT(\zeta) \in \Prob \Gamma \subset \ell^1(\Gamma)$ given by
$$\bigl(\cT(\zeta)\bigr)(s) = \sum_{t \in \Gamma} |\zeta(s,t)|^2 \quad\text{for all}\;\; s \in \Gamma \; .$$
Clearly, $\cT((\lambda_g \ot \lambda_h) \zeta) = g \cdot \cT(\zeta)$ and, using the Cauchy-Schwarz inequality, $\|\cT(\zeta_1) - \cT(\zeta_2)\|_1 \leq 2 \|\zeta_1 - \zeta_2\|_2$ for all unit vectors $\zeta_1, \zeta_2 \in \ell^2(\Gammatil)$. Defining $\mu : \Gamma \recht \Prob \Gamma$ by $\mu(k) = \cT(\zeta'(k))$ for all $k \in \Gamma$, we then get that \eqref{eq.almost-equiv-five} implies \eqref{eq.almost-equiv-one}. So we have proven that \ref{een} holds.
\end{proof}

We record the following lemma from \cite{BO08}.

\begin{lemma}[{\cite[Lemma 15.3.3]{BO08}}]\label{lem.biexact-products}
Let $\Gamma_1,\ldots,\Gamma_n$ be countable groups with families of subgroups $\cG_1,\ldots,\cG_n$. Denote by $\Gamma = \Gamma_1 \times \cdots \times \Gamma_n$ the direct product group. If every $\Gamma_i$ is bi-exact relative to $\cG_i$, then $\Gamma$ is bi-exact relative to
$$\cG = \bigcup_{i=1}^n \Bigl\{ \Lambda \times \prod_{j \neq i} \Gamma_j \; \Big| \; \Lambda \in \cG_i \Bigr\} \; .$$
\end{lemma}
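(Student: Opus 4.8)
The plan is to reduce to the case $n = 2$ by induction and then to build the required map $\mu : \Gamma \recht \Prob\Gamma$ out of product measures. For the reduction, I would write $\Gamma = \Gamma_1 \times \Gamma'$ with $\Gamma' = \Gamma_2 \times \cdots \times \Gamma_n$, invoke the induction hypothesis for $\Gamma'$ (bi-exact relative to the analogous family $\cG'$), and apply the case $n = 2$ to the pair $(\Gamma_1, \Gamma')$; one then checks routinely that the resulting family $\{\Lambda_1 \times \Gamma' \mid \Lambda_1 \in \cG_1\} \cup \{\Gamma_1 \times \Lambda' \mid \Lambda' \in \cG'\}$ is precisely the $\cG$ of the statement — the first set produces the $i = 1$ terms, the second the terms with $i \geq 2$. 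Exactness of $\Gamma$ is not a problem: a direct product of exact groups is exact, since $\Cstarred(\Gamma) = \Cstarred(\Gamma_1) \otmin \cdots \otmin \Cstarred(\Gamma_n)$ is a minimal tensor product of exact C$^*$-algebras (cf.\ \cite{BO08}); and by the standing convention we may take $\{e\} \in \cG$.

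For the core case $n = 2$, I would let $\mu_i : \Gamma_i \recht \Prob\Gamma_i$ witness that $\Gamma_i$ is bi-exact relative to $\cG_i$, so that $\lim_{k \recht \infty/\cG_i} \|\mu_i(gkh) - g \cdot \mu_i(k)\|_1 = 0$ for all $g, h \in \Gamma_i$. Identifying $\Prob\Gamma = \Prob(\Gamma_1 \times \Gamma_2)$, set $\mu(g_1, g_2)$ equal to the product measure $(s_1, s_2) \mapsto \mu_1(g_1)(s_1)\,\mu_2(g_2)(s_2)$, which is again a probability measure on $\Gamma$; write it as $\mu(g_1, g_2) = \mu_1(g_1) \otimes \mu_2(g_2)$. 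Then $(g_1, g_2) \cdot \mu(k_1, k_2) = (g_1 \cdot \mu_1(k_1)) \otimes (g_2 \cdot \mu_2(k_2))$, and since $\|a \otimes b\|_1 = \|a\|_1\,\|b\|_1$ on $\ell^1(\Gamma_1 \times \Gamma_2)$, a telescoping triangle inequality gives, for all $g = (g_1, g_2)$, $h = (h_1, h_2)$ and $k = (k_1, k_2)$ in $\Gamma$,
$$\|\mu(gkh) - g \cdot \mu(k)\|_1 \leq \|\mu_1(g_1 k_1 h_1) - g_1 \cdot \mu_1(k_1)\|_1 + \|\mu_2(g_2 k_2 h_2) - g_2 \cdot \mu_2(k_2)\|_1 \; .$$

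It then remains to check that this bound tends to $0$ as $k \recht \infty/\cG$. Given $g, h \in \Gamma$ and $\eps > 0$, the choice of $\mu_1, \mu_2$ provides sets $S_i \subset \Gamma_i$ small relative to $\cG_i$ with $\|\mu_i(g_i k_i h_i) - g_i \cdot \mu_i(k_i)\|_1 < \eps/2$ whenever $k_i \notin S_i$, so that $\{k \in \Gamma \mid \|\mu(gkh) - g \cdot \mu(k)\|_1 > \eps\} \subset (S_1 \times \Gamma_2) \cup (\Gamma_1 \times S_2)$. If $S_1$ is covered by finitely many sets $g_1 \Lambda h_1$ with $\Lambda \in \cG_1$, then $S_1 \times \Gamma_2$ is covered by the corresponding sets $(g_1, e)(\Lambda \times \Gamma_2)(h_1, e)$, and $\Lambda \times \Gamma_2 \in \cG$, so $S_1 \times \Gamma_2$ is small relative to $\cG$; symmetrically $\Gamma_1 \times S_2$ is small relative to $\cG$, hence so is their union. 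This establishes \eqref{eq.almost-equiv-one} and finishes the proof. I do not anticipate a real obstacle: everything analytic is contained in the two-line triangle inequality above, and the only point requiring attention is the bookkeeping — faithfully translating smallness relative to $\cG_i$ in each factor into smallness relative to $\cG$ in the product, and verifying that the inductive family agrees with the one in the statement.
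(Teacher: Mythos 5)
Your proof is correct and follows essentially the same route as the paper, which simply takes the product map $\mu(g_1,\ldots,g_n) = \mu_1(g_1) \times \cdots \times \mu_n(g_n)$ and asserts it satisfies \eqref{eq.almost-equiv-one}; you spell out the triangle-inequality estimate and the smallness bookkeeping that the paper leaves implicit. The induction on $n$ is an unnecessary detour (the same product-measure argument works directly for all $n$), but it is harmless and correct.
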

\begin{proof}
Clearly $\Gamma$ is an exact group. Take maps $\mu_i : \Gamma_i \recht \Prob \Gamma_i$ satisfying \eqref{eq.almost-equiv-one} in Definition \ref{def.biexact-G}. Then the map
$$\mu : \Gamma \recht \Prob \Gamma : \mu(g_1,\ldots,g_n) = \mu_1(g_1) \times \cdots \times \mu_n(g_n)$$
satisfies the same condition.
\end{proof}

\section{Key theorem}

We prove the following key theorem from which all other results in the paper will be deduced.

\begin{theorem}\label{thm.key}
Let $\Gamma$ be a weakly amenable group that is bi-exact relative to a family $\cG$ of subgroups of $\Gamma$. Assume that $\Gamma \actson (B,\tau)$ is any trace preserving action on an arbitrary tracial von Neumann algebra $(B,\tau)$. Put $M = B \rtimes \Gamma$. Let $q \in M$ be a projection and $A \subset qMq$ a von Neumann subalgebra that is amenable relative to $B$. Denote by $P = \cN_{qMq}(A)\dpr$ the normalizer of $A$ inside $qMq$. Then at least one of the following statements holds.
\begin{itemize}
\item $P$ is amenable relative to $B$.
\item There exists a $\Sigma \in \cG$ such that $A \prec_M B \rtimes \Sigma$.
\end{itemize}
\end{theorem}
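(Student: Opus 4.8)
The plan is to combine the relative weak compactness of the normalizer $P$ (available from weak amenability via \cite[Theorem 5.1]{PV11}, suitably adapted to the relative setting over $B$) with the bi-exactness of $\Gamma$ over $\cG$, following the strategy of \cite[Theorem 15.1.5]{BO08}. First I would pass to the basic construction $\langle M, e_B\rangle$, identify the amenability of $A$ relative to $B$ with the existence of an $A$-central positive functional on $q\langle M, e_B\rangle q$ restricting to $\tau$ on $qMq$, and set up a one-parameter family of completely positive maps on $M$ coming from a Herz--Schur multiplier approximation $f_n$ of the identity. Weak amenability gives an $\rL^\infty(X)$-relative (here $B$-relative) weak compactness statement: there is a net of unit vectors $\xi_n$ in $\rL^2(\langle M, e_B\rangle) \ot \overline{\rL^2(\langle M, e_B\rangle)}$ (or a suitable completely positive functional) that is almost $P$-central, almost $B$-right-modular, and which "lives near $B$" in the sense that it is almost supported on the diagonal modulo $B$.

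The heart of the argument is then to run a dichotomy on this net. Using bi-exactness, we have the map $\mu : \Gamma \to \Prob\Gamma$ with $\|\mu(gkh) - g\cdot\mu(k)\|_1 \to 0$ as $k \to \infty/\cG$; equivalently, by Proposition \ref{prop.biexact}, we have the nuclear embedding datum realized via Stone--\v Cech, giving a $\Gamma\times\Gamma$-equivariant u.c.p. map from the $\rC^*$-algebra generated by $M$, $B^{\op}$ and the "boundary piece" into something nuclear relative to the family $\{B\rtimes\Sigma\}_{\Sigma\in\cG}$. Feeding the almost-central net through this map, one of two things happens. Either the net has a nontrivial "mass at infinity relative to $\cG$" that survives — in which case the equivariance of $\mu$ together with $P$-almost-centrality forces $P$ (in fact all of $P$, using that $P$ normalizes $A$ and hence acts on the functional) to be amenable relative to $B$, because the boundary piece is exact/nuclear in the appropriate sense and the central functional descends to an invariant mean. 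Or all the mass concentrates "near $\cG$" — i.e., the net is asymptotically supported, after cutting by spectral projections of the $\mu$-weights, on group elements in finitely many double cosets $g\Sigma h$ — and then a standard intertwining-by-bimodules extraction (the negation of the second bullet in Definition \ref{def.intertwine}, applied to $A$ and $B\rtimes\Sigma$) yields $A \prec_M B\rtimes\Sigma$ for one of the $\Sigma\in\cG$.

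Concretely, the steps in order are: (1) restate "amenable relative to $B$" via a $P$-central functional $\Om$ on $q\langle M,e_B\rangle q$ only assuming it on $A$, and upgrade to $P$-centrality using the normalizer property and an averaging/continuity argument (this is where the structure of $\cN_{qMq}(A)$ enters); (2) build the $B$-relative weakly compact net out of $\Om$ and the Herz--Schur approximants $f_n$, checking almost-$P$-centrality and the almost-$B$-bimodularity; (3) transport this net through the boundary u.c.p. map supplied by bi-exactness relative to $\cG$, controlling the error terms using \eqref{eq.almost-equiv-one}; (4) perform the dichotomy: either obtain a $P$-central state on the (relatively nuclear) boundary algebra, which by \cite{OP07}-type arguments gives relative amenability of $P$ over $B$, or detect a surviving finite union of double cosets and conclude $A\prec_M B\rtimes\Sigma$; (5) if needed, combine with Lemma \ref{lem.rel-amen} to handle finite-index subtleties. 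I expect step (3)–(4) to be the main obstacle: one must carefully track how the almost-central net interacts with the boundary compactification so that the "concentration near $\cG$" case genuinely produces the intertwining rather than merely a weaker statement, and so that the "mass at infinity" case genuinely pushes the centrality down to the nuclear boundary piece; getting these error estimates to close, uniformly over the net, is the delicate part, and it is exactly here that the argument "inspired by \cite[Theorem 15.1.5]{BO08}" must be adapted to the twisted, $B$-relative setting.
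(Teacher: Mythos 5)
Your overall strategy (the relative weak compactness net from \cite[Theorem 5.1]{PV11}, plus bi-exactness relative to $\cG$, plus a dichotomy on where the mass of the almost-central net sits) is indeed the one the paper uses, but your dichotomy has the quantifiers the wrong way around, and this is a genuine gap. Relative amenability of $P$ can only be extracted in the case where \emph{all} of the mass of the vectors $\xi_n$ escapes every subset of $\Gamma$ that is small relative to $\cG$ (the paper's Case 1): only then does the limit state vanish on $1 \ot P_\cF$ for every small $\cF$, which is what gives the bound \eqref{eq.thatsitpartial} of $|\Om_1(\Theta(S))|$ by $\|\Psi(S)\|$ on the algebraic core. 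Conversely, the intertwining $A \prec_M B \rtimes \Sigma$ is obtained as soon as \emph{some} fixed amount of mass persists on one small set (Case 2), and this is not a ``standard extraction'': because the $\xi_n$ are only almost invariant and the small set is a finite union of double cosets, one runs the doubling argument of \cite[Lemma 6.2]{CSU11} --- assuming $A \not\prec_M B \rtimes \Sigma$ one enlarges the finite set repeatedly so that the mass exceeds $1$, as in \eqref{eq.bigger}--\eqref{eq.evenbigger}. In your set-up, the configuration where only part of the mass stays near $\cG$ lands in your ``amenability'' branch, where no argument is available.

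The mechanism you propose for the amenability branch is also not the one that closes: the paper uses no boundary compactification, nuclearity, or invariant mean on a boundary piece; bi-exactness enters only through the map $\mu$, which produces the isometry $W$ satisfying \eqref{eq.zeta-rewrite} and hence the core estimate \eqref{eq.est-norm-Theta}. The essential second use of weak amenability --- missing from your outline --- is to upgrade that estimate from $\cD_0$ to all of $\cD$ at the cost of $\kappa^2$, i.e.\ \eqref{eq.maineq}, by composing with the Herz--Schur multipliers $\id \ot \m_i \ot \m_i$; only then can the limit functional be transferred to a state on the C$^*$-algebra generated by $\Psi(\cD)$, extended by Hahn--Banach, and restricted to $B \ovt \B(\ell^2(\Gamma)) \cong \langle M, e_B \rangle$ to witness relative amenability of $P$. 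Your step (1), upgrading an $A$-central functional to a $P$-central one ``by averaging,'' cannot work as stated: if it did, the theorem would hold with no dichotomy at all, which is false; $P$-centrality appears only at the very end of Case 1, deduced from centrality under the unitaries of $\cN_M(A)$ by Cauchy--Schwarz and $\|\cdot\|_2$-density. Finally, the reduction to the trivial action via the comultiplication $\Delta(b u_g) = b u_g \ot u_g$ (Proposition \ref{prop.trivial-action}) is what makes the \cite{PV11} net usable for an arbitrary trace preserving action $\Gamma \actson (B,\tau)$; ``suitably adapted to the relative setting'' hides exactly this step.
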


\subsection{It suffices to prove Theorem \ref{thm.key} for the trivial action}

\begin{proposition}\label{prop.trivial-action}
If Theorem \ref{thm.key} holds for the trivial action on \emph{arbitrary} tracial von Neumann algebras, then Theorem \ref{thm.key} also holds for arbitrary trace preserving actions.
\end{proposition}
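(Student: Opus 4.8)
\subsection*{Plan of proof}

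The plan is to realise $M=B\rtimes\Gamma$ inside a crossed product for the \emph{trivial} action, apply the hypothesised trivial-action case of Theorem \ref{thm.key} there, and then transport both alternatives of its conclusion back to $M$. Concretely, I would let $\widetilde M:=M\rtimes_{\mathrm{triv}}\Gamma$ be the crossed product for the trivial $\Gamma$-action on $M$, with canonical unitaries $(v_g)_{g\in\Gamma}$ commuting with $M$, so that $\widetilde M\cong M\ovt\rL\Gamma$ and, since the action is trivial, $\widetilde M=\widetilde B\rtimes_{\mathrm{triv}}\Gamma$ with $\widetilde B:=M$ and $\widetilde B\rtimes\Sigma=M\ovt\rL\Sigma$ for every subgroup $\Sigma$. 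I would then define the trace preserving embedding $\pi:M\hookrightarrow\widetilde M$ by $\pi(b)=b$ for $b\in B$ and $\pi(u_g)=u_g v_g$; one checks directly that this is a (normal, trace preserving, hence injective) $*$-homomorphism identifying $M$ with a von Neumann subalgebra of $\widetilde M$ with $\pi(B)=B\subset\widetilde B$. It is convenient to record the picture $\widetilde M=B\rtimes(\Gamma\times\Gamma)$ (first factor acting on $B$, second factor trivially), under which $\widetilde B=B\rtimes(\Gamma\times\{e\})$ and $\pi(M)=B\rtimes\Gamma_\Delta$ with $\Gamma_\Delta=\{(g,g):g\in\Gamma\}$; in particular $\pi(M)$ and $\widetilde B$ form a commuting square inside $\widetilde M$ with $\pi(M)\cap\widetilde B=\pi(B)$. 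Finally, $\Gamma$ is unchanged, so it remains weakly amenable and bi-exact relative to $\cG$, and the trivial-action case of Theorem \ref{thm.key} applies to $\widetilde M=\widetilde B\rtimes\Gamma$.

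Next I would transport the hypothesis forward. If $A\subset qMq$ is amenable relative to $B$, then via the $*$-isomorphism $\pi$, $\pi(A)$ is amenable relative to $\pi(B)=B$ inside $\pi(M)$; enlarging the ambient algebra along the trace preserving conditional expectation $\widetilde M\to\pi(M)$ (using the standard monotonicity of relative amenability in the ambient algebra, cf.\ \cite{OP07,PV11}) keeps this true inside $\widetilde M$; and since $B\subset\widetilde B$, relative amenability to $B$ implies relative amenability to $\widetilde B$. Hence $\pi(A)\subset\pi(q)\widetilde M\pi(q)$ is amenable relative to $\widetilde B$, and the trivial-action Theorem \ref{thm.key} yields that either $\cN_{\pi(q)\widetilde M\pi(q)}(\pi(A))\dpr$ is amenable relative to $\widetilde B$, or $\pi(A)\prec_{\widetilde M}\widetilde B\rtimes\Sigma$ for some $\Sigma\in\cG$.

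It then remains to transport the conclusion back. In the first case, $\pi(P)=\pi(\cN_{qMq}(A)\dpr)\subset\cN_{\pi(q)\widetilde M\pi(q)}(\pi(A))\dpr$ is amenable relative to $\widetilde B$; the commuting square $\pi(B)\subset\pi(M)\subset\widetilde M\supset\widetilde B$ provides a trace preserving, $\pi(M)$-bimodular conditional expectation $\langle\widetilde M,e_{\widetilde B}\rangle\to\langle\pi(M),e_{\pi(B)}\rangle$ restricting to $E_{\pi(M)}$ on $\widetilde M$, and composing a $\pi(P)$-central state witnessing relative amenability with this expectation produces a $\pi(P)$-central state witnessing that $\pi(P)$ is amenable relative to $\pi(B)$ inside $\pi(M)$, i.e.\ that $P$ is amenable relative to $B$ inside $M$. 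In the second case, $\widetilde B\rtimes\Sigma=M\ovt\rL\Sigma=B\rtimes(\Gamma\times\Sigma)$ inside $B\rtimes(\Gamma\times\Gamma)$; since $\pi(A)\subset B\rtimes\Gamma_\Delta$, the standard decomposition of intertwining-by-bimodules in a crossed product over the double cosets of $\Gamma_\Delta$ and $\Gamma\times\Sigma$ gives $g=(g_1,g_2)\in\Gamma\times\Gamma$ with $\pi(A)\prec_{B\rtimes\Gamma_\Delta}B\rtimes\bigl(\Gamma_\Delta\cap g(\Gamma\times\Sigma)g^{-1}\bigr)$, and $\Gamma_\Delta\cap g(\Gamma\times\Sigma)g^{-1}=\{(h,h):h\in g_2\Sigma g_2^{-1}\}$. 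Pulling this back along the isomorphism $\pi:M\to B\rtimes\Gamma_\Delta$ reads $A\prec_M B\rtimes g_2\Sigma g_2^{-1}=u_{g_2}(B\rtimes\Sigma)u_{g_2}^*$, hence $A\prec_M B\rtimes\Sigma$ with $\Sigma\in\cG$, as wanted.

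The routine part is the forward transport; the main obstacle is the backward transport in the last step. To push ``amenable relative to $\widetilde B$'' back to $M$ one must set up the functoriality of Jones' basic construction under the commuting square $\pi(M)\subset\widetilde M\supset\widetilde B$ carefully enough that the relative-amenability data genuinely descends, keeping track of the corners by $\pi(q)$; and to push ``$\pi(A)\prec_{\widetilde M}\widetilde B\rtimes\Sigma$'' back one must invoke the (standard) double coset description of $\prec$ in crossed products and check the short subgroup computation, together with the identifications $\widetilde B\rtimes\Sigma=B\rtimes(\Gamma\times\Sigma)$ and $\pi(M)=B\rtimes\Gamma_\Delta$. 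Both ingredients are standard, but the bookkeeping is where the care lies.
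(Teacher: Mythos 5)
Your construction is exactly the paper's: your $\pi$ is the comultiplication $\Delta : M \recht \cM := M \ovt \rL\Gamma$, $\Delta(bu_g) = bu_g \ot u_g$, with $\cM$ viewed as the crossed product of the trivial $\Gamma$-action on $M$, and the overall strategy (apply the trivial-action case to $\pi(A)$, then transport both alternatives back) is the paper's. Your forward transport is a correct variant of what the paper gets from \cite[Lemma 4.1]{PV11}: relative amenability does pass from the ambient algebra $\pi(M)$ to the ambient algebra $\widetilde M$ (compress $\langle \widetilde M, e_{\pi(B)}\rangle$ by the Jones projection $e_{\pi(M)}$, which lies in $\langle \widetilde M, e_{\pi(B)}\rangle$ because $\pi(B) \subset \pi(M)$), and enlarging the target from $\pi(B)$ to $\widetilde B$ is just restriction along $\langle \widetilde M, e_{\widetilde B}\rangle \subset \langle \widetilde M, e_{\pi(B)}\rangle$. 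Your double-coset treatment of the second alternative also works and gives the correct subgroup $\{(h,h) : h \in g_2 \Sigma g_2^{-1}\}$, although it is heavier machinery than needed: the paper's ``easy to check'' is a direct Fourier estimate showing $\|E_{M \ovt \rL\Sigma}(X \Delta(a) Y)\|_2 \leq \|c\|\,\|d\|\,\|E_{B \rtimes \Sigma}(u_s a u_t)\|_2$ for $X = c \ot u_s$, $Y = d \ot u_t$, which immediately yields $\pi(A) \prec_{\cM} M \ovt \rL\Sigma \Rightarrow A \prec_M B \rtimes \Sigma$ by Popa's criterion.

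The genuine problem is in the step you yourself flag as the main obstacle: descending ``$\pi(P)$ amenable relative to $\widetilde B$'' to ``$P$ amenable relative to $B$''. You propose a conditional expectation $\cE : \langle \widetilde M, e_{\widetilde B}\rangle \recht \langle \pi(M), e_{\pi(B)}\rangle$ and then want to ``compose the $\pi(P)$-central state with this expectation''. This is the wrong way around: the central state lives on (a corner of) $\langle \widetilde M, e_{\widetilde B}\rangle$, so composing it with a map \emph{out of} that algebra does not produce a state on $\langle \pi(M), e_{\pi(B)}\rangle$; to pull a state back you need a positive map going \emph{into} $\langle \widetilde M, e_{\widetilde B}\rangle$. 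Moreover, the existence of a conditional expectation in the direction you ask for is neither established nor needed, and any attempt to use it already presupposes a realization of $\langle \pi(M), e_{\pi(B)}\rangle$ inside $\langle \widetilde M, e_{\widetilde B}\rangle$, which is the actual missing ingredient. The correct device -- and the one the paper uses -- is the opposite-direction embedding: since $E_{\widetilde B} \circ \pi = \pi \circ E_B$ (the commuting square you already noted), there is a unital $*$-embedding $\Psi : \langle M, e_B \rangle \recht \langle \widetilde M, e_{\widetilde B}\rangle$ with $\Psi(x) = \pi(x)$ for $x \in M$ and $\Psi(e_B) = e_{\widetilde B}$; pulling the $\pi(P)$-central state on $\pi(q)\langle \widetilde M, e_{\widetilde B}\rangle \pi(q)$ back along $\Psi$ gives a $P$-central positive functional on $q\langle M, e_B\rangle q$ whose restriction to $qMq$ is $\tau$, i.e.\ $P$ is amenable relative to $B$. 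With this replacement your argument closes and coincides with the paper's proof.
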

\begin{proof}
Assume that Theorem \ref{thm.key} holds for the trivial action on arbitrary tracial von Neumann algebras. Let $\Gamma \actson (B,\tau)$ be any trace preserving action. Put $M = B \rtimes \Gamma$. Let $q \in M$ be a projection and $A \subset qMq$ a von Neumann subalgebra that is amenable relative to $B$. Denote by $P = \cN_{qMq}(A)\dpr$ the normalizer of $A$ inside $qMq$.

Define $\cM := M \ovt \rL \Gamma$ which we view as the crossed product of $\Gamma$ with the trivial action on $M$. Consider the trace preserving embedding
$$\Delta : M \recht \cM : \Delta(b u_g) = b u_g \ot u_g \quad\text{for all}\;\; b \in B , g \in \Gamma \; .$$
Put $\qtil = \Delta(q)$ and $\cA := \Delta(A)$. Denote by $\cP := \cN_{\qtil \cM \qtil}(\cA)\dpr$ the normalizer of $\cA$ inside $\qtil \cM \qtil$. Note that $\Delta(P) \subset \cP$.

As explained in the first paragraphs of the proof of \cite[Lemma 4.1]{PV11}, we have that $\cA$ is amenable relative to $M \ot 1$.
Since Theorem \ref{thm.key} holds for the trivial action of $\Gamma$ on $M$, at least one of the following statements holds.
\begin{itemize}
\item $\Delta(P)$ is amenable relative to $M \ot 1$.
\item There exists a $\Sigma \in \cG$ such that $\cA \prec_{\cM} M \ovt \rL \Sigma$.
\end{itemize}
If $\cA \prec_{\cM} M \ovt \rL \Sigma$, it is easy to check that $A \prec_M B \rtimes \Sigma$ so that the second statement in the formulation of Theorem \ref{thm.key} holds.

Next assume that $\Delta(P)$ is amenable relative to $M \ot 1$. So we have a $\Delta(P)$-central positive functional $\Omtil$ on $\qtil \langle \cM, e_{M \ot 1} \rangle \qtil$ satisfying $(\Omtil \circ \Delta)_{|qMq} = \tau_{|qMq}$. Since $E_{M \ot 1} \circ \Delta = \Delta \circ E_B$, the embedding $\Delta : M \recht \cM$ can be extended to an embedding
$$\Psi : \langle M, e_B \rangle \recht \langle \cM , e_{M \ot 1} \rangle : \Psi(e_B) = e_{M \ot 1}\quad\text{and}\quad \Psi(x) = \Delta(x) \quad\text{for all}\;\; x \in M \; .$$
It follows that $\Omtil \circ \Psi$ is a $P$-central positive functional on $q \langle M,e_B \rangle q$ satisfying $\Om_{|qMq} = \tau_{|qMq}$. Hence, $P$ is amenable relative to $B$ and the first statement in the formulation of Theorem \ref{thm.key} holds.
\end{proof}

\subsection{Setup and notations for the proof of Theorem \ref{thm.key}}\label{subsec.setup}

By Proposition \ref{prop.trivial-action}, we may assume that $\Gamma \actson (B,\tau)$ is the trivial action. We put $M := B \ovt \rL \Gamma$. For simplicity of notation we assume that $q = 1$. As in \cite[Remark 6.3]{PV11}, this notational simplification is only cosmetic and does not hide any essential parts of the argument.

So we are given a von Neumann subalgebra $A \subset M$ that is amenable relative to $B$. We denote by $P = \cN_M(A)\dpr$ its normalizer.
Following \cite[Theorem 5.1]{PV11}, we define $N$ as the von Neumann algebra generated by $B$ and $P\op$ on the Hilbert space $\rL^2(M) \ot_A \rL^2(P)$. Put $\cN := N \ovt \rL \Gamma$ and define the tautological embeddings
$$\pi : M \recht \cN : \pi(b \ot u_g) = b \ot u_g \quad\text{and}\quad \theta : P\op \recht \cN : \theta(y\op) = y\op \ot 1$$
for all $b \in B$, $g \in \Gamma$ and $y \in P$. Note that $\pi(M)$ and $\theta(P\op)$ commute and that together they generate $\cN$.

By \cite[Theorem 5.1]{PV11} we find a net of normal states $\om_n \in \cN_*$ satisfying the following properties.
\begin{itemize}
\item $\om_n(\pi(x)) \recht \tau(x)$ for all $x \in M$,
\item $\om_n(\pi(a) \theta(\abar)) \recht 1$ for all $a \in \cU(A)$,
\item $\|\om_n \circ \Ad(\pi(u) \theta(\ubar)) - \om_n\| \recht 0$ for all $u \in \cN_M(A)$.
\end{itemize}

We fix a standard Hilbert space $H$ for $N$ and we always view $N$ as acting on $H$. This standard Hilbert space comes with the canonical anti-unitary involution $J$.
Being the tensor product of $N$ and $\rL(\Gamma)$, the von Neumann algebra $\cN$ is standardly represented on $\cH := H \ot \ell^2(\Gamma)$ by the formula
$$(x \ot u_g) \cdot (\xi \ot \delta_h) = x \xi \ot \delta_{gh} \quad\text{for all}\;\; x \in N \; , \; g,h \in \Gamma \; , \; \xi \in H \; .$$
The corresponding anti-unitary involution $\cJ : \cH \recht \cH$ is given by $\cJ(\xi \ot \delta_g) = J \xi \ot \delta_{g^{-1}}$. So the von Neumann algebras $\pi(M)$, $\cJ \pi(M) \cJ$, $\theta(P\op)$ and $\cJ \theta(P\op) \cJ$ all act on $\cH$ and mutually commute.

Denote by $\xi_n \in \cH$ the canonical positive unit vectors that implement the normal states $\om_n$ on $\cN$. Whenever $u \in \cN_M(A)$ it follows from \cite[Theorem IX.1.2.(iii)]{Ta03} that the vector
$$\pi(u) \, \theta(\ubar) \, \cJ \pi(u) \, \theta(\ubar) \cJ \, \xi_n$$
is the canonical positive vector that implements $\om_n \circ \Ad(\pi(u^*)\theta(u\op))$. Using the Powers-St{\o}rmer inequality (see e.g.\ \cite[Theorem IX.1.2.(iv)]{Ta03}), the properties of $(\om_n)$ can now be rewritten as follows in terms of the net $(\xi_n)$.
\begin{align}
& \langle \pi(x) \xi_n ,\xi_n \rangle = \om_n(\pi(x)) \recht \tau(x) \quad\text{for all}\;\; x \in M \; , \label{eq.onx}\\
& \| \pi(a) \theta(\abar) \xi_n - \xi_n\| \recht 0 \quad\text{for all}\;\; a \in \cU(A) \label{eq.ona}\; , \\
& \| \pi(u) \, \theta(\ubar) \, \cJ \pi(u) \, \theta(\ubar) \cJ \, \xi_n \; - \; \xi_n \| \recht 0 \quad\text{for all}\;\; u \in \cN_M(A) \; .\label{eq.onu}
\end{align}

Since $\Gamma$ is bi-exact relative to $\cG$, Definition \ref{def.biexact-G} provides a map $\mu : \Gamma \recht \Prob \Gamma$ such that
$$\lim_{k \recht \infty/\cG} \|\mu(gkh) - g \cdot \mu(k)\|_1 = 0 \quad\text{for all}\;\; g,h \in \Gamma \; .$$
Define $\zeta : \Gamma \recht \ell^2(\Gamma) : \zeta(k) = \mu(k)^{1/2}$. Note that
\begin{equation}\label{eq.zeta}
\|\zeta(k)\|_2 = 1 \;\;\text{for all}\;\; k \in \Gamma \quad\text{and}\quad \lim_{k \recht \infty/\cG} \|\zeta(gkh^{-1}) - \lambda_g \zeta(k)\|_2 = 0 \quad\text{for all}\;\; g,h \in \Gamma \; .
\end{equation}
Define the isometry
$$V : \ell^2(\Gamma) \recht \ell^2(\Gamma) \ot \ell^2(\Gamma) : V \delta_k = \zeta(k) \ot \delta_k \quad\text{for all}\;\; k \in \Gamma \; .$$
We denote by $\cS$ the directed set of subsets of $\Gamma$ that are small relative to $\cG$. For every subset $\cF \subset \Gamma$, we denote by $P_\cF$ the orthogonal projection of $\ell^2(\Gamma)$ onto $\ell^2(\cF)$. Then \eqref{eq.zeta} can be rewritten as
\begin{equation*}
\lim_{\cF \in \cS} \; \bigl\| \, \bigl((\lambda_g \ot \lambda_g \rho_h) V - V \lambda_g \rho_h\bigr) \, P_{\Gamma - \cF} \, \bigr\| = 0 \quad\text{for all}\;\; g, h \in \Gamma \; .
\end{equation*}
The representation $(g,h) \mapsto \lambda_g \ot \lambda_g \rho_h$ of $\Gamma \times \Gamma$ is unitarily conjugate to the regular representation $(g,h) \mapsto \lambda_g \ot \lambda_h$ through the unitary $U \in \cU(\ell^2(\Gamma) \ot \ell^2(\Gamma))$ given by $U(\delta_k \ot \delta_r) = \delta_k \ot \delta_{r^{-1}k}$. We define the isometry
$$W : \ell^2(\Gamma) \recht \ell^2(\Gamma) \ot \ell^2(\Gamma) : W = U V \; .$$
We then get
\begin{equation}\label{eq.zeta-rewrite}
\lim_{\cF \in \cS} \; \bigl\| \, \bigl((\lambda_g \ot \lambda_h) W - W \lambda_g \rho_h\bigr) \, P_{\Gamma - \cF} \, \bigr\| = 0 \quad\text{for all}\;\; g, h \in \Gamma \; .
\end{equation}

Define the weakly dense $*$-subalgebra $M_0 \subset M$ given by $M_0 = B \otalg \C \Gamma$. Then define the unital $*$-algebras
$$\cD := M \otalg M\op \otalg P\op \otalg P \quad\text{with $*$-subalgebra}\quad \cD_0 = M_0 \otalg M_0\op \otalg P\op \otalg P \; .$$
Define the unique $*$-homomorphisms
$$
\Psi : \cD \recht \B(H \ot \ell^2(\Gamma) \ot \ell^2(\Gamma)) \quad\text{and}\quad \Theta : \cD \recht \B(H \ot \ell^2(\Gamma))
$$
that are separately normal in each of the tensor factors of $\cD = M \otalg M\op \otalg P\op \otalg P$ and that satisfy
\begin{align*}
& \Psi\bigl((b \ot u_g) \ot (c \ot u_h)\op \ot y\op \ot z \bigr) = b \;\; Jc^*J \;\; y\op \;\; J \zbar J \; \ot \; \lambda_g \ot \lambda_{h^{-1}} \; ,\\
& \Theta\bigl((b \ot u_g) \ot (c \ot u_h)\op \ot y\op \ot z \bigr) = \pi(b \ot u_g) \;\; \cJ \pi(c \ot u_h)^* \cJ \;\; \theta(y\op) \;\; \cJ \theta(\zbar) \cJ \; ,
\end{align*}
for all $b,c \in B$, $g,h \in \Gamma$, and $y,z \in P$. Note that for a better understanding of the defining formulae of $\Psi$ and $\Theta$, one should identify $P$ with $(P\op)\op$. Also note that by the definition of $\pi$ and $\cJ$, we have
$$\Theta\bigl((b \ot u_g) \ot (c \ot u_h)\op \ot y\op \ot z \bigr) = b \;\; Jc^*J \;\; y\op \;\; J \zbar J \; \ot \; \lambda_g \rho_{h^{-1}} \; ,
$$
for all $b,c \in B$, $g,h \in \Gamma$, and $y,z \in P$.
By linearity, \eqref{eq.zeta-rewrite} thus implies that
\begin{equation}\label{eq.equiv-W}
\lim_{\cF \in \cS} \; \bigl\| \, \bigl(\Psi(S) (1 \ot W) - (1 \ot W) \Theta(S) \bigr) \, (1 \ot P_{\Gamma - \cF}) \, \bigr\| = 0 \quad\text{for all}\;\; S \in \cD_0 \; .
\end{equation}
Since $W$ is an isometry, we get in particular that
\begin{equation}\label{eq.est-norm-Theta}
\limsup_{\cF \in \cS} \; \bigl\| \, \Theta(S) \, (1 \ot P_{\Gamma - \cF}) \, \bigr\| \; \leq \; \|\Psi(S)\| \quad\text{for all}\;\; S \in \cD_0 \; .
\end{equation}
It is important to note that \eqref{eq.equiv-W} and \eqref{eq.est-norm-Theta} only hold for $S \in \cD_0$ and not necessarily for all $S \in \cD$.

\subsection{The proof of Theorem \ref{thm.key} splits up in two cases}

We get the following dichotomy in terms of the net of unit vectors $(\xi_n)$ in $H \ot \ell^2(\Gamma)$ that we introduced in the previous section.

{\bf Case 1.} For every subset $\cF \subset \Gamma$ that is small relative to $\cG$, we have
$$\lim_n \| (1 \ot P_\cF) \xi_n\| = 0 \; .$$

{\bf Case 2.} There exists a subset $\cF \subset \Gamma$ that is small relative to $\cG$ and that satisfies
$$\limsup_n \| (1 \ot P_\cF) \xi_n\| > 0 \; .$$

\subsection{Proof of Theorem \ref{thm.key} in case 1}\label{subsec.case1}

Choose a (typically non-normal) state $\Om_1$ on $\B(H \ot \ell^2(\Gamma))$ as a weak$^*$ limit point of the net of states $S \mapsto \langle S \xi_n,\xi_n\rangle$. From \eqref{eq.onx} and \eqref{eq.onu}, we get that
\begin{align}
& \Om_1(\pi(x)) = \tau(x) \quad\text{and}\quad |\Om_1(S \pi(x))| \leq \|S\| \, \|x\|_2 \quad\text{for all}\;\; x \in M , S \in \B(H \ot \ell^2(\Gamma)) \; ,\label{eq.aaa}\\
& \Om_1(\Theta(u \ot \ubar \ot \ubar \ot u)) = 1 \quad\text{for all}\;\; u \in \cN_M(A) \; .\label{eq.bbb}
\end{align}
Since by assumption $\lim_n \| (1 \ot P_\cF) \xi_n\| = 0$ for all subsets $\cF \subset \Gamma$ that are small relative to $\cG$, we also get that
$$\Om_1(S) = \Om_1(S(1 \ot P_{\Gamma - \cF}))$$
for all $S \in \B(H \ot \ell^2(\Gamma))$ and all subsets $\cF \subset \Gamma$ that are small relative to $\cG$. In combination with \eqref{eq.est-norm-Theta}, it follows that
\begin{multline}\label{eq.thatsitpartial}
|\Om_1(\Theta(S))| = \limsup_{\cF \in \cS} |\Om_1(\Theta(S)(1 \ot P_{\Gamma - \cF}))| \leq \limsup_{\cF \in \cS} \| \Theta(S)(1 \ot P_{\Gamma - \cF}) \|
\\ \leq \|\Psi(S)\| \quad\text{for all}\;\; S \in \cD_0 \; .
\end{multline}
The main point of the proof will now be to prove the existence of $\kappa > 0$ such that
\begin{equation}\label{eq.maineq}
|\Om_1(\Theta(S))| \leq \kappa^2 \, \|\Psi(S)\| \quad\text{for all}\;\; S \in \cD \; .
\end{equation}

Since $\Gamma$ is weakly amenable, choose a sequence of finitely supported Herz-Schur multipliers $f_i : \Gamma \recht \C$ such that $f_i \recht 1$ pointwise and $\limsup_i \|f_i\|\cb = \kappa < \infty$. Denote by $\m_i : \rL \Gamma \recht \rL \Gamma$ the normal completely bounded maps given by $\m_i(u_g) = f_i(g) u_g$ for all $g \in \Gamma$. We define the corresponding normal completely bounded maps $\vphi_i : M \recht M$ and $\vphitil_i : M\op \recht M\op$ given by
$$\vphi_i(b \ot u_g) = f_i(g) (b \ot u_g) \quad\text{and}\quad \vphitil_i(\overline{b \ot u_g}) = f_i(g) \, \overline{b \ot u_g}$$
for all $b \in B$ and $g \in \Gamma$.

Observe that for all $x \in M$, we have $\lim_i \|x - \vphi_i(x)\|_2 = 0$ and $\lim_i \|x\op - \vphitil_i(x\op)\|_2 = 0$. Since the functions $f_i$ are finitely supported, we also note that for all $S \in \cD$, we have
$$(\vphi_i \ot \vphitil_i \ot \id \ot \id)(S) \in \cD_0 \; .$$
We claim that for all $x_1,x_2 \in M$ and all $y,z \in P$, we have
\begin{equation}\label{eq.crucialclaim}
\lim_i \Om_1\bigl(\Theta\bigl( \vphi_i(x_1) \ot \vphitil_i(x_2\op) \ot y\op \ot z \bigr)\bigr) = \Om_1\bigl(\Theta(x_1 \ot x_2\op \ot y\op \ot z)\bigr) \; .
\end{equation}
To prove this claim, note that \eqref{eq.aaa} implies that
\begin{align*}
\limsup_i \;\; \Bigl| \Om_1\bigl( \Theta\bigl( & (\vphi_i(x_1) - x_1) \ot \vphitil_i(x_2\op) \ot y\op \ot z \bigr)\bigr) \Bigr| \\
& \leq \limsup_i \;\; \bigl\| \Theta(1 \ot \vphitil_i(x_2\op) \ot y\op \ot z) \bigr\| \; \|\vphi_i(x_1) - x_1 \|_2 \\
& \leq \limsup_i \;\; \|\vphitil_i(x_2\op)\| \; \|y\| \; \|z\| \; \|\vphi_i(x_1) - x_1 \|_2 \\
& \leq \kappa \; \|x_2\| \; \|y\| \; \|z\| \; \limsup_i \|\vphi_i(x_1) - x_1 \|_2 = 0 \; .
\end{align*}
One similarly proves that
$$\lim_i \Om_1\bigl( \Theta\bigl( x_1 \ot (\vphitil_i(x_2\op) - x_2) \ot y\op \ot z \bigr)\bigr) = 0 \; .$$
Summing up both, the claim \eqref{eq.crucialclaim} follows. By linearity, we get that
\begin{equation}\label{eq.crucialtwo}
\Om_1(\Theta(S)) = \lim_i \Om_1\bigl(\Theta\bigl( (\vphi_i \ot \vphitil_i \ot \id \ot \id)(S) \bigr)\bigr) \quad\text{for all}\;\; S \in \cD \; .
\end{equation}
We are now ready to prove \eqref{eq.maineq}. Observe that $\Psi(\cD) \subset \B(H) \ovt \rL \Gamma \ovt \rL \Gamma$ and that
$$\Psi\bigl( (\vphi_i \ot \vphitil_i \ot \id \ot \id)(S) \bigr) = (\id \ot \m_i \ot \m_i) \bigl(\Psi(S)\bigr) \quad\text{for all}\;\; S \in \cD \; .$$
In combination with \eqref{eq.crucialtwo} and \eqref{eq.thatsitpartial}, we get for all $S \in \cD$ that
\begin{align*}
|\Om_1(\Theta(S))| & = \limsup_i \bigl|\Om_1\bigl(\Theta\bigl( (\vphi_i \ot \vphitil_i \ot \id \ot \id)(S) \bigr)\bigr)\bigr| \\
& \leq \limsup_i \bigl\|\Psi\bigl( (\vphi_i \ot \vphitil_i \ot \id \ot \id)(S)  \bigr) \bigr\| \\
& = \limsup_i \bigl\| (\id \ot \m_i \ot \m_i) \bigl( \Psi(S) \bigr) \bigr\| \\
& \leq \limsup_i \|\m_i\|\cb^2 \, \|\Psi(S)\| \leq \kappa^2 \, \|\Psi(S)\| \; .
\end{align*}
So, \eqref{eq.maineq} is proven.

Define the unital C$^*$-algebra $\cQ \subset \B(H \ot \ell^2(\Gamma) \ot \ell^2(\Gamma))$ as the norm closure of $\Psi(\cD)$. Because of \eqref{eq.maineq}, there is a unique continuous functional $\Om_2 \in \cQ^*$ such that $\Om_2(\Psi(S)) = \Om_1(\Theta(S))$ for all $S \in \cD$. Since $\Om_1$ is positive, it follows that for all $S \in \cD$,
$$\Om_2(\Psi(S)^* \Psi(S)) = \Om_2(\Psi(S^* S)) = \Om_1(\Theta(S^* S)) = \Om_1(\Theta(S)^* \Theta(S)) \geq 0 \; .$$
By density, it follows that $\Om_2(T^* T) \geq 0$ for all $T \in \cQ$. So, $\Om_2$ is a positive functional on $\cQ$. Since $\Om_2(1) = 1$, we conclude that $\Om_2$ is a state on $\cQ$.

Denote $\pi_0 : M \recht \B(H \ot \ell^2(\Gamma)) : \pi_0(b \ot u_g) = b \ot \lambda_g$ and note that $\pi_0(x) \ot 1 = \Psi(x \ot 1 \ot 1 \ot 1)$ for all $x \in M$.
From \eqref{eq.aaa} and \eqref{eq.bbb}, we get that
\begin{equation}\label{eq.verymuch}
\Om_2(\pi_0(x)\ot 1) = \tau(x)\; , \;\forall x \in M \quad\text{and}\quad \Om_2(\Psi(u \ot \ubar \ot \ubar \ot u)) = 1 \; , \;\forall u \in \cN_M(A) \; .
\end{equation}
By the Hahn-Banach theorem, we can extend $\Om_2$ to a functional on $\B(H \ot \ell^2(\Gamma) \ot \ell^2(\Gamma))$ without increasing the norm of $\Om_2$. We still denote this extension by $\Om_2$. Since $\|\Om_2\| = 1 = \Om_2(1)$, we get that the extended $\Om_2$ is still a state. Since the state $\Om_2$ equals $1$ on the unitaries $\Psi(u \ot \ubar \ot \ubar \ot u)$, $u \in \cN_M(A)$, we get that
\begin{multline}\label{eq.invariance}
\Om_2(S \Psi(u \ot \ubar \ot \ubar \ot u)) = \Om_2(S) = \Om_2(\Psi(u \ot \ubar \ot \ubar \ot u) S) \\ \text{for all}\;\; S \in \B(H \ot \ell^2(\Gamma) \ot \ell^2(\Gamma))\;\;\text{and all}\;\; u \in \cN_M(A) \; .
\end{multline}

Define the state $\Om \in B \ovt \B(\ell^2(\Gamma))$ given by $\Om(S) = \Om_2(S \ot 1)$. Since $\Psi( 1 \ot M\op \ot P\op \ot P)$ commutes with $B \ovt \B(\ell^2(\Gamma)) \ovt 1$, it follows from \eqref{eq.invariance} that $\Om$ is a $\pi_0(\cN_M(A))$-central state on $B \ovt \B(\ell^2(\Gamma))$. From \eqref{eq.verymuch}, we get that $\Om(\pi_0(x)) = \tau(x)$ for all $x \in M$.

We claim that $\Om$ is actually $\pi_0(P)$-central. Fix $S \in B \ovt \B(\ell^2(\Gamma))$. Since $\Om \circ \pi_0 = \tau$, it follows from the Cauchy-Schwarz inequality that
$$|\Om(S \pi_0(x))| \leq \|S\| \, \|x\|_2 \quad\text{and}\quad |\Om(\pi_0(x) S)| \leq \|S \| \, \|x\|_2 \quad\text{for all}\;\; x \in M \; .$$
So, the set of $x \in M$ satisfying $\Om(S \pi_0(x)) = \Om(\pi_0(x) S)$ is a $\|\,\cdot\,\|_2$-closed vector subspace of $M$. Since it contains $\cN_M(A)$, it also contains $P = \cN_M(A)\dpr$. This proves the claim that $\Om$ is a $\pi_0(P)$-central state.

The inclusion $\pi_0 : M \recht B \ovt \B(\ell^2(\Gamma))$ is canonically isomorphic with the inclusion $M \subset \langle M, e_{B \ot 1} \rangle$. So, we have found a $P$-central state on $\langle M,e_{B \ot 1} \rangle$ whose restriction to $M$ equals $\tau$. This means that $P$ is amenable relative to $B \ot 1$ and concludes the proof of Theorem \ref{thm.key} in case~1.

\subsection{Proof of Theorem \ref{thm.key} in case 2}

Take $\delta_1 > 0$ and take a subset $\cF \subset \Gamma$ that is small relative to $\cG$ and that satisfies
$$\limsup_n \| (1 \ot P_\cF) \xi_n\| > \delta_1 \; .$$
Since $\cF$ is small relative to $\cG$, we have that $\cF$ is contained in the union of $m < \infty$ subsets of $\Gamma$ of the form $g_0 \Sigma_0 h_0$ with $g_0,h_0 \in \Gamma$ and $\Sigma_0 \in \cG$. Putting $\delta = \delta_1 / m$, we find $g_0,h_0 \in \Gamma$ and $\Sigma_0 \in \cG$ such that
\begin{equation*}
\limsup_n \|(1 \ot P_{g_0 \Sigma_0 h_0}) \xi_n \| > \delta \; .
\end{equation*}
Put $\Sigma = h_0^{-1} \Sigma_0 h_0$ and denote by $\cF_0$ the singleton $\{g_0 h_0\}$. Replacing $(\xi_n)$ by a subnet it follows that $\cF_0$ is a finite subset of $\Gamma$ satisfying
\begin{equation}\label{eq.bigger}
\liminf_n \|(1 \ot P_{\cF_0 \Sigma}) \xi_n \| > \delta \; .
\end{equation}
We will show that $A \prec_M B \ovt \rL \Sigma$, using an argument inspired by the proof of \cite[Lemma 6.2]{CSU11}. Since $\Sigma$ is a conjugate of $\Sigma_0$ it then also follows that $A \prec_M B \ovt \rL \Sigma_0$. Since $\Sigma_0 \in \cG$, this will conclude the proof of Theorem \ref{thm.key} in case 2.

Assume that $A \not\prec_M B \ovt \rL \Sigma$. We will deduce below that for any finite subset $\cF_0 \subset \Gamma$ and any $\delta > 0$ satisfying \eqref{eq.bigger}, there exists a larger finite subset $\cF_1 \subset \Gamma$ such that
\begin{equation}\label{eq.evenbigger}
\liminf_n \|(1 \ot P_{\cF_1 \Sigma}) \xi_n \| > \sqrt{2} \delta  \; .
\end{equation}
Take an integer $k$ such that $2^{k/2} \delta > 1$. Iterating the above procedure $k$ times, we find a finite subset $\cF_k \subset \Gamma$ that satisfies the absurd statement
$$1 = \lim_n \|\xi_n \| \geq \liminf_n \|(1 \ot P_{\cF_k \Sigma}) \xi_n\| > 2^{k/2} \delta  > 1 \; .$$
So it remains to find a finite subset $\cF_1 \subset \Gamma$ satisfying \eqref{eq.evenbigger}.

Following \cite[Formula (6.9)]{CSU11}, we first claim that
\begin{equation}\label{eq.est-with-F0}
\limsup_n \|\pi(x) (1 \ot P_{\cF_0 \Sigma}) \xi_n\| \leq |\cF_0| \, \|x\|_2 \quad\text{for all}\;\; x \in M \; .
\end{equation}
To prove this claim it suffices to check that for all $g \in \Gamma$ and $x \in M$ we have
\begin{equation}\label{eq.suffices-for-est-with-F0}
\limsup_n \|\pi(x) (1 \ot P_{g \Sigma}) \xi_n \| \leq \|x\|_2 \; .
\end{equation}
First observe that
$$1 \ot P_{g \Sigma} = \pi(1 \ot u_g) (1 \ot P_\Sigma) \pi(1 \ot u_g^*) \; .$$
It then follows that, writing $y = (1 \ot u_g^*) x^* x (1 \ot u_g)$, we have
\begin{align*}
\|\pi(x)  (1 \ot P_{g \Sigma}) \xi_n \|^2 &= \langle (1 \ot P_\Sigma) \pi\bigl((1 \ot u_g^*) x^* x (1 \ot u_g)\bigr) (1 \ot P_\Sigma) \pi(1 \ot u_g^*) \xi_n, \pi(1 \ot u_g^*) \xi_n \rangle \\
&= \langle (1 \ot P_\Sigma)\pi(E_{B \ovt \rL \Sigma}(y) (1 \ot u_g^*)) \xi_n, \pi(1 \ot u_g^*) \xi_n \rangle \\
& = \|(1 \ot P_\Sigma)\pi(E_{B \ovt \rL \Sigma}(y)^{1/2} (1 \ot u_g^*)) \xi_n \|^2 \; .
\end{align*}
Using \eqref{eq.onx}, we conclude that
$$\limsup_n \|\pi(x) (1 \ot P_{g \Sigma}) \xi_n \|^2 \leq \| E_{B \ovt \rL \Sigma}(y)^{1/2} (1 \ot u_g^*) \|_2^2 = \tau(y) = \|x\|_2^2 \; .$$
This establishes \eqref{eq.suffices-for-est-with-F0}. Hence also the claim \eqref{eq.est-with-F0} follows.

Because of \eqref{eq.bigger} we can take $\eps > 0$ such that
\begin{equation}\label{eq.weerschat}
\limsup_n \|\xi_n - (1 \ot P_{\cF_0 \Sigma}) \xi_n \| < \sqrt{1-\delta^2} - \eps \; .
\end{equation}
For every $x \in M$ we denote by $x = \sum_{g \in \Gamma} (x)_g \ot u_g$, with $(x)_g \in B$, the Fourier decomposition of $x$. We claim that there exist $a \in \cU(A)$ and $v \in B \otalg \C \Gamma$ such that
\begin{equation}\label{eq.claimv}
\|a-v\|_2 < \frac{\eps}{|\cF_0|} \quad\text{and}\quad (v)_g = 0 \;\;\text{for all}\;\; g \in \cF_0  \Sigma \cF_0^{-1} \; .
\end{equation}
To prove this claim, first take $a \in \cU(A)$ such that
$$\bigl\| E_{B \ovt \rL \Sigma}((1 \ot u_g^*) a (1 \ot u_h))\bigr\|_2 < \frac{\eps}{3 |\cF_0|^3} \quad\text{for all}\;\; g,h \in \cF_0 \; .$$
This is possible by our assumption that $A \not\prec_M B \ovt \rL \Sigma$. For any subset $F \subset \Gamma$ we also denote by $1 \ot P_F$ the orthogonal projection of $\rL^2(M)$ onto the closure of $\lspan \{b \ot u_g \mid b \in B , g \in F\}$. So we have chosen the unitary $a \in \cU(A)$ such that $\|(1 \ot P_{g \Sigma h^{-1}})(a)\|_2 < \eps / (3 |\cF_0|^3)$ for all $g,h \in \cF_0$. It follows that $\|(1 \ot P_{\cF_0 \Sigma \cF_0^{-1}})(a)\|_2 < \eps / (3 |\cF_0|)$.
Choose $a' \in B \otalg \C \Gamma$ such that $\|a-a'\|_2 < \eps/ (3 |\cF_0|)$. It follows that $\|(1 \ot P_{\cF_0 \Sigma \cF_0^{-1}})(a')\|_2 < 2 \eps / (3 |\cF_0|)$. Defining $v := a' - (1 \ot P_{\cF_0 \Sigma \cF_0^{-1}})(a')$, the elements $a \in \cU(A)$ and $v \in B \otalg \C \Gamma$ satisfy claim \eqref{eq.claimv}.

From \eqref{eq.ona}, we know that $\lim_n \|\xi_n - \pi(a^*)\theta(a\op) \xi_n\| = 0$. In combination with \eqref{eq.weerschat} it follows that
\begin{multline}\label{eq.tussenstapje}
\limsup_n \|  \xi_n - \theta(\abar) \pi(a) \, (1 \ot P_{\cF_0 \Sigma})  \xi_n \| = \limsup_n \|\pi(a^*) \theta(a\op) \xi_n - (1 \ot P_{\cF_0 \Sigma}) \xi_n \| \\
 = \limsup_n \| \xi_n - (1 \ot P_{\cF_0 \Sigma}) \xi_n \|  < \sqrt{1-\delta^2} - \eps \; .
\end{multline}
Since $\|a - v\|_2 < \eps / |\cF_0|$, it follows from \eqref{eq.est-with-F0} that
$$\limsup_n \|\pi(a-v) \, (1 \ot P_{\cF_0 \Sigma}) \xi_n \| < \eps \; .$$
In combination with \eqref{eq.tussenstapje}, we get that
\begin{equation}\label{eq.thatsit}
\limsup_n \|\xi_n - \theta(\abar) \pi(v) \, (1 \ot P_{\cF_0 \Sigma}) \xi_n \| < \sqrt{1-\delta^2} \; .
\end{equation}
Define the subset $\cS \subset \Gamma$ given by $\cS := \{g \in \Gamma \mid (v)_g \neq 0\}$. Since $v \in B \otalg \C \Gamma$, the set $\cS$ is finite. Since $(v)_g = 0$ for all $g \in \cF_0 \Sigma \cF_0^{-1}$, we get that $\cS \cap \cF_0 \Sigma \cF_0^{-1} = \emptyset$. This means that $\cS \cF_0 \Sigma \cap \cF_0 \Sigma = \emptyset$.

Note that $\theta(\abar) = \abar \ot 1$ commutes with $1 \ot P_{\cF_0 \Sigma}$. Hence
$$\theta(\abar) \pi(v) \, (1 \ot P_{\cF_0 \Sigma}) \xi_n = \pi(v) \, (1 \ot P_{\cF_0 \Sigma}) \, \theta(\abar) \, \xi_n$$
lies in the range of $1 \ot P_{\cS \cF_0 \Sigma}$ for all $n$. It then follows from \eqref{eq.thatsit} that
$$\limsup_n \|\xi_n - (1 \ot P_{\cS \cF_0 \Sigma}) \xi_n \| < \sqrt{1-\delta^2} \; .$$
This means that
$$\liminf_n \| (1 \ot P_{\cS \cF_0 \Sigma}) \xi_n \| > \delta \; .$$
We put $\cF_1 := \cS \cF_0 \cup \cF_0$. Since $\cS \cF_0 \Sigma$ is disjoint from $\cF_0 \Sigma$, the vectors $(1 \ot P_{\cS \cF_0 \Sigma}) \xi_n$ and $(1 \ot P_{\cF_0 \Sigma}) \xi_n$ are orthogonal. So in combination with \eqref{eq.bigger}, it follows that \eqref{eq.evenbigger} holds. As explained right after \eqref{eq.evenbigger}, this concludes the proof of Theorem \ref{thm.key} in case~2.

\section{Proofs of Theorems \ref{thm.main-hyp}, \ref{thm.main-two}, \ref{thm.no-Cartan} and \ref{thm.products}}\label{sec.proofs}

\begin{proof}[Proof of Theorem \ref{thm.main-hyp}]
Let $\Gamma = \Gamma_1 \times \cdots \times \Gamma_n$ be a direct product of $n \geq 1$ weakly amenable, nonamenable, bi-exact groups $\Gamma_i$. Take an arbitrary free ergodic pmp action $\Gamma \actson (X,\mu)$. Write $B = \rL^\infty(X)$ and $M = B \rtimes \Gamma$. Let $A \subset M$ be a Cartan subalgebra. Because of \cite[Theorem A.1]{Po01}, it suffices to prove that $A \prec_M B$.

Denote by $\Gammah_i$ the direct product of all $\Gamma_j$ with $j \neq i$. Put $M_i := B \rtimes \Gammah_i$. Fix $i = 1,\ldots,n$ and view $M$ as $M = M_i \rtimes \Gamma_i$. Because $\Gamma_i$ is nonamenable, we have that $M$ is not amenable relative to $M_i$. So, Theorem \ref{thm.key} implies that $A \prec_M M_i$ for all $i = 1,\ldots,n$. Since we know that $M$ is a factor, that $A \subset M$ is regular and that $A \prec_M M_i$ for all $i = 1,\ldots,n$, it follows from \cite[Proposition 2.5 and Lemma 2.6]{Va10} that $A \prec_M B$.

By Lemma \ref{lem.exam-wa-bi-exact}, the groups in 1, 2 and 3 are weakly amenable, nonamenable and bi-exact. So, the theorem applies to these groups and their direct products.
\end{proof}

\begin{proof}[Proof of Theorem \ref{thm.main-two}]
Take $M = B \rtimes \Gamma$ and $A \subset qMq$ as in the formulation of the theorem. Put $P = \cN_{qMq}(A)\dpr$. By Theorem \ref{thm.key}, we have that $P$ is amenable relative to $B$, or that $A \prec_M B$.
\end{proof}

\begin{proof}[Proof of Theorem \ref{thm.no-Cartan}]
Put $M = N \ovt \rL \Gamma$ and assume that $A \subset M$ is a Cartan subalgebra. Since $\Gamma$ is nonamenable, $M$ is not amenable relative to $N$. So Theorem \ref{thm.key} implies that $A \prec_M N$. Taking relative commutants, it follows from \cite[Lemma 3.5]{Va07} that $\rL \Gamma \prec_M M \cap A'$. Since $M \cap A' = A$ and since $\Gamma$ is nonamenable, there are no normal $*$-homomorphisms from $\rL \Gamma$ to an amplification of $A$. Hence the statement $\rL \Gamma \prec_M A$ is absurd.
\end{proof}

\begin{proof}[Proof of Theorem \ref{thm.products}]
Using e.g.\ \cite[Proposition 2.5]{Va10}, we find projections $p_i \in \cZ(P)$ such that $Ap_i \prec^f_M B \rtimes \Gammah_i$ and $A(q-p_i) \not\prec B \rtimes \Gammah_i$ for all $i=1,\ldots,n$. Of course, some or even all of the $p_i$ could be zero. Define $p_0 = q - (p_1 \vee \cdots \vee p_n)$. We consider the subalgebra $Ap_0 \subset p_0 M p_0$ whose normalizer is given by $P p_0$. By Lemma \ref{lem.biexact-products}, the group $\Gamma$ is bi-exact relative to $\cG = \{\Gammah_1,\ldots,\Gammah_n\}$. By construction, $A p_0 \not\prec B \rtimes \Gammah_i$ for all $i=1,\ldots,n$. So, by Theorem \ref{thm.key}, we have that $Pp_0$ is amenable relative to $B$.
\end{proof}

\section{\boldmath Proof of Theorem \ref{thm.main-ME}}

We first prove the following more general result.

\begin{theorem}\label{thm.ME}
Let $\Lambda = \Lambda_1 \times \cdots \times \Lambda_n$ be a direct product of $1 \leq n < \infty$ weakly amenable, bi-exact groups. Assume that $\Gamma$ is a measure equivalence subgroup of $\Lambda$ through an action $\Gamma \times \Lambda \actson \Om$ as in Definition \ref{def.ME}. Then, at least one of the following statements holds.
\begin{enumerate}
\item[(1)] $\Gamma$ is $\cCs$-rigid and $\cC$-rigid.
\item[(2)] There exists an $i \in \{1,\ldots,n\}$, a nonnegligible $(\Gamma \times \Lambda)$-invariant subset $\Om_0 \subset \Om$ and a
sequence of measurable maps $\xi_n : \Om_0 \recht \Prob \Lambda_i$ such that
$$\lim_n \|\xi_n((g,s) \cdot x) - s_i \cdot \xi_n(x)\|_1 = 0 \quad\text{for all}\;\; g \in \Gamma, s \in \Lambda \;\;\text{and a.e.}\;\; x \in \Om_0 \; .$$
\end{enumerate}
Moreover in the following two special cases, statement (2) has a simpler equivalent formulation.
\begin{itemize}
\item[(a)] If also the fundamental domain of $\Gamma \actson \Om$ has finite measure (i.e.\ $\Gamma$ and $\Lambda$ are measure equivalent), then statement (2) is equivalent with the amenability of one of the $\Lambda_i$.
\item[(b)] Assume that the $\Lambda_i < G_i$ are lattices in the second countable locally compact groups $G_i$ and assume that $\Gamma < G = G_1 \times \cdots \times G_n$ is a countable closed subgroup. If we take $\Om = G$ with the action of $\Gamma \times \Lambda \actson \Om$ given by left right multiplication, then statement (2) is equivalent with the image of $\Gamma$ in one of the $G_i$ having an amenable closure.
\end{itemize}
\end{theorem}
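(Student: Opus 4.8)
The plan is to transport, through the given measure equivalence coupling, the question about $\rL^\infty(X)\rtimes\Gamma$ to one about a crossed product by the \emph{direct product} $\Lambda=\Lambda_1\times\cdots\times\Lambda_n$, where Theorem \ref{thm.products} is available, and then to read its two alternatives back through the coupling. Fix an ergodic component $\Om_0$ of $\Gamma\times\Lambda\actson\Om$; it is again a coupling realizing $\Gamma$ as a measure equivalence subgroup of $\Lambda$, and it will be the set appearing in alternative (2). By Lemma \ref{lem.biexact-products}, $\Lambda$ is bi-exact relative to $\cG=\{\Lambdah_1,\dots,\Lambdah_n\}$ with $\Lambdah_i=\prod_{j\neq i}\Lambda_j$, and $\Lambda$ is weakly amenable, weak amenability being stable under finite direct products. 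Given an arbitrary free ergodic pmp action $\Gamma\actson(X,\mu)$, put $M:=\rL^\infty(X)\rtimes\Gamma$ and $B:=\rL^\infty(X)$. Let $\Xi:=(X\times\Om_0)/\Gamma$, with the measure preserving, essentially free $\Lambda$-action descending from the action on the second coordinate; $\Xi$ is $\sigma$-finite, and has finite measure precisely when $\Gamma$ and $\Lambda$ are measure equivalent. The standard dictionary relating measure equivalence couplings, stable orbit equivalence, and stable isomorphism of group measure space inclusions (see \cite{MS02} and the references therein) yields a finite-trace projection $p\in\rL^\infty(\Xi)$ and an isomorphism $M\cong p\,\mathcal M\, p$, where $\mathcal M:=\rL^\infty(\Xi)\rtimes\Lambda$ and $B\cong p\rL^\infty(\Xi)p$ --- so the candidate Cartan subalgebra is carried onto the candidate Cartan subalgebra. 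In general $\mathcal M$ is only semifinite, so I would first record the obvious semifinite version of Theorem \ref{thm.products}: inspecting the proof of Theorem \ref{thm.key}, finiteness of the base was used only through the trace on the crossed product, which here is the semifinite trace of $\mathcal M$, finite on the corner $p\mathcal M p=M$ to which all the intertwining and relative amenability statements are in the end applied.

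Now let $A\subset M=p\mathcal M p$ be maximal abelian with $P:=\cN_M(A)\dpr$ a finite-index subfactor of $M$ (for $\cC$-rigidity, $A$ is Cartan and $P=M$). Being abelian, $A$ is amenable relative to $\rL^\infty(\Xi)$ inside $\mathcal M$, so the semifinite Theorem \ref{thm.products} provides projections $q_0,\dots,q_n\in\cZ(P)$ with $q_0\vee\cdots\vee q_n=p$ such that $P q_0$ is amenable relative to $\rL^\infty(\Xi)$ and $A q_i\prec^f_{\mathcal M}\rL^\infty(\Xi)\rtimes\Lambdah_i$ for every $i$; note $\cZ(P)\subset A$ since $A$ is maximal abelian, so each $q_i$ lies in $A$. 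If $q_0\neq 0$, then, since $P\subset M$ is a finite-index subfactor and $M$ is a factor, Lemma \ref{lem.rel-amen} forces $M$ itself to be amenable relative to $\rL^\infty(\Xi)$ inside $\mathcal M$; as $\mathcal M$ is a factor this means exactly that the action $\Lambda\actson\Xi$ is amenable, and hence so is $\Lambda_1\actson\Xi$. Taking approximately $\Lambda$-equivariant measurable maps $\Xi\recht\Prob\Lambda$, pushing them forward along $\Lambda\recht\Lambda/\Lambdah_1=\Lambda_1$, lifting through $\Xi=(X\times\Om_0)/\Gamma$ (which makes them $\Gamma$-invariant), and integrating the $X$-variable against $\mu$ then produces maps $\Om_0\recht\Prob\Lambda_1$ satisfying the conclusion of alternative (2).

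It remains to treat the case $q_0=0$, so that $q_1\vee\cdots\vee q_n=p$ and $A q_i\prec^f_{\mathcal M}\mathcal M_i:=\rL^\infty(\Xi)\rtimes\Lambdah_i$. Since $q_i\in A$, whenever $q_i\neq 0$ this upgrades to $A\prec_{\mathcal M}\mathcal M_i$. Now $\Lambdah_i$ is once more a direct product of weakly amenable, bi-exact groups, so I would iterate Theorem \ref{thm.products} as in the proof of Theorem \ref{thm.main-hyp} and in \cite{Va10,CSU11}: using that the ``maximal abelian with finite-index normalizer'' property propagates along $\prec^f$, apply Theorem \ref{thm.products} inside each $\mathcal M_i$, then inside $\rL^\infty(\Xi)\rtimes(\Lambdah_i\cap\Lambdah_j)$, and so on. Since $\bigcap_i\Lambdah_i=\{e\}$, after at most $n$ steps one reaches one of two outcomes: either only intertwining pieces occur throughout and one ends with $A\prec_{\mathcal M}\rL^\infty(\Xi)$, hence $A\prec_M B$, hence $\cCs$-rigidity by \cite[Proposition 4.12]{OP07} (respectively $\cC$-rigidity by \cite[Theorem A.1]{Po01}); or at some stage a nonzero relative amenability piece appears, and Lemma \ref{lem.rel-amen} forces $\rL^\infty(\Xi)\rtimes(\Lambdah_{i_1}\cap\cdots\cap\Lambdah_{i_k})$ to be amenable relative to $\rL^\infty(\Xi)$, i.e.\ a sub-product of the $\Lambda_j$'s acts amenably on $\Xi$; in particular some single $\Lambda_j\actson\Xi$ is amenable, and then the argument of the previous paragraph again gives alternative (2).

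Finally, the two special formulations. In case (a), $\Xi$ has finite measure, $\mathcal M$ is a genuine II$_1$ factor (so Theorem \ref{thm.products} applies directly), and amenability of $\Lambda_i\actson\Xi$ on the finite space $\Xi$ is equivalent to amenability of the group $\Lambda_i$: combining a fibrewise invariant mean with the invariant probability measure gives an invariant mean on $\ell^\infty(\Lambda_i)$, while conversely constant F\o lner maps witness (2). In case (b), where $\Lambda_i<G_i$ is a lattice, $\Om=G=\prod G_i$ with the left--right multiplication action and $\Gamma<G$ is closed, the action $\Lambda_i\actson\Xi$ is induced from right translation on $G_i$, and by induction of amenable actions along the lattice $\Lambda_i<G_i$ together with a standard analysis of the $G$-space $\Xi$, this action is amenable exactly when the closure of the image of $\Gamma$ in $G_i$ is an amenable group. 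The step I expect to demand the most care is not any isolated point but the combination of (i) the semifinite framework --- the realization of $M$ as a corner of $\rL^\infty(\Xi)\rtimes\Lambda$ with Cartan preserved, and the semifinite form of Theorem \ref{thm.products} --- with (ii) the bookkeeping of the iterated application of Theorem \ref{thm.products}, ensuring the finite-index normalizer hypothesis is correctly carried along and that each relative amenability piece is translated back into the concrete almost-equivariant maps required in alternative (2).
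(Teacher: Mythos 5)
The central gap is your opening identification. A coupling as in Definition \ref{def.ME} does not yield $M=\rL^\infty(X)\rtimes\Gamma\cong p\bigl(\rL^\infty(\Xi)\rtimes\Lambda\bigr)p$ with $B=\rL^\infty(X)$ carried onto a corner of $\rL^\infty(\Xi)$. Restricting the orbit equivalence relation of the $(\Gamma\times\Lambda)$-action on $X\times\Om$ (with $\Gamma$ acting diagonally and $\Lambda$ acting on $\Om$) to the two natural cross-sections identifies a corner of $\rL^\infty(\Xi)\rtimes\Lambda$ with $N=\rL^\infty(X\times\Om/\Lambda)\rtimes\Gamma$ for the \emph{diagonal} $\Gamma$-action, whose Cartan subalgebra corresponds to $\rL^\infty(X\times\Om/\Lambda)$, not to $B$. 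Consequently the intertwining branch of your argument only gives $A\prec_N\rL^\infty(X\times\Om/\Lambda)$, and your concluding ``hence $A\prec_M B$'' is precisely the nontrivial point rather than a formality. In the paper this is exactly where the geometry of the coupling enters: in case 1 one proves $\Delta(A)\prec^f N\ot 1$ for the cocycle embedding $\Delta(a u_g)=(a u_g\ot1)V_g$, $V_g(x)=v_{\om(g,x)}$, and converts this into $A\prec_M\rL^\infty(X)$ using the identity $\|(1\ot P_\cF)\Delta(a)\|_2^2=\sum_{g\in\Gamma}\|(a)_g\|_2^2\,m(\{x\in\cU\mid\om(g,x)\in\cF\})$ together with the existence of a fundamental domain for $\Gamma\actson\Om$, which makes these measures uniformly small outside a finite subset $\cF'\subset\Gamma$. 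Without an argument of this kind, alternative (1) is not established.

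Two further points. The ``semifinite Theorem \ref{thm.products}'' you invoke (needed because $\Xi$ has infinite measure whenever $\Gamma$ is only an ME subgroup of $\Lambda$) is not a routine inspection: Theorem \ref{thm.key} rests on the relative weak compactness theorem of \cite[Theorem 5.1]{PV11}, which is a statement about tracial von Neumann algebras. The paper avoids semifinite algebras altogether: it never forms $\rL^\infty(\Xi)\rtimes\Lambda$, but works inside the tracial algebra $N\ovt\rL\Lambda$, viewed as $N_i\rtimes\Lambda_i$ with trivial $\Lambda_i$-action, applies Theorem \ref{thm.key} once for each $i$, and combines the resulting statements $\Delta(A)\prec^f N\ovt\rL\Lambdah_i$ into $\Delta(A)\prec^f N\ot1$ using \cite{Va10}; no iteration of Theorem \ref{thm.products} through corners of $\rL^\infty(\Xi)\rtimes\Lambdah_i$ is needed, which also removes your unproved claim that the finite-index-normalizer hypothesis propagates along $\prec^f$. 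Finally, in the relative-amenability branch, the passage to the maps $\xi_n:\Om_0\recht\Prob\Lambda_i$ of (2) (and your appeal to factoriality of $\rL^\infty(\Xi)\rtimes\Lambda$, which may fail even after passing to an ergodic component of the coupling, since the diagonal action $\Gamma\actson X\times\Om/\Lambda$ need not be ergodic) requires the explicit construction carried out in the paper: Lemma \ref{lem.rel-amen} upgrades $\Delta(P)p$ to $\Delta(M)q$, the resulting central state is restricted to $\rL^\infty(\cU\times\Lambda_i)$, normalized by a Radon--Nikodym argument on its support $\cW$, approximated in $\rL^1$ by convex combinations, and extended $\Lambda$-equivariantly to $\Om_0=\cW\cdot\Lambda$. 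Your overall strategy is in the same spirit as the paper's, but these steps, as written, do not go through.
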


Note that Theorem \ref{thm.main-ME} is a direct consequence of Theorem \ref{thm.ME}. Case~1 of Theorem \ref{thm.main-ME} follows immediately by using (a) in Theorem \ref{thm.ME}. To prove case~2 of Theorem \ref{thm.main-ME}, we choose cocompact lattices $\Lambda_i < G_i$. Then all $\Lambda_i$ are non-elementary hyperbolic groups. So, by Lemma \ref{lem.exam-wa-bi-exact}, all $\Lambda_i$ are weakly amenable, nonamenable and bi-exact. So case~2 of Theorem \ref{thm.main-ME} follows by using (b) in Theorem \ref{thm.ME}.

\begin{proof}[Proof of Theorem \ref{thm.ME}]
Choose a free ergodic pmp action $\Gamma \actson X$. Put $M = \rL^\infty(X) \rtimes \Gamma$ and assume that $A \subset M$ is a maximal abelian von Neumann subalgebra whose normalizer $\cN_M(A)\dpr$ is a finite index subfactor of $M$. From \cite[Theorem A.1]{Po01}, we know that $A$ is unitarily conjugate with $\rL^\infty(X)$ if and only if $A \prec_M \rL^\infty(X)$. We will prove that either $A \prec_M \rL^\infty(X)$ or that (2) holds.

We denote the commuting actions $\Gamma \actson \Om$ and $\Lambda \actson \Om$ as actions on the left, resp.\ on the right, and denote these actions by a dot $\cdot$. Choose a fundamental domain $\cU \subset \Om$ for the action $\Gamma \actson \Om$ and choose a fundamental domain $\cV \subset \Om$ for the action $\Lambda \actson \Om$. So, up to measure zero, we get partitions
$$\Om = \bigsqcup_{g \in \Gamma} g \cdot \cV \quad\text{and}\quad G = \bigsqcup_{s \in \Lambda} \cU \cdot s \; .$$
Since $\cU$ is of finite measure, we may normalize $m$ such that $m(\cU) = 1$.

We identify $\Om/\Lambda = \cU$. Through this identification, the natural action $\Gamma \actson \Om/\Lambda$ becomes a pmp action $\Gamma \actson \cU$ that we denote by $\ast$ to distinguish it from
the action $\Gamma \actson \Om$ denoted by $\cdot$. We then get the $1$-cocycle $\om : \Gamma \times \cU \recht \Lambda$ for the action $\Gamma \overset{\ast}{\actson} \cU$ such that
$$g \cdot x = (g \ast x) \cdot \om(g,x) \quad\text{for all}\;\; g \in \Gamma \;\;\text{and a.e.}\;\; x \in \cU \; .$$
In particular, for $g \in \Gamma$ and a.e.\ $x \in \cU$, we have $\om(g,x) = s$ if and only if $g \cdot x \in \cU \cdot s$.

Define the tracial von Neumann algebra $N := \rL^\infty(X \times \cU) \rtimes \Gamma$, where $\Gamma$ acts on $X \times \cU$ diagonally. We view $M$ as a von Neumann subalgebra of $N$ in the canonical way. For every $g \in \Gamma$, we denote by $V_g \in \rL^\infty(\cU) \ovt \rL \Lambda$ the unitary given by $V_g(x) = v_{\om(g,x)}$. Here we use the notation $(v_s)_{s \in \Lambda}$ to denote the canonical unitaries in $\rL \Lambda$. We then get a normal trace preserving $*$-homomorphism
$$\Delta : N \recht N \ovt \rL \Lambda : \Delta(a u_g) = (a u_g \ot 1) V_g \quad\text{for all}\;\; a \in \rL^\infty(X \times \cU), g \in \Gamma \; .$$
We put $N_i := N \ovt \rL \Lambdah_i$ and identify $N \ovt \rL \Lambda = N_i \ovt \rL \Lambda_i$. As such we view $N \ovt \rL \Lambda$ as the crossed product of $N_i$ and $\Lambda_i$ w.r.t.\ the trivial action of $\Lambda_i$ on $N_i$. Since $\Lambda_i$ is weakly amenable and bi-exact, we will apply Theorem \ref{thm.key} to this crossed product.

Denoting $P = \cN_M(A)\dpr$, we distinguish two cases.

{\bf Case 1.} For all $i \in \{1,\ldots,n\}$ and all nonzero projections $p \in \Delta(P)' \cap (N \ovt \rL \Lambda)$, we have that $\Delta(P)p$ is not amenable relative to $N_i$.

{\bf Case 2.} There exists an $i \in \{1,\ldots,n\}$ and a nonzero projection $p \in \Delta(P)' \cap (N \ovt \rL \Lambda)$ such that $\Delta(P) p$ is amenable relative to $N_i$.

We prove that in case 1, we have $A \prec_M \rL^\infty(X)$, while in case 2, there exists a sequence $\xi_n$ satisfying the conditions of statement (2).

{\bf Proof of case 1.} It follows from Theorem \ref{thm.key} that $\Delta(A)p \prec N_i$ for all $i \in \{1,\ldots,n\}$ and all nonzero projections $p \in \Delta(P)' \cap (N \ovt \rL \Lambda)$. So, using e.g.\ \cite[Proposition 2.6]{Va10}, we get that $\Delta(A) \prec^f N \ovt \rL(\Lambdah_i)$ for all $i \in \{1,\ldots,n\}$. But then $\Delta(A) \prec^f N \ot 1$ (see e.g.\ \cite[Lemma 2.7]{Va10}).

For every subset $\cF \subset \Lambda$, we denote by $P_\cF$ the orthogonal projection of $\ell^2(\Lambda)$ onto $\ell^2(\cF)$. We similarly denote, for every subset $\cF' \subset \Gamma$, by $P_{\cF'}$ the orthogonal projection of $\rL^2(M)$ onto the closed linear span of $\{b u_g \mid b \in \rL^\infty(X), g \in \cF'\}$. Choose $\eps > 0$. We now prove that there exists a finite subset $\cF' \subset \Gamma$ such that
\begin{equation}\label{eq.ouraim}
\|P_{\cF'}(a)\|_2^2 > 1-2\eps \quad\text{for all}\;\; a \in \cU(A) \; .
\end{equation}
Once \eqref{eq.ouraim} is proven, the required conclusion $A \prec_M \rL^\infty(X)$ follows from Definition \ref{def.intertwine}.

Since $\Delta(A) \prec^f N \ot 1$, we get from \cite[Lemma 2.5]{Va10}, a finite subset $\cF \subset \Lambda$ such that
\begin{equation}\label{eq.anapproximation}
\|(1 \ot P_{\cF}) \Delta(a) \|_2^2 > 1-\eps \quad\text{for all}\;\; a \in \cU(A) \; .
\end{equation}
For every $a \in M$, we denote by $a = \sum_{g \in \Gamma} (a)_g u_g$, with $(a)_g \in \rL^\infty(X)$, the Fourier decomposition of $a$. A direct computation yields
\begin{equation}\label{eq.hierennu}
\|(1 \ot P_{\cF})\Delta(a)\|_2^2 = \sum_{g \in \Gamma} \|(a)_g\|_2^2 \;\; m\bigl(\{x \in \cU \mid \om(g,x) \in \cF \}\bigr) \; .
\end{equation}
Note that $\om(g,x) \in \cF$ if and only if $g \cdot x \in \cU \cdot \cF$.
Since $\cU \subset \Om$ has finite measure and $\Gamma \actson \Om$ admits a fundamental domain, there exists a finite subset $\cF' \subset \Gamma$ such that
$$m\bigl(\{x \in \cU \mid \om(g,x) \in \cF \}\bigr) < \eps \quad\text{for all}\;\; g \in \Gamma - \cF' \; .$$
A combination of \eqref{eq.hierennu} and \eqref{eq.anapproximation} then yields \eqref{eq.ouraim}. This concludes the proof of case~1.

{\bf Proof of case 2.} Since $P$ is a finite index subfactor of $M$, Lemma \ref{lem.rel-amen} provides a projection $q \geq p$ such that $q \in \Delta(M)' \cap (N \ovt \rL \Lambda)$ and such that $\Delta(M) q$ is amenable relative to $N_i$.
Write $\cN := \langle N \ovt \rL \Lambda, e_{N_i} \rangle$. We get a $\Delta(M)q$-central positive functional $\Psi_1$ on $q \cN q$ such that $\Psi_1(x) = \tau(x)$ for all $x \in q (N \ovt \rL \Lambda) q$. We identify $\cN = N_i \ovt \B(\ell^2(\Lambda_i))$. As such, we view $\rL^\infty(\cU \times \Lambda_i) = \rL^\infty(\cU) \ovt \ell^\infty(\Lambda_i)$ as a von Neumann subalgebra of $\cN$. The unitaries $\Delta(u_g) \in \cN$, $g \in \Gamma$, normalize $\rL^\infty(\cU \times \Lambda_i) \subset \cN$ and induce the action $\Gamma \actson \cU \times \Lambda_i$ given by $g \cdot (x,s) = (g * x, \om(g,x)_i \, s)$. The formula $\Psi(F) = \Psi_1(q F q)$ then provides a nonzero positive $\Gamma$-invariant functional on $\rL^\infty(\cU \times \Lambda_i)$ such that the restriction of $\Psi$ to $\rL^\infty(\cU)$ is normal and $\Gamma$-invariant.

Denote by $\cW \subset \cU$ the support of $\Psi_{|\rL^\infty(\cU)}$. Then, $\cW$ is a nonnegligible $\Gamma$-invariant subset of $\cU$. Modifying $\Psi$ by using the $\Gamma$-invariant Radon-Nikodym derivative between $\Psi_{|\rL^\infty(\cW)}$ and integration w.r.t.\ $m$, we may assume that $\Psi_{|\rL^\infty(\cW)}$ equals integration w.r.t.\ $m$. Approximating $\Psi \in \rL^\infty(\cW \times \Lambda_i)^*$ by a net of elements in $\rL^1(\cW \times \Lambda_i)^+$ and passing to convex combinations, we can find a sequence of measurable maps $\psi_n : \cW \recht \Prob \Lambda_i$ such that
$$\lim_n \|\psi_n(g * x) - \om(g,x)_i \cdot \psi_n(x)\|_1 = 0 \quad\text{for all}\;\; g \in \Gamma \;\;\text{and a.e.}\;\; x \in \cW \; .$$
Define $\Om_0 := \cW \cdot \Lambda$. Then, $\Om_0$ is a nonnegligible $(\Gamma \times \Lambda)$-invariant subset of $\Om$. Defining $\xi_n : \Om_0 \recht \Prob \Lambda_i$ given by $\xi_n(x \cdot s) := s_i^{-1} \cdot \psi_n(x)$ for all $x \in \cW$ and $s \in \Lambda$, it is easy to check that
$$\lim_n \|\xi_n(g \cdot y \cdot r^{-1}) - r_i \cdot \xi_n(y)\|_1 = 0 \quad\text{for all}\;\; g \in \Gamma, r \in \Lambda \;\;\text{and a.e.}\;\; y \in \Om_0 \; .$$
So, we have shown that (2) holds. This concludes the proof of case~2.

It remains to prove (a) and (b).

(a)\ If $\Lambda_i$ is amenable, take a sequence $\eta_n \in \Prob \Lambda_i$ such that $\lim_n \|s \cdot \eta_n - \eta_n\|_1 = 0$ for all $s \in \Lambda_i$. Defining $\xi_n(x) = \eta_n$ for all $x \in \Om$, it follows that (2) holds. Conversely, assume that the sequence $\xi_n : \Om_0 \recht \Prob \Lambda_i$ satisfies (2) and that also $m(\cV) < \infty$. Identifying $\Gamma \backslash \Om$ with $\cV$, we get a right action of $\Lambda$ on $\cV$, denoted by $*$, and a $1$-cocycle $\mu : \cV \times \Lambda \recht \Gamma$ such that
$$y \cdot s = \mu(y,s) \cdot (y * s) \quad\text{for a.e.\ $y \in \cV$ and all $s \in \Lambda$.}$$
Consider the action $\Lambda \actson \cV \times \Lambda_i$ given by $s \cdot (y,t) = (y * s^{-1}, s_i t)$. Since $\Om_0 \subset \Om$ is nonnegligible and $(\Gamma \times \Lambda)$-invariant, also $\cV \cap \Om_0$ is nonnegligible and invariant under the $*$-action of $\Lambda$. The restriction of $\xi_n$ to $\cV \cap \Om_0$ defines a sequence $\zeta_n \in \rL^1((\cV \cap \Om_0) \times \Lambda_i)^+$ with $\|\zeta_n\|_1 = m(\cV \cap \Om_0)$ for all $n$, that satisfies $\lim_n \|s \cdot \zeta_n - \zeta_n\|_1 = 0$ for all $s \in \Lambda$, by the dominated convergence theorem. The push forward of $\zeta_n$ along the factor map $\cV \times \Lambda_i \recht \Lambda_i : (y,r) \mapsto r$, then provides, after rescaling, a sequence $\eta_n \in \Prob \Lambda_i$ such that $\lim_n \|s \cdot \eta_n - \eta_n\|_1 = 0$ for all $s \in \Lambda_i$. So, $\Lambda_i$ is amenable.

(b)\ Assume that $\Om = G = G_1 \times \cdots \times G_n$ as in statement (b). When $x \in G$, we denote by $x_i$ the $i$'th component of $g$. First assume that we have a sequence $\xi_n : \Om_0 \recht \Prob \Lambda_i$ as in (2). We consider the action $\Gamma \actson \cU \times \Lambda_i$ given by $g \cdot (x,s) = (g * x , \om(g,x)_i \, s)$. As in the proof of (a), $\cU \cap \Om_0$ is nonnegligible, invariant under the $*$-action of $\Gamma$ on $\cU$, and the restriction of $\xi_n$ to $\cU \cap \Om_0$ defines a sequence $\zeta_n \in \rL^1((\cU \cap \Om_0) \times \Lambda_i)^+$ with $\|\zeta_n\|_1 = m(\cU \cap \Om_0)$ for all $n$, that satisfies $\lim_n \|g \cdot \zeta_n - \zeta_n\|_1 = 0$ for all $g \in \Gamma$. The push forward of $\zeta_n$ along the factor map $\cU \times \Lambda_i \recht G_i : (x,s) \mapsto x_i s$, then provides, after rescaling, a sequence of unit vectors $\eta_n \in \rL^1(G_i)^+$ such that $\lim_n \|g_i \cdot \eta_n - \eta_n\|_1 = 0$ for all $g \in \Gamma$. Denoting $\pi_i : G \recht G_i : \pi_i(g) = g_i$, it follows from Lemma \ref{lem.criterion-amen} that $\pi_i(\Gamma)$ has an amenable closure inside $G_i$.

Conversely, assume that the closure $H$ of $\pi_i(\Gamma)$ in $G_i$ is an amenable locally compact group. Then the action $\Lambda_i \actson H \backslash G_i$ is amenable in the sense of Zimmer. So, we find a sequence of measurable maps $\psi_n : H \backslash G_i \recht \Prob \Lambda_i$ such that $\lim_n \|\psi_n(x \cdot s) - s^{-1} \cdot \psi_n(x)\|_1 = 0$ for a.e.\ $x \in H \backslash G_i$ and all $s \in \Lambda_i$. We view $\psi_n$ as a sequence of $H$-invariant measurable maps $\psi_n : G_i \recht \Prob \Lambda_i$. Defining $\xi_n : G \recht \Prob \Lambda_i : \xi_n(x) = \psi_n(x_i)$, statement (2) is satisfied.
\end{proof}

For the convenience of the reader, we include a proof of the following standard, but slightly technical, lemma.

\begin{lemma}\label{lem.criterion-amen}
Let $G$ be a second countable locally compact group and $H_0 < G$ any subgroup. Assume that $\rL^\infty(G)$ admits a state that is invariant under left translation by all elements of $H_0$. Then the closure of $H_0$ is an amenable locally compact group.
\end{lemma}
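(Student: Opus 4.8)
The plan is to use the given $H_0$-invariant state on $\rL^\infty(G)$ to manufacture a left-invariant mean on the space $\rLUC(H)$ of bounded left uniformly continuous functions on $H := \overline{H_0}$, and then to invoke the classical fact that a second countable locally compact group is amenable precisely when $\rLUC$ admits a left-invariant mean. This reduction makes sense because the closure of a subgroup of a topological group is again a subgroup, so $H$ is a closed subgroup of $G$, hence itself a second countable locally compact group; moreover $H_0$ is dense in $H$ by construction.

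First I would fix the relevant data. Since $G$ is second countable and $H \subset G$ is closed, a standard selection theorem (Mackey) provides a Borel section $\sigma : H\backslash G \recht G$ of the quotient map $\pi : G \recht H\backslash G$. This yields a Borel map $\beta : G \recht H$, $\beta(g) := g\,\sigma(\pi(g))^{-1}$, characterized by the factorization $g = \beta(g)\,\sigma(\pi(g))$, and satisfying the equivariance $\beta(hg) = h\,\beta(g)$ for all $h \in H$ and $g \in G$, since left multiplication by $h \in H$ does not change the coset $\pi(g)$. Now for $f \in \rLUC(H)$ the composition $f \circ \beta$ is a bounded Borel function on $G$, hence an element of $\rL^\infty(G)$, and writing $m$ for the given $H_0$-invariant state on $\rL^\infty(G)$ I would define $\mu(f) := m(f \circ \beta)$. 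Since $m$ is a state and $1 \circ \beta = 1$, the functional $\mu$ is a mean on $\rLUC(H)$. Writing $(L_s \xi)(x) = \xi(s^{-1}x)$ for the left translation action, the equivariance of $\beta$ yields $(L_{h_0} f) \circ \beta = L_{h_0}(f \circ \beta)$ for every $h_0 \in H_0$ (both functions equal $g \mapsto f(h_0^{-1}\beta(g))$); invariance of $m$ then gives $\mu(L_{h_0} f) = m(L_{h_0}(f \circ \beta)) = m(f \circ \beta) = \mu(f)$, so $\mu$ is invariant under left translation by $H_0$.

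The remaining step is to upgrade $H_0$-invariance of $\mu$ to $H$-invariance. Here I would use that $\rLUC(H)$ is left-translation invariant and that for $f \in \rLUC(H)$ the orbit map $h \mapsto L_h f$ is norm-continuous from $H$ into $\rLUC(H)$ for the supremum norm --- this is exactly the defining property of $\rLUC$ --- so that $h \mapsto \mu(L_h f)$ is a continuous function on $H$. Being constant on the dense subgroup $H_0$, it is constant on all of $H$; hence $\mu$ is a left-$H$-invariant mean on $\rLUC(H)$, and the cited characterization of amenability shows that $H$ is amenable.

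The main obstacle --- and the reason the argument is routed through $\rLUC(H)$ rather than through $\rL^\infty$ directly --- is precisely this last upgrade: the left translation action of a locally compact group on its $\rL^\infty$ is not norm-continuous, so $H_0$-invariance of a functional on $\rL^\infty(H)$ cannot in general be promoted to $H$-invariance by a density argument. Passing to the translation-continuous subalgebra $\rLUC$ repairs this, at the cost of using the equivalence ``$H$ amenable $\iff$ $\rLUC(H)$ has a left-invariant mean''. The two external inputs, namely the existence of a Borel section of $H\backslash G$ and this equivalence, are both classical.
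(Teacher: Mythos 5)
Your proof is correct and follows essentially the same route as the paper's: an equivariant Borel map $G \recht H$ (which you construct explicitly from a Borel section of $H\backslash G$, the paper citing the same selection theorem directly), push the $H_0$-invariant state to a mean on $\rLUC(H)$, upgrade $H_0$-invariance to $H$-invariance by norm continuity of translation on $\rLUC$, and conclude amenability of $H$ from the classical characterization. You merely spell out the details the paper leaves implicit.
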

\begin{proof}
Let $\om$ be a left $H_0$-invariant state on $\rL^\infty(G)$. Denote by $H$ the closure of $H_0$ inside $G$. Since $G$ is second countable, we can take a Borel map $T : G \recht H$ satisfying $T(hg) = h T(g)$ for all $h \in H$, $g \in G$ (see e.g.\ \cite[Theorem 12.17]{Ke95}). Consider the Banach space $\rLUC(H)$ of bounded left uniformly continuous functions on $H$. Then, $\mu : f \mapsto \om(f \circ T)$ is a state on $\rLUC(H)$ that is invariant under left translation by $H_0$. By continuity, $\mu$ is invariant under left translation by $H$. So, $\rLUC(H)$ admits a left invariant mean. This implies that $H$ is amenable (see e.g.\ \cite[Theorem G.3.1]{BHV08}).
\end{proof}

\section{\boldmath An application to W$^*$-rigidity} \label{sec.rigidity}

Recall that an ergodic pmp action $\Gamma \actson (X,\mu)$ is called \emph{aperiodic} if all finite index subgroups of $\Gamma$ still act ergodically. Following \cite[Definition 1.8]{MS02} an ergodic pmp action $\Lambda \actson (Y,\eta)$ is called \emph{mildly mixing} if there are no nontrivial recurrent subsets: if $A \subset Y$ is measurable and $\liminf_{g \recht \infty} \eta(g \cdot A \, \triangle \, A) = 0$, then $\eta(A) = 0$ or $\eta(A)=1$. Note that for a mildly mixing action $\Lambda \actson (Y,\eta)$ all infinite subgroups of $\Lambda$ act ergodically on $(Y,\eta)$. Finally, a pmp action $\Gamma_1 \times \Gamma_2 \actson (X,\mu)$ of a product group is called \emph{irreducible} if both $\Gamma_1$ and $\Gamma_2$ act ergodically.

\begin{proof}[Proof of Theorem \ref{thm.MoSh}]
Since $\Gamma$ is a product of hyperbolic groups, Theorem \ref{thm.main-hyp} applies. So the existence of an isomorphism $\rL^\infty(X) \rtimes \Gamma \cong \rL^\infty(Y) \rtimes \Lambda$ implies that $\Gamma \actson (X,\mu)$ and $\Lambda \actson (Y,\eta)$ are orbit equivalent. Since non-elementary hyperbolic groups belong to the class $\cC_{\text{reg}}$ of Monod and Shalom, it follows from \cite[Theorem 1.10]{MS02} that the groups $\Gamma$ and $\Lambda$ must be isomorphic and that their actions must be conjugate.
\end{proof}

\end{document}